%****************************************************
%\documentclass{IOS-Book-Article}

\documentclass{ios-book-article}

%\include{dec_08}
%\pagespan{229}{238}

\usepackage{epsfig}
\usepackage{rotating}
\usepackage[english]{babel}
\usepackage{morefloats}
\usepackage{amsmath}
\usepackage{amsfonts}
\usepackage{amssymb}
\usepackage{amsthm}
\usepackage{graphicx}
\usepackage{newlfont}
\usepackage{textcomp}
\usepackage{latexsym}
\usepackage[all,cmtip]{xy}
\usepackage{layout}
\usepackage[usenames,dvipsnames]{color}
\usepackage{cite}
\usepackage{epstopdf}
\usepackage[overload]{textcase}      %add this
\usepackage{epsfig} %add these two
\usepackage{epic}
\usepackage{rotating}

%%%%%%%%%%%%%
\newtheorem{thm}{Theorem}

\newtheorem {lem}{Lemma}

\newtheorem {exa}{Example}

\newtheorem{alg}{Algorithm}
\newtheorem {rem}{Remark}
\newtheorem {cor}{Corollary}

\usepackage{xy} \input xy \xyoption{all}

\newcommand{\ch}[2]
{\begin{bmatrix}
 #1 \\
 #2\\
\end{bmatrix}}

\newcommand{\chr}[4]
{\begin{bmatrix}
 #1 & #2\\
 #3 & #4
\end{bmatrix}}

\newcommand{\chs}[6]
{\begin{bmatrix}
 #1 & #2 & #3\\
 #4 & #5 & #6
\end{bmatrix}}

%%%%%%%%%%%%%%%%%%%%%%
\def\N{\mathbb N}
\def\Z{\mathbb Z}
\def\Q{\mathbb Q}
\def\R{\mathbb R}
\def\C{\mathbb C}
\def\P{\mathbb P}

\def\H{\mathcal H}
\def\M{{\mathcal M}}
\def\A{\mathcal A}
\def\X{\mathcal X}%\def\X{\mathfrak X}

\def\L{\mathcal L}

\def\m{\mathfrak m}
\def\an{\mathfrak a}
\def\bn{\mathfrak b}
\def\en{\mathfrak e}
\def\hn{\mathfrak h}
\def\n{\mathfrak n}
\def\HS{\mathfrak H}

\def\det{\mbox{det }}

\def\J{\mbox{Jac }}
\def\bAut{\mbox{$\overline{Aut}$ }   }

\def\<{\langle}
\def\>{\rangle}
\def\normal{\triangleleft }

\def\a{\alpha}
\def\b{\beta}
\def\t{\tau}
\def\s{\sigma}

\def\T{\theta}
\def\d{{\delta }}

\def\O{\Omega}
\def\om{\omega}

\def\e{\eta}
\def\G{\Gamma}
\def\l{\lambda}
\def\ep{\epsilon}
\def\xs{{\Bbb o}}

\def\iso{{\, \cong\, }}

\def\sem{\rtimes}

\def\embd{\hookrightarrow}

\def\Aut{\mbox{Aut}}

\begin{document}
\begin{frontmatter}          % The preamble begins here.
%
%\pretitle{}
\title{Theta functions and algebraic curves with automorphisms}
\runningtitle{}
%\subtitle{}

% For one author:
\author{ \fnms{T. Shaska \and G.S. Wijesiri }   \snm{} }
%\address{Department of Mathematics and Statistics\\ Loyola University of Chicago}
%\runningauthor{Huffman}

\maketitle

\begin{abstract}
Let $\X$ be an irreducible, smooth, projective curve of genus $g \geq 2$ defined over the complex field $\C.$ Then
there is a covering $\pi: \X \longrightarrow \P^1,$ where $\P^1$ denotes the projective line. The problem of expressing
branch points of the covering $\pi$ in terms of the transcendentals (period matrix, thetanulls, e.g.) is classical. It
goes back to Riemann, Jacobi, Picard and Rosenhein. Many mathematicians, including Picard and Thomae,
% in the 19th century, Bershadski, Radul, Farkas and Nakayashiki in the 20th century
have offered partial treatments for this problem. In this work, we address the problem for cyclic curves of genus 2, 3,
and 4 and find relations among theta functions for curves with automorphisms. We consider curves of genus $g
> 1$ admitting an automorphism $\sigma$ such that $\X^\sigma$ has genus zero and $\sigma$ generates a normal subgroup of
the automorphism group $Aut(\X)$ of $\X$.

To characterize the locus of cyclic curves by analytic conditions on its Abelian coordinates, in other words, theta
functions, we use  some classical formulas, recent results of Hurwitz spaces, and symbolic computations, especially for
genera 2 and 3. For hyperelliptic curves, we use Thomae's formula to invert the period map and discover relations among
the classical thetanulls of cyclic curves. For non hyperelliptic curves, we write the equations in terms of thetanulls.

Fast genus 2 curve arithmetic in the Jacobian of the curve is used in cryptography and is based on inverting the moduli
map for genus 2 curves and on some other relations on theta functions.  We determine similar formulas and relations for
genus 3 hyperelliptic curves and offer an algorithm for how this can be done for higher genus curves. It is still to be
determined whether our formulas for $g=3$ can be used in cryptographic applications as in $g=2.$
\end{abstract}
\begin{keyword}
Theta functions, Riemann surfaces, theta-nulls, automorphisms.
\end{keyword}
\end{frontmatter}

%%%%%%%%%%%%%%%%%%%%%%%%%%%%%%%%%%%%%%%%%%%%%%%%%%%%%%%%%%%%%%%%%%%%%%%%%%%%%

\section{Introduction to Theta Functions of Curves}

Let $\X$ be an irreducible, smooth, projective curve of genus $g \geq 2$ defined over the complex field $\C.$ We denote
the moduli space of genus $g$ by $\M_g$ and  the hyperelliptic locus in $\M_g$ by $\H_g.$ It is well known that dim
$\M_g = 3g-3$ and $\H_g$ is a $(2g-1)$ dimensional subvariety of $\M_g.$  Choose a symplectic homology basis for $\X$,
say
\[ \{ A_1, \dots, A_g, B_1, \dots , B_g\}\]
such that the intersection products $A_i \cdot A_j = B_i \cdot B_j =0$ and $A_i \cdot B_j= \d_{i j}$.
We choose a basis $\{ w_i\}$ for the space of holomorphic 1-forms such that $\int_{A_i} w_j = \d_{i j},$ where $\d_{i
j}$ is the Kronecker delta. The matrix $\O= \left[ \int_{B_i} w_j \right] $ is  the \textbf{period matrix} of $\X$ .
The columns of the matrix $\left[ I \ | \O \right]$ form a lattice $L$ in  $\C^g$ and the Jacobian  of $\X$ is $\J(\X)
= \C^g/ L$. Let $$\HS_g =\{\t : \t \,\, \textit{is symmetric}\,\, g \times g \, \textit{matrix with positive definite
imaginary part} \}$$ be the \textbf{Siegel upper-half space}. Then $\O \in \HS_g$. The group of all $2g \times 2g$
matrices $M \in GL_{2g}(\Z)$ satisfying
\[M^t J M = J  \,\,\,\,\,\,\,\, \textit{with} \, \,\,\,\,\,\, J = \begin{pmatrix}  0 & I_g \\ -I_g & 0 \end{pmatrix} \]
is called the \textbf{symplectic group} and denoted  by $Sp_{2g}(\Z)$.
Let $M = \begin{pmatrix} R & S \\ T & U
\end{pmatrix} \in Sp_{2g}(\Z) $ and $\t \in \HS_g$ where $R,$ $S,$ $T$ and $U$ are $g \times g$ matrices. $Sp_{2g}(\Z) $ acts transitively on $\HS_g$ as

%\[ M(\t) = \frac{R \t + S}{T \t + U} \]
\[ M(\t) = (R \t + S)(T \t + U)^{-1}. \]

\noindent Here, the multiplications are matrix multiplications. There is an injection
\[ \M_g \embd \HS_g/ Sp_{2g}(\Z) =: \A_g \] where each curve $C$ (up to isomorphism) goes to its Jacobian in $\A_g.$
If $\ell$ is a positive integer, the principal congruence group of degree $g$ and of level $\ell$ is defined as a
subgroup of $Sp_{2g}(\Z)$ by the condition $M \equiv I_{2g} \mod \ell.$ We shall denote this group by
$Sp_{2g}(\Z)(\ell).$

For any $z \in \C^g$ and $\t \in \HS_g$ the \textbf{Riemann's theta function} is defined as
\[ \T (z , \t) = \sum_{u\in \Z^g} e^{\pi i ( u^t \t u + 2 u^t z )  }\]
where $u$ and $z$ are $g$-dimensional column vectors and the products involved in the formula are matrix products. The
fact that the imaginary part of $\t$ is positive makes the series absolutely convergent over every compact subset of
$\C^g \times \HS_g$.
%Therefore the function is analytic.
The theta function is holomorphic on $\C^g\times \HS_g$ and has quasi periodic properties,
$$\T(z+u,\tau)=\T(z,\tau)\quad \textit{and}\quad
\T(z+u\tau,\tau)=e^{-\pi i( u^t \tau u+2z^t u )}\cdot  \T(z,\tau)$$
where $u\in \Z^g$; see \cite{Mu1} for details.
The locus $ \Theta: = \{ z \in \C^g/L : \T(z, \O)=0 \}$ is called the \textbf{theta divisor} of $\X$.
Any point $e \in \J (\X)$ can be uniquely written  as $e = (b,a) \begin{pmatrix} 1_g \\ \O \end{pmatrix}$ where $a,b
\in \R^g$ are the characteristics of $e.$
We shall use  the notation $[e]$ for the characteristic of $e$ where $[e] = \ch{a}{b}.$ For any $a, b \in \Q^g$, the
theta function with rational characteristics is defined as a translate of Riemann's theta function multiplied by an
exponential factor

\begin{equation} \label{ThetaFunctionWithCharac} \T  \ch{a}{b} (z , \t) = e^{\pi i( a^t \t a + 2 a^t(z+b))} \T(z+\t a+b ,\t).\end{equation}

\noindent By writing out Eq.~\eqref{ThetaFunctionWithCharac}, we have
\[ \T  \ch{a}{b} (z , \t) = \sum_{u\in \Z^g} e^{\pi i ( (u+a)^t \t (u+a) + 2 (u+a)^t (z+b) )  }. \]
The Riemann's theta function is $\T \ch{0}{0}.$ The theta function with rational characteristics has  the following
properties:
\begin{equation}\label{periodicproperty}
\begin{split}
& \T \ch{a+n} {b+m} (z,\t) = e^{2\pi i a^t m}\T \ch {a} {b} (z,\t),\\
&\T \ch{a} {b} (z+m,\t) = e^{2\pi i a^t m}\T \ch {a} {b} (z,\t),\\
&\T \ch{a} {b} (z+\t m,\t) = e^{\pi i (-2b^t m -m^t \t m - 2m^t z)}\T \ch {a} {b} (z,\t)\\
\end{split}
\end{equation}
where $n,m \in \Z^n.$ All of these properties are immediately verified by writing them out.
 A scalar obtained by evaluating a theta function with characteristic at $z=0$ is called a \emph{theta constant} or
\emph{thetanulls}. When the entries of column vectors $a$ and $b$ are from the set $\{ 0,\frac{1}{2}\}$, then the
characteristics $ \ch {a}{b} $ are called the \emph{half-integer characteristics}. The corresponding theta functions
with rational characteristics are called \emph{theta characteristics}.
Points of order $n$ on $\J(\X)$ are called the $\frac 1 n$-\textbf{periods}. Any point $p$ of $\J(\X)$ can be written
as $p = \t \,a + b. $ If $\ch{a}{b}$ is a $\frac 1 n$-period, then $a,b \in (\frac{1}{n}\Z /\Z)^{g}.$ The $\frac 1
n$-period $p$ can be associated with an element of $H_1(\X,\Z / n\Z)$ as follows: Let $a = (a_1,\cdots,a_g)^t,$ and $b
= (b_1,\cdots,b_g)^t.$ Then
\[
\begin{split}
p & = \t a + b \\
           & = \big(\sum a_i \int_{B_i} \om_1,\cdots, \sum a_i \int_{B_i} \om_g \big)^t + \big(b_1 \int_{A_1} \om_1,\cdots,b_g
           \int_{A_g} \om_g \big) \\
\end{split}
\]
\[
\begin{split}
           & = \big(\sum (a_i \int_{B_i} \om_1 + b_i\int_{A_i} \om_1) ,\cdots, \sum (a_i \int_{B_i} \om_g + b_i\int_{A_i} \om_g) \big)^t\\
           & = \big( \int_C \om_1, \cdots, \int_C \om_g \big)^t
\end{split}
\]
where $C = \sum a_i B_i + b_i A_i. $ We identify the point $p$ with the cycle $\bar{C} \in H_1(\X,\Z / n\Z)$ where
$\bar{C} =\sum \bar{a_i} B_i + \bar{b_i} A_i,$  $\bar{a_i} = n a_i$ and $\bar{b_i} = n b_i$ for all $i.$
% You can see more details on \cite{Acola}.
%

\subsection{Half-Integer Characteristics and the G\"opel Group} In this section we study groups of
half-integer characteristics. Any half-integer characteristic $\m \in\frac{1}{2}\Z^{2g}/\Z^{2g}$ is given by
\[
\m = \frac{1}{2}m = \frac{1}{2}
\begin{pmatrix} m_1 & m_2 &  \cdots &  m_g \\ m_1^{\prime} & m_2^{\prime} & \cdots & m_g^{\prime}  \end{pmatrix},
\]
where $m_i, m_i^{\prime} \in \Z.$ For $\m = \ch{m ^\prime}{m^{\prime \prime}} \in \frac{1}{2}\Z^{2g}/\Z^{2g},$ we
define $e_*(\m) = (-1)^{4 (m^\prime)^t m^{\prime \prime}}.$ We say that $\m$ is an \emph{even} (resp. \emph{odd})
characteristic if $e_*(\m) = 1$ (resp. $e_*(\m) = -1$). For any curve of genus $g$, there are $2^{g-1}(2^g+1)$ (resp.,
$2^{g-1}(2^g-1)$ ) even theta functions (resp., odd theta functions). Let $\an$ be another half-integer characteristic.
We define

\[
 \m \, \an = \frac{1}{2} \begin{pmatrix} t_1 & t_2 &  \cdots &  t_g \\ t_1^{\prime} & t_2^{\prime} & \cdots &
t_g^{\prime}
\end{pmatrix}
%\end{split}
\]
where $t_i \equiv (m_i\, + a_i)  \mod 2$ and $t_i^{\prime} \equiv (m_i^{\prime}\, + a_i^{\prime} ) \mod 2.$

For the rest of the thesis we only consider  characteristics $\frac{1}{2}q$ in which each of the elements
$q_i,q_i^{\prime}$ is either 0 or 1. We use the following abbreviations:
\[
\begin{split}
&|\m| = \sum_{i=1}^g m_i m_i^{\prime},  \quad \quad \quad \quad \quad \quad \quad \quad \quad
|\m, \an| = \sum_{i=1}^g (m_i^{\prime} a_i - m_i a_i^{\prime}), \\
& |\m, \an, \bn| = |\an, \bn| + |\bn, \m| + |\m, \an|, \quad \quad {\m\choose \an} = e^{\pi i \sum_{j=1}^g m_j
a_j^{\prime}}.
\end{split}
\]
\indent The set of all half-integer characteristics forms a group $\G$ which has $2^{2g}$ elements. We say that two
half integer characteristics $\m$ and $\an$ are \emph{syzygetic} (resp., \emph{azygetic}) if $|\m, \an| \equiv 0 \mod
2$ (resp., $|\m, \an| \equiv 1 \mod 2$) and three half-integer characteristics $\m, \an$, and $\bn$ are syzygetic if
$|\m, \an, \bn| \equiv 0 \mod 2$.
A \emph{G\"opel group} $G$ is a group of $2^r$ half-integer characteristics where $r \leq g$ such that every two
characteristics are syzygetic. The elements of the group $G$ are formed by the sums of $r$ fundamental characteristics;
see \cite[pg. 489]{Baker} for details. Obviously, a G\"opel group of order $2^r$ is isomorphic to $C^r_2$. The proof of
the following lemma can be found on   \cite[pg.  490]{Baker}.
\begin{lem}The number of different G\"opel groups which have $2^r$ characteristics is
\[
\frac{(2^{2g}-1)(2^{2g-2}-1)\cdots(2^{2g-2r+2}-1)}{(2^r-1)(2^{r-1}-1)\cdots(2-1)}.
\]
\end{lem}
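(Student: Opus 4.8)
The plan is to translate the statement into symplectic linear algebra over $\mathbb{F}_2$ and then count isotropic subspaces. First I would observe that, under the addition $\m\,\an$ defined above, the set $\G$ of all half-integer characteristics is an $\mathbb{F}_2$-vector space of dimension $2g$, so $\G \cong \mathbb{F}_2^{2g}$. The expression $|\m,\an|$ is $\Z$-bilinear and descends to a bilinear form on $\G$ modulo $2$; since $|\m,\m| = \sum_i(m_i^{\prime}m_i - m_i m_i^{\prime}) = 0$ it is alternating, and evaluating it on the standard basis vectors (unit vectors in the top and bottom rows) shows it is the standard symplectic form, hence non-degenerate. Thus $(\G,|\,\cdot\,,\,\cdot\,|)$ is a symplectic space over $\mathbb{F}_2$, and two characteristics are syzygetic exactly when they are orthogonal.

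Next I would identify G\"opel groups with isotropic subspaces. Because a G\"opel group is generated by $r$ fundamental characteristics and is isomorphic to $C_2^r$, it is an $r$-dimensional $\mathbb{F}_2$-subspace $W \subseteq \G$. By bilinearity, all of its elements are pairwise syzygetic if and only if its generators are, i.e. if and only if $|\u,\v| \equiv 0 \bmod 2$ for all $\u,\v \in W$, which is precisely the condition that $W$ be totally isotropic. Counting G\"opel groups of order $2^r$ is therefore the same as counting totally isotropic $r$-dimensional subspaces of $\G$.

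To count these I would enumerate ordered bases by successive extension. Suppose a totally isotropic subspace $W$ of dimension $k$ has been constructed. A vector extends it to a totally isotropic subspace of dimension $k+1$ exactly when it lies in the orthogonal complement $W^\perp$ but not in $W$; non-degeneracy gives $\dim W^\perp = 2g-k$, and total isotropy gives $W \subseteq W^\perp$, so the number of admissible vectors is $2^{2g-k}-2^k$. Multiplying for $k=0,\dots,r-1$ produces $\prod_{k=0}^{r-1}(2^{2g-k}-2^k) = 2^{\binom{r}{2}}\prod_{k=0}^{r-1}(2^{2g-2k}-1)$ ordered isotropic bases. Dividing by the number $\prod_{k=0}^{r-1}(2^{r}-2^{k}) = 2^{\binom{r}{2}}\prod_{j=1}^{r}(2^{j}-1)$ of ordered bases of a single $r$-dimensional $\mathbb{F}_2$-space, the factors $2^{\binom{r}{2}}$ cancel and leave exactly the claimed quotient.

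The bilinearity and non-degeneracy checks are routine. The step most in need of care is the extension count: one must verify that the admissible vectors form exactly $W^\perp \setminus W$, using non-degeneracy to get $|W^\perp| = 2^{2g-k}$ and the total isotropy (together with the alternating property) to guarantee the genuine containment $W \subseteq W^\perp$, so that precisely $2^k$ vectors are removed at each stage. Pinning down this stage count, and then noticing the cancellation of the $2^{\binom{r}{2}}$ factors in the quotient, is the crux of the argument; the rest is formal.
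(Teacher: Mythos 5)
Your argument is correct. Note, however, that the paper does not actually prove this lemma: it simply cites Baker \cite[pg.\ 490]{Baker}, so there is no in-paper proof to compare against. Baker's classical argument is, in substance, the same count you give, phrased in nineteenth-century language: one chooses the $r$ fundamental characteristics successively, the $(k+1)$-st being any characteristic syzygetic to all previous ones and outside the group they generate (giving $2^{2g-k}-2^k$ choices), and then divides by the number of ways the same group can be generated, which is the number of ordered bases $\prod_{k=0}^{r-1}\left(2^{r}-2^{k}\right)$. Your reformulation in terms of the symplectic $\mathbb{F}_2$-space $\G\cong\mathbb{F}_2^{2g}$ with the form induced by $|\m,\an|$, identifying G\"opel groups with totally isotropic $r$-dimensional subspaces, is a clean modernization that makes the two delicate points transparent: the alternating property guarantees every characteristic is syzygetic with itself (so no condition is lost at the first step), and non-degeneracy plus $W\subseteq W^{\perp}$ gives exactly $2^{2g-k}-2^{k}$ admissible extensions at stage $k$. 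The cancellation of the $2^{\binom{r}{2}}$ factors then yields precisely the stated quotient, so your proof is complete and consistent with the source the paper relies on.
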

If $G$ is a G\"opel group with $2^r$ elements, it has $2^{2g-r}$ cosets. The cosets are called \emph{G\"opel systems}
and are denoted by $\an G$, $\an \in \G$. Any three characteristics of a G\"opel system are syzygetic. We can find a
set of characteristics called a basis of the G\"opel system which derives all its $2^r$ characteristics by taking only
combinations of any odd number of characteristics of the basis.
\begin{lem}
Let $g \geq 1$ be a fixed integer, $r$ be as defined above and $\sigma = g-r.$ Then there are
$2^{\sigma-1}(2^\sigma+1)$ G\"opel systems which only consist of even characteristics and there are
$2^{\sigma-1}(2^\sigma-1)$ G\"opel systems which consist of odd characteristics. The other $2^{2\sigma}(2^r-1)$ G\"opel
systems consist of as many odd characteristics as even characteristics.
\end{lem}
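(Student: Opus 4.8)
The plan is to recast the parity invariant $e_*$ as a quadratic form over $\mathbb{F}_2=\Z/2\Z$ and to exploit the isotropy built into the definition of a G\"opel group. First I would identify each half-integer characteristic $\m=\frac12\ch{m}{m'}$ with the vector $(m,m')\in\mathbb{F}_2^{2g}$, so that the product $\m\,\an$ becomes addition, the pairing $|\m,\an|$ becomes (the reduction modulo $2$ of) the standard symplectic form $\langle\cdot,\cdot\rangle$, and $|\m|$ becomes a quadratic form $q$ with $e_*(\m)=(-1)^{q(\m)}$. Expanding $|\m\,\an|=\sum_i(m_i+a_i)(m_i'+a_i')$ then gives the key identity
\[ e_*(\m\,\an)=e_*(\m)\,e_*(\an)\,(-1)^{|\m,\an|}, \]
equivalently $q(\m+\an)=q(\m)+q(\an)+\langle\m,\an\rangle$.

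Next I would restrict to a G\"opel group $G$ of order $2^r$. Since any two of its characteristics are syzygetic, $\langle\gamma,\delta\rangle\equiv0$ for all $\gamma,\delta\in G$, so the identity above shows that $q|_G$ is a homomorphism $G\to\mathbb{F}_2$. For a G\"opel system $\an G$ I would then compute the difference between its numbers of even and odd characteristics: writing a general element as $\an\gamma$,
\[ \sum_{\gamma\in G}e_*(\an\gamma)=(-1)^{q(\an)}\sum_{\gamma\in G}(-1)^{\,q(\gamma)+\langle\an,\gamma\rangle}. \]
The exponent $\psi_{\an}(\gamma):=q(\gamma)+\langle\an,\gamma\rangle$ is linear in $\gamma$, so the inner sum equals $2^r$ when $\psi_{\an}\equiv0$ and $0$ otherwise. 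Hence every G\"opel system is either all even, all odd, or split into equally many even and odd characteristics, and it is homogeneous exactly when $\langle\an,\gamma\rangle=q(\gamma)$ for all $\gamma\in G$.

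To count the homogeneous systems I would consider the linear map $L\colon\mathbb{F}_2^{2g}\to\mathrm{Hom}(G,\mathbb{F}_2)$ given by $v\mapsto\langle v,\cdot\rangle|_G$, whose kernel is $G^\perp$. Nondegeneracy of $\langle\cdot,\cdot\rangle$ forces $\dim G^\perp=2g-r$, so $L$ is surjective and the homogeneity condition $L(v)=q|_G$ carves out a single coset of $G^\perp$, of size $2^{2g-r}$. Because $G$ is isotropic we have $G\subseteq G^\perp$, so this solution set is a union of $2^{2g-2r}=2^{2\sigma}$ cosets of $G$; these are precisely the homogeneous G\"opel systems. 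Thus the number of all-even plus all-odd systems is $2^{2\sigma}$, leaving $2^{2\sigma+r}-2^{2\sigma}=2^{2\sigma}(2^r-1)$ mixed systems, which is the last assertion.

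Finally I would split the even from the odd homogeneous systems by a global count. Summing the signed counts over all $2^{2g-r}$ cosets reassembles $\sum_{v\in\mathbb{F}_2^{2g}}(-1)^{q(v)}$, which factors as $\prod_{i=1}^g\big(\sum_{x,y\in\{0,1\}}(-1)^{xy}\big)=2^g$; on the other hand only the homogeneous systems contribute, each an amount $\pm2^r$. Writing $A$ and $B$ for the numbers of all-even and all-odd systems, this gives $2^r(A-B)=2^g$, so $A-B=2^{\sigma}$, and together with $A+B=2^{2\sigma}$ we obtain $A=2^{\sigma-1}(2^\sigma+1)$ and $B=2^{\sigma-1}(2^\sigma-1)$. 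I expect the main obstacle to be the bookkeeping of the third step—passing correctly from the $2^{2g-r}$ solutions $v$ to the $2^{2\sigma}$ distinct $G$-cosets via the inclusion $G\subseteq G^\perp$—together with the verification that $q|_G$ is genuinely linear, on which the entire trichotomy depends.
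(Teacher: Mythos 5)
Your proof is correct, and it is worth noting that the paper itself offers no argument at all for this lemma: its ``proof'' is a pointer to Baker \cite[pg.\ 492]{Baker}, so your write-up is a genuine, self-contained replacement rather than a rediscovery of the paper's route. Each step checks out: the identity $e_*(\m\,\an)=e_*(\m)e_*(\an)(-1)^{|\m,\an|}$ follows from expanding $|\m\,\an|$ modulo $2$; isotropy of the G\"opel group $G$ then makes $q=|\cdot|$ restrict to a homomorphism $G\to\mathbb{F}_2$, which is exactly what powers the trichotomy (each coset $\an G$ is all even, all odd, or evenly split, according to whether the linear functional $\gamma\mapsto q(\gamma)+|\an,\gamma|$ vanishes on $G$); nondegeneracy of the symplectic pairing gives $\dim G^\perp=2g-r$, so the homogeneous cosets form one coset of $G^\perp$, i.e.\ exactly $2^{2\sigma}$ of the $2^{2\sigma+r}$ systems, yielding the count $2^{2\sigma}(2^r-1)$ of mixed ones; and the Gauss-sum evaluation $\sum_{v\in\mathbb{F}_2^{2g}}(-1)^{q(v)}=2^g$ combined with $A+B=2^{2\sigma}$, $2^r(A-B)=2^g$ gives $A=2^{\sigma-1}(2^\sigma+1)$ and $B=2^{\sigma-1}(2^\sigma-1)$. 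Two features of your argument deserve emphasis. First, it never assumes $G$ consists of even characteristics, so it covers all G\"opel groups (for $g=2$, $r=2$, only $6$ of the $15$ groups are all even), which is the generality the lemma actually claims. Second, your global count cleverly sidesteps any need to decide which individual homogeneous coset is even or odd. Baker's classical treatment reaches the same numbers by constructing fundamental systems of characteristics and counting case by case; your version, phrased as linear algebra and a quadratic form of Arf invariant one over $\mathbb{F}_2$, is shorter, checks the boundary case $\sigma=0$ (Corollary~\ref{numb_systems}) automatically, and makes transparent exactly where syzygy (isotropy) and nondegeneracy are used.
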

\proof The proof can be found on \cite[pg. 492]{Baker}. \qed
\begin{cor}\label{numb_systems}
When $r=g,$ we have only one (resp., 0) G\"opel system which consists of even (resp., odd) characteristics.
\end{cor}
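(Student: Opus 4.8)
The plan is to obtain Corollary~\ref{numb_systems} as an immediate specialization of the preceding Lemma, evaluating its counting formulas at the boundary case $\sigma = 0$. Since the hypothesis here is $r = g$, and the Lemma sets $\sigma = g - r$, the first step is simply to record that $r = g$ forces $\sigma = 0$. Everything then reduces to substituting $\sigma = 0$ into the two expressions supplied by the Lemma.

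Carrying this out, the number of G\"opel systems consisting only of even characteristics is
\[
2^{\sigma - 1}(2^\sigma + 1)\Big|_{\sigma = 0} = \tfrac{1}{2}\,(2^0 + 1) = \tfrac{1}{2}\cdot 2 = 1,
\]
while the number consisting only of odd characteristics is
\[
2^{\sigma - 1}(2^\sigma - 1)\Big|_{\sigma = 0} = \tfrac{1}{2}\,(2^0 - 1) = \tfrac{1}{2}\cdot 0 = 0.
\]
This yields exactly the claimed counts of one and zero, respectively, so no further argument is needed beyond the substitution.

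As a consistency check that the specialization is legitimate and accounts for all systems, I would verify the total. When $r = g$ the G\"opel group $G$ has $2^g$ elements, and by the discussion preceding the Lemma it has $2^{2g - r} = 2^g$ cosets, i.e.\ $2^g$ G\"opel systems in all. The Lemma's count of mixed systems evaluates to $2^{2\sigma}(2^r - 1)\big|_{\sigma = 0} = 2^g - 1$, and indeed $1 + 0 + (2^g - 1) = 2^g$, confirming that the one even system, the (nonexistent) odd system, and the $2^g - 1$ mixed systems exhaust every coset.

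The only conceptual subtlety worth flagging is the appearance of the fractional factor $2^{\sigma - 1} = \tfrac{1}{2}$ at $\sigma = 0$: one must check that it combines with $(2^\sigma \pm 1)$ to return a nonnegative integer, which it does precisely because $2^\sigma - 1$ is even (in fact zero) and $2^\sigma + 1$ is even at $\sigma = 0$. This is the sole place where the formula could superficially look ill-defined, and the total count above shows the specialization is internally coherent. Hence the corollary follows directly from the Lemma.

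\qed
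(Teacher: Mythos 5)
Your proof is correct and matches the paper's (implicit) argument: the corollary is stated as an immediate consequence of the preceding lemma, obtained exactly by setting $\sigma = g - r = 0$ in the counts $2^{\sigma-1}(2^\sigma+1)$ and $2^{\sigma-1}(2^\sigma-1)$, yielding $1$ and $0$ respectively. Your consistency check against the total number of cosets $2^{2g-r} = 2^g$ is a nice addition but not needed.
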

Let us consider $s=2^{2\sigma}$ G\"opel systems which have  distinct characters. Let us denote them by
\[\an_1 G,\an_2 G,\cdots,\an_s G.\] We have the following lemma.
\begin{lem}
It is possible to choose $2\sigma+1$ characteristics from $\an_1, \an_2,\cdots, \an_s,$  say $\bar{\an}_1,$
$\bar{\an}_2,$ $\cdots,$ $\bar{\an}_{2\sigma+1}$, such that every three of them are azygetic and all have the same
character. The above $2\sigma+1$ fundamental characteristics are even (resp., odd) if $\sigma \equiv 1,0 \mod 4$
(resp.,$\equiv 2,3 \mod 4$).
\end{lem}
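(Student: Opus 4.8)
My plan is to reduce the statement to linear algebra over $\Z/2\Z$ and then settle the parity by a mod-$4$ (Gauss-sum) computation. First I would pass to the reduced space. The $s=2^{2\sigma}$ Göpel systems $\an_1G,\dots,\an_sG$ of distinct character are exactly the cosets of $G$ inside its syzygetic complement $G^\perp$, so they are the points of the space $W=G^\perp/G$, of dimension $2\sigma$. For $\m,\an\in G^\perp$ the pairing $|\m,\an|\bmod 2$ is unchanged when $\m$ is altered by an element of $G$, so it descends to a nondegenerate alternating form $\langle\,,\rangle$ on $W$; likewise the parity $q(\m):=|\m|=\sum_i m_im_i'$ descends because $G$ is an even isotropic subgroup (this is consistent with the even/odd counts of Lemma~2, which are precisely those of a form of Arf invariant $0$). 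Writing out the definitions gives $q(\m\,\an)=q(\m)+q(\an)+\langle\m,\an\rangle$ and, for triples, $|\m,\an,\bn|\equiv q(\m)+q(\an)+q(\bn)+q(\m\,\an\,\bn)\pmod 2$; thus ``azygetic triple'' becomes the statement $q(\m\,\an\,\bn)\neq q(\m)+q(\an)+q(\bn)$, and $e_*(\m)=(-1)^{q(\m)}$.

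Next I would produce the system. It is enough to find $2\sigma+1$ representatives that are pairwise azygetic, $\langle\bar\an_i,\bar\an_j\rangle\equiv 1$, for then every three are azygetic since $1+1+1\equiv1$. Fixing one representative $\bar\an_1$ and setting $w_j=\bar\an_j\,\bar\an_1$, the pairwise condition is equivalent to the $w_j$ having pairwise product $1$; their Gram matrix is the off-diagonal all-ones matrix $J+I$ over $\Z/2\Z$, whose rank is $m$ for $m$ even and $m-1$ for $m$ odd. Since $\langle\,,\rangle$ is nondegenerate of rank $2\sigma$, this forces $m\le 2\sigma+1$, and the bound is met exactly when the $w_j$ form a basis of $W$; hence the maximal azygetic system has the asserted size $2\sigma+1$. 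To make the members share a character I would then choose $\bar\an_1$ to solve the linear system $\langle \bar\an_1,w_j\rangle=q(w_j)$: the expansion $q(w_j\,\bar\an_1)=q(w_j)+q(\bar\an_1)+\langle \bar\an_1,w_j\rangle$ shows this makes every $q(\bar\an_i)$ equal to the single value $\epsilon:=q(\bar\an_1)$. Nondegeneracy makes $\bar\an_1$ unique once the product-$1$ basis is fixed, and the residual freedom is the action of the orthogonal group $O(q)$, under which $\epsilon$ is invariant; so $\epsilon$ depends only on $(W,q)$, i.e. only on $\sigma$.

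Finally, the computation of $\epsilon$ is the heart of the matter and the step I expect to be hardest. Writing $\bar\an_1=\sum_j\lambda_jw_j$ and solving $\langle\bar\an_1,w_k\rangle=q(w_k)$ gives $\lambda_k=\bigl(\sum_j q(w_j)\bigr)+q(w_k)$, whence $\epsilon=q(\bar\an_1)=\sum_{\lambda_j=1}q(w_j)+\binom{t}{2}$ with $t=\#\{j:\lambda_j=1\}$. A purely mod-$2$ bookkeeping is insufficient here: it only reproduces the identity $q\bigl(\prod_i\bar\an_i\bigr)\equiv\epsilon+\sigma$ and never isolates $\epsilon$. The argument must therefore use the refinement of $q$ to a $\Z/4$-valued form $\tilde q$ together with the Gauss sum $\sum_{x\in W}i^{\tilde q(x)}=(\text{8th root})\cdot 2^\sigma$, equivalently the fact that $\mathrm{Arf}(q)=0$. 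Feeding the Arf constraint into the formula for $\epsilon$ pins the number of odd $w_j$ modulo $4$ and yields $\epsilon\equiv\binom{\sigma}{2}=\tfrac{\sigma(\sigma-1)}{2}\pmod 2$, which is $0$ precisely when $\sigma\equiv 0,1\pmod 4$ and $1$ when $\sigma\equiv 2,3\pmod 4$. This is exactly the even/odd dichotomy claimed, and extracting this mod-$4$ information, rather than merely mod $2$, is the main obstacle.
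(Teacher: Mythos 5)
The paper never actually proves this lemma --- it is quoted from Baker's book (the citations on either side of it point to \cite[pg. 492, 511]{Baker}), so your proposal has to stand on its own; and while your overall framework (descend to $W=G^\perp/G$ with its alternating form and the Arf-invariant-$0$ quadratic form $q$, then fix the parity by a mod-$4$/Gauss-sum count) is sound and your final answer $\epsilon\equiv\binom{\sigma}{2}\pmod 2$ is correct, there is a genuine error at the pivotal step. The reduction ``it is enough to find $2\sigma+1$ representatives that are \emph{pairwise} azygetic'' cannot be carried out: pairwise azygetic sets of constant parity do not exist in general. For $\sigma=1$ the only pairwise azygetic triple in the $2$-dimensional space $W$ is the set of its three nonzero vectors, exactly one of which is odd when the Arf invariant is $0$; the system the lemma asserts for $\sigma\equiv 1$ is all \emph{even} (it is the set of the three even vectors, e.g. $\{0,e,f\}$ for a hyperbolic basis), and it necessarily contains syzygetic pairs. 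Similarly for $\sigma=2$ the classical system consists of (five of) the six odd genus-$2$ characteristics, which are not pairwise azygetic. Your own construction exposes the clash: pairwise azygeticity forces $\langle\bar{\an}_1,w_j\rangle=1$, while you choose $\bar{\an}_1$ by $\langle\bar{\an}_1,w_j\rangle=q(w_j)$; these agree only if every $w_j$ is odd, which the Arf condition rules out precisely when $\sigma\equiv 1,2\pmod 4$. So the sentence ``the pairwise condition is equivalent to the $w_j$ having pairwise product $1$'' is a false equivalence, and the triple-azygetic property of your output is never actually verified.

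The construction itself is salvageable, because the lemma only demands the weaker triple condition. With $\bar{\an}_j=w_j\,\bar{\an}_1$ and $\langle\bar{\an}_1,w_j\rangle=q(w_j)$ one computes $\langle\bar{\an}_1,\bar{\an}_j\rangle=q(w_j)$ and $\langle\bar{\an}_j,\bar{\an}_k\rangle=1+q(w_j)+q(w_k)$, so triples through $\bar{\an}_1$ give $q(w_j)+q(w_k)+\bigl(1+q(w_j)+q(w_k)\bigr)\equiv 1$ and the remaining triples give $3+2\bigl(q(w_j)+q(w_k)+q(w_l)\bigr)\equiv 1$: every three are azygetic, and $q(\bar{\an}_j)=q(\bar{\an}_1)$ as you showed. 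Inserting this two-line check, and deleting the pairwise claim, repairs the argument. Two further corrections are needed in the last step: the Arf constraint does \emph{not} pin $N=\#\{j: q(w_j)=1\}$ to a single residue mod $4$ --- the Gauss sum gives $N\equiv\sigma$ or $\sigma-1\pmod 4$ --- though in both cases your formula $\epsilon=\sum_{\lambda_j=1}q(w_j)+\binom{t}{2}$ does evaluate to $\binom{\sigma}{2}\bmod 2$; and the $O(q)$-invariance remark is wrong as stated (different admissible bases are related by symplectic, not orthogonal, maps, and different values of $N$ genuinely occur), but it becomes unnecessary once all admissible $N$ are checked. Finally, note that $q$ descends to $W$ only when $G$ consists of even characteristics; for a G\"opel group containing odd ones the pure systems form a translate of $G^\perp/G$, and your setup needs the corresponding affine modification.
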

\noindent The proof of the following lemma can be found on \cite[pg. 511]{Baker}.
\begin{lem}
For any half-integer characteristics $\an$ and $\hn,$ we have the following:
\begin{equation}\label {Bakereq1}
\T^2[\an](z_1,\t) \T^2[\an \hn](z_2,\t) = \frac{1}{2^{g}} \sum_\en  e^{\pi i |\an \en|} { \hn \choose \an \en}
\T^2[\en](z_1,\t)\T^2[\en \hn](z_2,\t).
\end{equation}
\end{lem}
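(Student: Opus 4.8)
The plan is to pass to \emph{second-order} theta functions (theta functions with period matrix $2\t$) and then to recognize the sum over $\en$ on the right as a character sum on the finite group $\G\cong(\Z/2\Z)^{2g}$ that collapses by orthogonality. First I would expand each squared theta as a lattice sum and apply the classical doubling substitution. Writing $\m=\ch{m'}{m''}$, the product $\T[\m](z,\t)^2$ is a double sum over $u_1,u_2\in\Z^g$ in which each index is shifted by $m'$; the substitution $p=u_1+u_2$, $q=u_1-u_2$ (which forces $p\equiv q\bmod 2$) rewrites the quadratic exponent as $\tfrac12(p+2m')^t\t(p+2m')+\tfrac12 q^t\t q$ with linear part $(p+2m')^t(z+m'')$. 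Summing over the parity classes $\delta\in(\Z/2\Z)^g$ of $p$ then yields a doubling expansion
\[
\T^2[\m](z,\t)=\sum_{\delta\in(\Z/2\Z)^g}\T[\m_\delta](2z,2\t)\,\T[\m'_\delta](0,2\t),
\]
in which the second-order characteristics $\m_\delta,\m'_\delta$ depend $\Z/2\Z$-linearly on $\m$ and $\delta$, and the phases are controlled by the prefactor in \eqref{ThetaFunctionWithCharac}. I would record the analogous expansions of $\T^2[\an\hn](z_2)$, $\T^2[\en](z_1)$ and $\T^2[\en\hn](z_2)$.

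Next I would substitute these expansions into the right-hand side and interchange the finite sum over $\en\in\G$ with the two parity sums. Because the $z_1$- and $z_2$-dependence of every term is carried entirely by the second-order thetas $\T[\cdot](2z_1,2\t)$ and $\T[\cdot](2z_2,2\t)$, proving the identity reduces to comparing, for each pair of surviving second-order characteristics, the scalar coefficient of the product $\T[\cdots](2z_1,2\t)\,\T[\cdots](2z_2,2\t)$ on the two sides. The coefficient on the right is then a sum over $\en$ of the kernel $e^{\pi i|\an\en|}\,{\hn\choose\an\en}$ weighted by the thetanull factors $\T[\cdot](0,2\t)$ and by the sign factors produced by the group law $\en\mapsto\en\hn$ and by reduction of $\en\hn$ modulo $\Z$.

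The heart of the argument is that $\en\mapsto e^{\pi i|\an\en|}\,{\hn\choose\an\en}$ behaves as a quadratic character on $\G\cong(\Z/2\Z)^{2g}$, so the inner sum over $\en$ is governed by orthogonality of characters: all contributions whose second-order characteristic differs from the one appearing on the left should cancel, leaving exactly the terms that reconstitute $\T^2[\an](z_1)\T^2[\an\hn](z_2)$. The count $2^{2g}$ of characteristics against the $2^g$ terms of each doubling expansion is what produces the normalizing constant $\tfrac{1}{2^g}$, so that no overall factor is left over.

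The hard part will be bookkeeping rather than concept: one must track precisely the exponential prefactors in \eqref{ThetaFunctionWithCharac}, the reduction of characteristics modulo $\Z$ under the group law $\an\en$, and the interplay of the sign conventions $|\m|$, $|\m,\an|$, $e_*(\cdot)$ and ${\cdot\choose\cdot}$, so that the character sum collapses with the correct sign and the half-integer shifts in $\m_\delta,\m'_\delta$ combine consistently on both sides. Verifying that the surviving second-order characteristic matches and that no spurious phase survives is where the computation must be carried out with care; the periodicity relations \eqref{periodicproperty} are the tool that keeps every characteristic in canonical form throughout this reduction.
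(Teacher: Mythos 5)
The paper does not actually prove this lemma: it is stated with the remark that ``the proof can be found on \cite[pg.~511]{Baker}'', so there is no internal argument to compare yours against. Your plan is the standard modern proof of such quadratic theta relations, and it does go through. Concretely, the doubling step you describe yields
\begin{equation*}
\T^2\ch{a}{b}(z,\t)\;=\;\sum_{\delta\in\frac12\Z^g/\Z^g}\T\ch{a+\delta}{2b}(2z,2\t)\,\T\ch{\delta}{0}(0,2\t),
\end{equation*}
and writing $\an=\ch{A}{B}$, $\hn=\ch{H}{K}$, $\en=\ch{\alpha}{\beta}$ one computes the kernel to be $e^{\pi i|\an\en|}{\hn\choose\an\en}=e^{4\pi i(A+\alpha+H)^t(B+\beta)}$; substituting the doubling expansions of $\T^2[\en](z_1)$ and $\T^2[\en\hn](z_2)$, the sum over the bottom half $\beta$ of $\en$ becomes a linear character sum that vanishes unless $\alpha=A+\delta_1+\delta_2$, producing the factor $2^g$ that cancels the normalization $\tfrac1{2^g}$; the surviving double sum over $(\delta_1,\delta_2)$ then factors exactly into the doubling expansions of $\T^2[\an](z_1)$ and $\T^2[\an\hn](z_2)$, with all residual signs of the form $e^{8\pi i(\cdot)}$ equal to $1$. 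Two points of precision you should repair when writing this up. First, the kernel is \emph{not} itself a character on $\G$: it carries the quadratic term $e^{4\pi i\,\alpha^t\beta}$. Orthogonality applies only after this quadratic piece cancels against the quadratic cross-terms $e^{4\pi i\,\alpha^t\beta}$ coming from the doubling phases of $\T^2[\en](z_1)$ and $\T^2[\en\hn](z_2)$ (three such terms combine, and $3\alpha^t\beta\equiv\alpha^t\beta$ modulo the relevant integrality), leaving a genuinely linear exponent in $\beta$; so ``quadratic character plus orthogonality'' is the right picture but the cancellation must be exhibited, not assumed. Second, a small slip: the linear part after your substitution is $2(p+2m')^t(z+m'')$, not $(p+2m')^t(z+m'')$; the factor $2$ is what makes the re-summed series a theta function in $(2z,2\t)$. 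Finally, note that Baker's own classical derivation rests on the same lattice re-summation (his addition formula), so your route is in essence a modern rendering of the proof the paper cites rather than a genuinely different one.
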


We can use this relation to get identities among half-integer theta constants. Here $\en$ can be any half-integer
characteristic. We know that we have $2^{g-1}(2^g+1)$ even characteristics. As the genus increases, we have multiple
choices for $\en.$ In the following, we explain how we reduce the number of possibilities for $\en$ and how to get
identities among theta constants.

First we replace $\en$ by $\en \hn$ and $z_1=z_2= 0$ in Eq.~\eqref{Bakereq1}. Eq.~\eqref{Bakereq1} can then be written
as follows:
\begin{equation}\label {Bakereq2}
\T^2[\an] \T^2[\an \hn] = 2^{-g} \sum_\en  e^{\pi i |\an \en \hn|} { \hn \choose \an \en \hn} \T^2[\en] \T^2[\en \hn].
\end{equation}
We have $e^{\pi i |\an \en \hn|}{ \hn \choose \an \en \hn} = e^{\pi i |\an \en|}{ \hn \choose \an \en} e^{\pi i |\an
\en, \hn|}.$ Next we put $z_1=z_2= 0$ in Eq.~\eqref{Bakereq1} and add it to Eq.~\eqref{Bakereq2} and get the following
identity:
\begin{equation}\label {Bakereq3}
2\T^2[\an] \T^2[\an \hn] = 2^{-g} \sum_\en  e^{\pi i |\an \en|} (1 + e^{\pi i|\an \en, \hn|}) \T^2[\en] \T^2[\en \hn].
\end{equation}
If $|\an \en, \hn| \equiv  1 \mod 2$, the corresponding terms in the summation vanish. Otherwise $1 + e^{\pi i|\an \en,
\hn|} = 2.$ In this case, if either $\en$ is odd or $\en \hn$ is odd, the corresponding terms in the summation vanish
again. Therefore, we need $|\an \en, \hn| \equiv 0 \mod 2$ and $|\en| \equiv |\en \hn| \equiv 0 \mod 2,$ in order to
get nonzero terms in the summation. If $\en^*$ satisfies $|\en^*| \equiv |\en^* \hn^*| \equiv 0 \mod 2$ for some
$\hn^*,$ then $\en^*\hn^*$ is also a candidate for the left hand side of the summation. Only one of such two values
$\en^*$ and $\en^* \hn^*$ is taken. As  a result, we have the following identity among theta constants
\begin{equation}\label {eq1}
\T^2[\an] \T^2[\an \hn] = \frac{1}{2^{g-1}} \sum_\en  e^{\pi i |\an \en|} { \hn \choose \an \en} \T^2[\en]\T^2[\en
\hn],
\end{equation}
where $\an, \hn$ are any characteristics and $\en$ is a characteristics such that $|\an \en, \hn| \equiv 0 \mod 2,$
$|\en| \equiv |\en \hn| \equiv 0 \mod 2$ and $\en \neq \en \hn.$

By starting from the Eq.~\eqref{Bakereq1} with $z_1 = z_2$ and following a similar argument to the one above, we can
derive the identity,
\begin{equation}\label{eq2}
\T^4[\an] + e^{\pi i |\an, \hn|} \T^4[\an \hn] = \frac{1}{2^{g-1}} \sum_\en  e^{\pi i |\an \en|} \{ \T^4[\en] + e^{ \pi
i |\an, \hn|} \T^4[\en \hn]\}
\end{equation}
where $\an, \hn$ are any characteristics and $\en$ is a characteristic such that $|\hn| + |\en, \hn| \equiv 0 \mod 2,$
$|\en| \equiv |\en \hn| \equiv 0 \mod 2$ and $\en \neq \en \hn.$

\begin{rem}
$|\an \en ,\hn| \equiv 0 \mod 2$ and $|\en \hn| \equiv |\en| \equiv 0 \mod 2$ implies $|\an, \hn| + |\hn| \equiv 0 \mod
2.$
\end{rem}
We use Eq.~\eqref{eq1} and Eq.~\eqref{eq2} to get identities among thetanulls in Chapter 2 and in Chapter 3.
\subsection{Hyperelliptic Curves and Their Theta Functions}
A hyperelliptic curve $\X,$ defined over $\C,$ is a cover of order two of the projective line $\P^1.$ Let $z$ be the
generator (the hyperelliptic involution) of the Galois group $Gal(\X / \P^1).$ It is known that $\langle z \rangle $ is
a normal subgroup of the $\Aut(\X)$ and $z$ is in the center of $\Aut(\X)$. A hyperelliptic curve is ramified in $(2g
+2)$ places $w_1, \cdots, w_{2g+2}.$ This sets up a bijection between isomorphism classes of hyperelliptic genus g
curves and unordered distinct (2g+2)-tuples $w_1,\cdots,w_{2g+2} \in \P^1$ modulo automorphisms of $\P^1.$ An unordered
$(2g+2)$-tuple $\{w_i\}_{i=1}^{2g+2}$ can be described by a binary form (i.e. a homogenous equation $f(X,Z)$ of degree
$2g+2$). To describe $\H_g,$ we need rational functions of the coefficients of a binary form $f(X,Z),$ invariant under
linear substitutions in X and Z. Such functions are called absolute invariants for $g = 2$; see \cite{S7} for their
definitions. The absolute invariants are $GL_2(\C)$ invariants under the natural action of $GL_2(\C)$ on the space of
binary forms of degree $2g + 2.$ Two genus $g$ hyperelliptic curves are isomorphic if and only if they have the same
absolute invariants. The locus of genus $g$ hyperelliptic curves with an extra involution is an irreducible
$g$-dimensional subvariety of $\H_g$ which is denoted by $\L_g.$ Finding an explicit description of $\L_g$ means
finding explicit equations in terms of absolute invariants. Such equations are computed only for $g = 2;$ see \cite{S7}
for details. Writing the equations of $\L_2$ in terms of theta constants is the main focus of Chapter 2. Computing
similar equations for $g \geq 3$ requires first finding the corresponding absolute invariants. This is still an open
problem in classical invariant theory even for $g = 3.$

 Let $\X \longrightarrow \P^1$ be the degree 2 hyperelliptic projection.
We can assume that $\infty$ is a branch point.

Let
\[ B := \{\a_1,\a_2, \cdots ,\a_{2g+1} \}\]
be the set of other branch points.  Let $S = \{1,2, \cdots, 2g+1\}$ be the index set of $B$ and $\e : S \longrightarrow
\frac{1}{2}\Z^{2g}/\Z^{2g}$  be a map defined as follows:
%half-integer characteristic for each $j \in S$ as follows:
%\begin{small}
%
\[
\begin{split}
\e(2i-1) & = \begin{bmatrix}
              0 & \cdots & 0 & \frac{1}{2} & 0 & \cdots & 0\\
              \frac{1}{2} & \cdots & \frac{1}{2} & 0 & 0 & \cdots & 0\\
            \end{bmatrix}, \\
 \e(2i) & =\begin{bmatrix}
              0 & \cdots & 0 & \frac{1}{2} & 0 & \cdots & 0\\
              \frac{1}{2} & \cdots & \frac{1}{2} & \frac{1}{2} & 0 & \cdots & 0\\
            \end{bmatrix}
\end{split}
\]
%\end{small}
%
where the nonzero element of the first row appears in $i^{th}$ column.
We define  $\e(\infty) $ to be $
\begin{bmatrix}
              0 & \cdots & 0 & 0\\
              0 & \cdots & 0 & 0\\
            \end{bmatrix}$.
For any $T \subset B $, we define the half-integer characteristic as

\[ \e_T = \sum_{a_k \in T } \e(k) .\]
Let $T^c$ denote the complement of $T$ in $B.$ Note that $\e_B \in \Z^{2g}.$ If we view $\e_T$ as an element of
$\frac{1}{2}\Z^{2g}/\Z^{2g}$ then $\e_T= \e_{T^c}.$ Let $\triangle$ denote the symmetric difference of sets, that is $T
\triangle R = (T \cup R) - (T \cap R).$ It can be shown that the set of subsets of $B$ is a group under $\triangle.$ We
have the following group isomorphism:
\[ \{T \subset B\,  |\, \#T \equiv g+1 \mod 2\} / T \sim T^c \cong \frac{1}{2}\Z^{2g}/\Z^{2g}.\]
For $\gamma = \ch{\gamma ^\prime}{\gamma^{\prime \prime}} \in \frac{1}{2}\Z^{2g}/\Z^{2g}$, we have
\begin{equation}\label{parityIdentity}
 \T [\gamma] (-z , \t) = e_* (\gamma) \T [\gamma] (z , \t).\end{equation}
It is known that for hyperelliptic curves, $2^{g-1}(2^g+1) - {2g+1 \choose g}$ of the even theta constants are zero.
The following theorem provides a condition for the characteristics in which theta characteristics become zero. The
proof of the theorem can be found  in \cite{Mu2}.
\begin{thm}\label{vanishingProperty}
Let $\X$ be a hyperelliptic curve, with a set $B$ of branch points. Let $S$ be the index set as above and $U $ be the
set of all odd values of $S$. Then for all $T \subset S$ with even cardinality, we have $ \T[\e_T] = 0$  if and only if
$\#(T \triangle U) \neq g+1$, where $\T[\e_T]$ is the theta constant corresponding to the characteristics $\e_T$.
\end{thm}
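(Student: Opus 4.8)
The plan is to convert the vanishing of the thetanull $\T[\e_T](0)$ into a statement about special divisors through Riemann's vanishing theorem, and then to settle that statement by an explicit cohomology computation which a hyperelliptic curve makes completely elementary. Throughout I fix the base point of the Abel--Jacobi map at the branch point $\infty$. Since each finite branch point $p_i$ (lying over $\a_i$) satisfies $2p_i\sim 2\infty$, the class of $p_i-\infty$ is a point of order two in $\J(\X)$, and the map $\e$ of the excerpt is precisely the encoding of the half-period attached to $\sum_{i\in T}(p_i-\infty)$ by the characteristic $\e_T$. Writing $e_T\in\J(\X)$ for this half-period, the relations \eqref{periodicproperty} give $\T[\e_T](0)=c\cdot\T(e_T,\O)$ with $c\neq 0$, so it suffices to decide when $\T(e_T,\O)=0$.

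Next I would invoke Riemann's vanishing theorem in its sharp (Riemann--Kempf) form: with $\kappa$ the Riemann constant for the chosen base point, $\T$ vanishes at $e_T$ to order exactly $h^0(\L_T)$, where $\L_T$ is the degree $g-1$ line bundle whose class is $e_T+\kappa$; in particular $\T[\e_T](0)=0$ if and only if $h^0(\L_T)>0$. The essential point, and the place where the set $U$ of odd indices enters, is the identification of the Riemann constant: for this homology basis and this normalization of $\e$ one has $\kappa=e_U$, so that under the group law on characteristics (symmetric difference of index sets) $\L_T$ becomes the theta characteristic $\L_R$ attached to the branch-point set $R=T\triangle U$. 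Concretely $\L_R=\mathcal{O}\big(\sum_{i\in R}p_i-(|R|-(g-1))\,\infty\big)$, and using $2p_i\sim 2\infty$ together with $K_\X\sim(2g-2)\infty$ one checks $\L_R^{\otimes 2}\cong K_\X$.

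It remains to compute $h^0(\L_R)$, and here I would exploit that the hyperelliptic involution fixes every branch point, so a set of distinct branch points contains no conjugate pair and hence imposes independent conditions. After replacing $R$ by its complement in $B$ so that $|R|\le g+1$, one expands a prospective section $f=A(x)+B(x)y$: clearing the forced simple poles at the $p_i$ shows $A$ is a polynomial, while the parity of the pole order of $B(x)y$ at $\infty$ is odd and hence incompatible with the even coefficient of $\infty$ in the divisor, forcing $B=0$. The surviving even part is spanned by $1,x,\dots,x^{\frac{(g-1)-|R|}{2}}$, giving $h^0(\L_R)=\tfrac12\big((g+1)-|R|\big)$. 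Thus $h^0(\L_R)=0$ precisely when $|R|=g+1$ and $h^0(\L_R)\ge 1$ otherwise. Combining with the previous step yields $\T[\e_T](0)=0$ if and only if $\#(T\triangle U)\neq g+1$, which is the assertion; as a consistency check, the number of even-cardinality $T$ with $\#(T\triangle U)=g+1$ is $\binom{2g+1}{g+1}=\binom{2g+1}{g}$, which is exactly the complement within the $2^{g-1}(2^g+1)$ even thetanulls of the stated count $2^{g-1}(2^g+1)-\binom{2g+1}{g}$ of vanishing ones.

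The step I expect to be the main obstacle is the precise identification $\kappa=e_U$ of the Riemann constant with the odd branch set, since it is this fact that makes $T\triangle U$, rather than $T$ itself, the governing combinatorial datum; carrying it out cleanly requires matching the chosen symplectic basis and the base point $\infty$ to the conventions encoded in the map $\e$. Once that identification is in place the remaining $h^0$ computation is routine, being driven entirely by the observation that the branch points form the fixed locus of the involution and so behave as a rigid, linearly general configuration.
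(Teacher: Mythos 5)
The paper does not actually prove this theorem; it states it and refers to Mumford's Tata Lectures (\cite{Mu2}), so the only available comparison is with that cited argument. Your route is precisely the standard one that Mumford carries out: reduce $\T[\e_T](0)=0$ to $\T(e_T,\O)=0$ via the definition of theta with characteristics, apply Riemann's vanishing theorem in its Riemann--Kempf form to convert this into $h^0(\L_{T\triangle U})>0$, and then compute $h^0$ of theta characteristics on a hyperelliptic curve explicitly. Your $h^0$ computation is correct: the Weierstrass points are distinct and fixed by the involution, the decomposition $f=A(x)+B(x)y$ into even and odd parts cannot cancel at $\infty$ because the two pole orders have opposite parity, and this yields $h^0=\tfrac12\bigl((g+1)-|R|\bigr)$ for the reduced representative $R$, hence vanishing exactly when $\#(T\triangle U)\neq g+1$. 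The counting consistency check against $2^{g-1}(2^g+1)-\binom{2g+1}{g}$ is also right.

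The substantive gap --- which, to your credit, you flag yourself --- is the identification $\kappa=e_U$ of the Riemann constant with the half-period attached to the odd-index branch points. This is not a routine normalization check: it is the technical heart of the cited proof (Mumford devotes the bulk of the argument to showing $\phi\bigl(\sum_{i\in U}p_i\bigr)+\kappa\equiv 0$ for the specific symplectic basis underlying the map $\e$), and without it nothing in the problem singles out $U$, hence nothing produces $T\triangle U$ as the governing combinatorial datum. As written, your proposal is a correct blueprint resting on an unproven key lemma rather than a complete proof. There is also a small bookkeeping slip: when you ``replace $R$ by its complement,'' the complement must be taken inside the full branch set $B\cup\{\infty\}$ (equivalently, toggling $\infty$ in and out of $R$), not inside $B$ alone; complementation within the $2g+1$ finite branch points flips the parity of $|R|$ and would make your $h^0$ formula non-integral.
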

When the characteristic $\gamma$ is odd, $e_* (\gamma)=1.$ Then from Eq.~\eqref{parityIdentity} all odd theta constants
are zero. There is a formula which satisfies half-integer theta characteristics for hyperelliptic curves called
Frobenius' theta formula.

\begin{lem} [Frobenius]
For all $z_i \in \C^g$, $1\leq i \leq 4$ such that $z_1 + z_2 + z_3 + z_4 = 0$ and for all $b_i \in \Q^{2g}$, $1\leq i
\leq 4$ such that $b_1 + b_2 + b_3 + b_4 = 0$, we have
\[ \sum_{j \in S \cup \{\infty\}} \epsilon_U(j) \prod_{i =1}^4 \T[b_i+\e(j)](z_i) = 0, \]
where for any $A \subset B$,
\[
\epsilon_A(k) =
          \begin{cases}
           1 & \textit {if $k \in A$}, \\
           -1 & \textit {otherwise}.
          \end{cases}
\]
\end{lem}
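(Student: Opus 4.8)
The plan is to recognise the left-hand side as a section of a single holomorphic line bundle and then force that section to vanish. Fix the characteristics $b_i\in\Q^{2g}$ with $b_1+b_2+b_3+b_4=0$ and the arguments $z_i\in\C^g$ with $z_1+z_2+z_3+z_4=0$, and write $F=\sum_{j\in S\cup\{\infty\}}\epsilon_U(j)\prod_{i=1}^4\T[b_i+\e(j)](z_i)$. First I would determine how a single summand transforms under the sublattice $L'\subset L^4$ consisting of the shifts $(\t m_i+n_i)_{i=1}^4$ with $\sum_i m_i=0$ and $\sum_i n_i=0$; these are exactly the lattice translations compatible with the constraint $\sum_i z_i=0$. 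Reading the automorphy factors off the quasi-periodicity relations \eqref{periodicproperty} term by term, the characteristic $b_i+\e(j)$ feeds into the factor through $\e(j)$ only in the combination $\exp\bigl(2\pi i\,\e(j)^{t}\sum_i n_i\bigr)$ and $\exp\bigl(-2\pi i\,\e(j)^{t}\sum_i m_i\bigr)$, each of which is $1$ since the shifts sum to zero. Hence the $j$-dependence of the automorphy factor cancels, every product $\prod_i\T[b_i+\e(j)](z_i)$ satisfies the \emph{same} transformation law, and $F$ is a holomorphic section of one fixed line bundle $\mathcal L$ on the abelian variety $A=\{(z_i):\sum_i z_i=0\}/L'$. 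Proving the lemma then reduces to showing this section is identically zero.

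To establish $F\equiv 0$ I would compare $F$ with the classical Riemann theta relation (see \cite{Mu1}), the fundamental quartic identity that rewrites a product $\prod_i\T[c_i](z_i)$ as $2^{-g}$ times a sum, over half-integer characteristics, of products of theta values at the half-sum arguments $w_k=\tfrac12(z_1\pm z_2\pm z_3\pm z_4)$. Specialising that identity to characteristics and arguments summing to zero yields a sum over all half-integer characteristics, and the crucial observation is that for a hyperelliptic curve almost every resulting theta constant is forced to vanish. Theorem \ref{vanishingProperty} is the decisive tool here: a thetanull $\T[\e_T](0)$ survives only when $\#(T\triangle U)=g+1$, so only the characteristics $\e(j)$ attached to the $2g+2$ branch points can contribute, matching the $2g+2$ summands of $F$ one by one. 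As a partial consistency check one can also specialise directly in $F$, for instance taking $z_1=-z_2$ and $z_3=z_4=0$ and using the parity relation \eqref{parityIdentity} to convert $\T[b_2+\e(j)](-z)$ into $e_*(b_2+\e(j))\,\T[b_2+\e(j)](z)$; Theorem \ref{vanishingProperty} again annihilates all but the branch-point terms.

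The genuine obstacle is the sign bookkeeping, namely proving that after the vanishing theorem the accumulated automorphy and parity phases collapse to precisely $\epsilon_U(j)=\pm1$ according to whether $j\in U$, with no spurious global constant. Two ingredients must be tracked carefully. The parity relation \eqref{parityIdentity}, $\T[\gamma](-z)=e_*(\gamma)\,\T[\gamma](z)$, controls how each even or odd characteristic responds to the sign changes $z_i\mapsto-z_i$ buried in the arguments $w_k$, while the syzygy pairing $|\m,\an|$ and the group law on the $\e_T$ under symmetric difference dictate how the phase factors of the form ${\hn\choose\an}$ recombine. Since $\e(2i-1)$ and $\e(2i)$ differ in a single entry and $\e_U$ is the sum of the odd-index characteristics, I expect the parity $e_*$ of $b_i+\e(j)$ to flip exactly according to membership of $j$ in $U$, which is what produces $\epsilon_U(j)$; pinning this normalisation down cleanly, using the explicit definitions of the map $\e$ and of $U$ rather than only their formal properties, is the step I anticipate will demand the most care.
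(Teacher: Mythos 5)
The paper gives no argument for this lemma at all --- its ``proof'' is the single citation to \cite[pg.~107]{Mu1} --- so the comparison can only be with the standard argument you are implicitly reconstructing, and there your outline assembles the right ingredients but omits the step that constitutes the entire content of the identity. Your first step is sound: on the locus $\sum_i z_i=0$, modulo shifts $(\t m_i+n_i)$ with $\sum_i m_i=\sum_i n_i=0$, the factors from Eq.~\eqref{periodicproperty} are indeed independent of $j$, so all $2g+2$ products are sections of one line bundle. The trouble starts at the next step. When you apply Riemann's quartic relation to each product $\prod_i\T[b_i+\e(j)](z_i)$, the half-sum argument $\frac12\sum_i z_i=0$ puts a genuine thetanull $\T[\alpha](0)$ in every term (here one uses $\sum_i b_i=0$ and $2\e(j)\in\Z^{2g}$), and Theorem~\ref{vanishingProperty} plus the parity relation \eqref{parityIdentity} kill all $\alpha=\e_T$ except those with $\#(T\triangle U)=g+1$. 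But these survivors are \emph{not} ``the characteristics $\e(j)$ attached to the $2g+2$ branch points, matching the $2g+2$ summands of $F$ one by one'': they are the $\e_T$ with $\#(T\triangle U)=g+1$, of which there are ${2g+1 \choose g}$ (ten for $g=2$, thirty-five for $g=3$), and no single-point characteristic is among them (for $T=\{j\}$ one gets $\#(T\triangle U)=g$ or $g+2$; indeed each $\e(j)$ is an odd characteristic, so its thetanull vanishes outright). So there is no one-to-one matching, and the lemma does not follow from the vanishing theorem by any counting of terms.

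What actually must be proved --- and is nowhere in your proposal --- is that for \emph{each} surviving characteristic $\e_T$, its total coefficient $\sum_{j\in S\cup\{\infty\}}\epsilon_U(j)\,c(j,\e_T)$ vanishes, where $c(j,\e_T)=\pm1$ collects the Riemann-relation sign together with the phase produced (via the first line of Eq.~\eqref{periodicproperty}) by the integer shift $2\e(j)$ of the first characteristic. This is exactly where the hypothesis $\#(T\triangle U)=g+1$ enters: using the pairing identity for the hyperelliptic $\e$-map (the analogue of $e_2(\e_{T_1},\e_{T_2})=(-1)^{\#(T_1\cap T_2)}$), one shows $\epsilon_U(j)\,c(j,\e_T)=\pm\epsilon_{T\triangle U}(j)$, so the coefficient equals $\pm\big(\#(T\triangle U)-(2g+2-\#(T\triangle U))\big)$, which is zero precisely because $g+1$ is half of $2g+2$. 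You have relegated precisely this computation to ``sign bookkeeping \dots\ the step I anticipate will demand the most care,'' but without it nothing is proved: the identity to be established \emph{is} that bookkeeping, and your heuristic for it (parities of $b_i+\e(j)$ flipping with membership $j\in U$) is not the mechanism. Your proposed consistency check suffers from the same confusion, plus a separate flaw: with $z_3=z_4=0$ the factors $\T[b_3+\e(j)](0)$ and $\T[b_4+\e(j)](0)$ are subject to Theorem~\ref{vanishingProperty} only when $b_3,b_4$ are half-integer, whereas the lemma allows arbitrary $b_i\in\Q^{2g}$.
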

\proof See \cite[pg.  107]{Mu1}. \qed

A relationship between theta constants and the branch points of the hyperelliptic curve is given by Thomae's formula.
\begin{lem}[Thomae]\label{Thomae}
%For a non singular even half integer characteristics $e$ corresponding to the partition of the branch points
%$\{1,2,\cdots,2(g+1)\} = \{i_1<i_2<\cdots<i_{g+1} \} \cup \{j_1<j_2<\cdots<j_{g+1} \},$ we have
%
%\[\T[e](0;\t)^8 = A \, \prod_{k<l} (\l_{i_k} - \l_{i_l})^2 (\l_{j_k} - \l_{j_l})^2. \]
For all sets of branch points $B=\{\a_1,\a_2, \cdots ,\a_{2g+1} \},$ there is a constant $A$ such that for all $T\subset B,$ $\# T$ is even, \\
\[\T[\eta_T](0;\t)^4 =(-1)^{\#T \cap U} A \prod_{i<j \atop i,j \in T \triangle U} (\a_{i} - \a_{j}) \prod_{i<j \atop i,j \notin T \triangle U} (\a_{i} -
\a_{j})  \] \\
\noindent where $\eta_T$ is a non singular even half-integer characteristic corresponding to the subset $T$ of branch
points.

\end{lem}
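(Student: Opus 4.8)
The plan is to prove the two sides agree up to a single multiplicative constant by showing that, viewed as functions of the branch points $\a_1,\dots,\a_{2g+1}$, they have identical logarithmic derivatives; the constant is then fixed by a normalization and shown to be independent of $T$. First I would fix the model $y^2=\prod_{i=1}^{2g+1}(x-\a_i)$ with the remaining branch point at $\infty$, take the basis of holomorphic differentials $x^{k-1}\,dx/y$, normalize it against the chosen $A$-cycles to obtain $\om_1,\dots,\om_g$, and record that the entries $\t_{jk}$ of the period matrix are then real-analytic functions of the $\a_i$. Both $\T[\eta_T](0;\t)^4$ and the product $P_T:=\prod_{i<j,\ i,j\in T\triangle U}(\a_i-\a_j)\prod_{i<j,\ i,j\notin T\triangle U}(\a_i-\a_j)$ are thereby functions of the $\a_i$, and it suffices to show $\partial_{\a_i}\log\T[\eta_T](0;\t)^4=\partial_{\a_i}\log P_T$ for each $i$.

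For the theta side I would push the $\a_i$-derivative through $\t$ by the chain rule, $\partial_{\a_i}\log\T[\eta_T]^4=4\sum_{j\le k}\big(\partial_{\t_{jk}}\log\T[\eta_T]\big)\,\partial_{\a_i}\t_{jk}$, and then use two classical inputs. The heat equation $\partial_{\t_{jk}}\T=\tfrac{1}{2\pi i(1+\d_{jk})}\partial_j\partial_k\T$ converts the $\t$-derivative into the normalized second $z$-derivative of $\T[\eta_T]$ at the origin; here the first derivatives drop out because $\eta_T$ is even, so by the parity identity \eqref{parityIdentity} $\T[\eta_T](z,\t)$ is even in $z$ and $\partial_j\T[\eta_T](0)=0$, while nonsingularity of $\eta_T$ (consistent with Theorem~\ref{vanishingProperty}) guarantees $\T[\eta_T](0)\ne 0$ so that the logarithmic derivative makes sense. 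Rauch's variational formula then supplies $\partial_{\a_i}\t_{jk}=c\,v_j^{(i)}v_k^{(i)}$, where $v_j^{(i)}$ is the leading coefficient of $\om_j$ in the local coordinate $s$, with $x-\a_i=s^2$, at the branch point $\a_i$. Assembling these expresses $\partial_{\a_i}\log\T[\eta_T]^4$ as a quadratic form in the $v_j^{(i)}$ weighted by the second logarithmic derivatives of $\T[\eta_T]$ at $0$.

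The product side is elementary: $\partial_{\a_i}\log P_T=\sum_j(\a_i-\a_j)^{-1}$, summed over those $j$ lying in the same block of the partition $\{T\triangle U,(T\triangle U)^c\}$ as $i$. The heart of the argument, and the step I expect to be the main obstacle, is to prove that the theta expression from the previous paragraph collapses to exactly this sum. This requires the hyperelliptic identity expressing $\partial_j\partial_k\log\T[\eta_T](0)$, after contraction with the $v^{(i)}$, in terms of the branch points belonging to $T\triangle U$; concretely one must establish that $\sum_{j,k}\big(\partial_j\partial_k\log\T[\eta_T](0)\big)v_j^{(i)}v_k^{(i)}$ reproduces precisely $\sum_{j\ \text{same block}}(\a_i-\a_j)^{-1}$. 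I would obtain this by comparing the divisor of the hyperelliptic theta function along the Abel image and using the correspondence between $\eta_T$ and the partition $T\triangle U$, so that the vanishing data of $\T[\eta_T]$ pins down the residues $(\a_i-\a_j)^{-1}$; Frobenius' theta formula and the vanishing Theorem~\ref{vanishingProperty} are the tools that control which pairs of indices survive.

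Once equality of logarithmic derivatives is established for all $i$, the ratio $\T[\eta_T]^4/P_T$ is independent of the $\a_i$, i.e.\ a constant $A_T$ depending a priori on $T$. To finish I would show $A_T$ is in fact independent of $T$: since every $A_T$ is constant on the family, it suffices to relate two of them at a single convenient configuration, which I would do by invoking one thetanull identity of the type \eqref{eq1}--\eqref{eq2} (or directly Frobenius' formula) linking $\T[\eta_T]$ and $\T[\eta_{T'}]$; the sign $(-1)^{\#T\cap U}$ then emerges from tracking the half-integer characteristic $\eta_T=\sum_{\a_k\in T}\e(k)$ through this comparison. This yields the stated formula with a single universal constant $A$.
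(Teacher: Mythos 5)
The paper never actually proves Lemma~\ref{Thomae}: its ``proof'' is the citation to Mumford \cite{Mu1} (p.~120 for the argument, p.~128 for the constant $A$), and the argument being pointed to is function-theoretic --- it converts ratios of theta functions into ratios of rational functions on the hyperelliptic curve, using the vanishing theorem (Theorem~\ref{vanishingProperty}) and Frobenius-type identities, and then specializes points to branch points. Your plan --- equate $\partial_{\alpha_i}\log$ of the two sides via the heat equation and Rauch's variational formula, then fix the constant --- is therefore a genuinely different route, and it is a legitimate one: it is essentially the strategy used for Thomae-type generalizations in the papers this article cites (Nakayashiki, Enolski--Grava). Your ingredients on the theta side are correctly identified: evenness of $\eta_T$ kills the first $z$-derivatives at $0$, nonsingularity (i.e.\ $\#(T\triangle U)=g+1$) gives $\theta[\eta_T](0)\neq 0$ by Theorem~\ref{vanishingProperty}, and Rauch's formula gives $\partial_{\alpha_i}\tau_{jk}=c\,v_j^{(i)}v_k^{(i)}$.

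However, the step you yourself flag as ``the main obstacle'' is a genuine gap, not a deferred verification. The identity $\sum_{j,k}\bigl(\partial_j\partial_k\log\theta[\eta_T](0)\bigr)v_j^{(i)}v_k^{(i)}=c'\sum_{j}(\alpha_i-\alpha_j)^{-1}$ (sum over $j$ in the block of $i$) \emph{is} the logarithmic-derivative form of Thomae's formula, so the whole content of the theorem is concentrated there; and the tools you propose for it --- Frobenius' theta formula, Theorem~\ref{vanishingProperty}, ``comparing the divisor of the hyperelliptic theta function along the Abel image'' --- only control where thetanulls vanish, which is zeroth-order information, and cannot by themselves produce an evaluation of second logarithmic derivatives at $z=0$. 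What this step actually requires is Fay-type machinery: the expansion of the normalized symmetric bidifferential (Bergman kernel) $\omega(P,Q)$ in terms of $\partial_j\partial_k\log\theta[\eta](0)$, evaluated at pairs of branch points, where the rational expressions in the $\alpha_i$ arise; that is a substantial independent input which would have to be proved, not invoked. The endgame has the same weakness: your argument only yields $\theta[\eta_T]^4=A_T\,P_T$ with $A_T$ locally constant, and the claim that one identity of type \eqref{eq1}--\eqref{eq2} (or Frobenius' formula) at a convenient configuration forces $A_T=(-1)^{\#T\cap U}A$ with $A$ independent of $T$ is not worked out --- those relations are quadratic identities mixing many characteristics at once, and it is not clear they isolate the single ratio $A_T/A_{T'}$, nor is the sign tracked. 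As it stands, the proposal is a plausible program whose two decisive steps are missing.
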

See \cite[pg.  128]{Mu1} for the description of $A$ and \cite[pg.  120]{Mu1} for the proof. Using Thomae's formula and
Frobenius' theta identities we express the branch points of the hyperelliptic curves in terms of even theta constants.

\subsection{Cyclic Curves and Their Theta Functions}
A cyclic cover $\X \longrightarrow \P^1$ is defined to be a Galois cover with cyclic Galois group $C.$ We call it a
normal cyclic cover of $\P^1$ if $C$ is normal in $G=Aut(\X)$ where $Aut(\X)$ is the automorphism group of the curve
$\X.$ Then $\bar{G} = G/C$ embeds as a finite subgroup of $PGL(2,\C)$ and it is called the reduced automorphism group
of $G.$

\noindent An affine  equation of a cyclic curve can be given by the following:
\begin{equation}\label{cyclic}
y^m = f(x) = \prod_{i=1}^s (x-\a_i)^{d_i} , \, m = |C|, \,\, 0 < d_i < m.
\end{equation}
Note that when $d_i>0$ for some $i$ the curve is singular.
Hyperelliptic curves are cyclic curves with $m=2$. After Thomae, many mathematicians, for example Fuchs, Bolza, Fay,
Mumford, et al., gave derivations of Thomae's formula in the hyperelliptic case. In 1988 Bershdaski and Radul found a
generalization of Thomae's formula for $Z_N $ curves of the form
\begin{equation} \label{Nakayashiki}
y^N = f(x) = \prod_{i=1}^{Nm}(x-a_i).
\end{equation}

In 1988 Shiga showed the representation of the Picard modular function by theta constants. He considered the algebraic
curve in the $(x,y)$ plane which is given by
\begin{equation}\label{picardcurve}
C(\ep) : y^3 = x (x-a_0) (x-a_1) (x-a_2)
\end{equation}
where $\ep = [a_0, a_1,a_2]$ is a parameter on the domain

$$\Lambda = \{ \ep : a_0 a_1 a_2 (a_0-a_1) (a_0-a_2) (a_1-a_2) \neq 0 \}.$$
He gave a concrete description of the Picard work \cite{picard}. His result can be considered  an extension of the
classical Jacobi representation $\lambda = \frac{\T_2^4}{\T_3^4}$, where $\T_i(z,\t)$ indicates Jacobi's theta function
and $\T_i$ is the convention for $\T_i(0,\t)$, for the elliptic modular function $\lambda(\t)$ to the special case of
genus 3.

In 1991, Gonzalez Diez studied the moduli spaces parameterizing algebraic curves which are Galois covering of $\P^1$
with prime order and with given ramification numbers. These curves have equation of the form
\begin{equation}\label{Gabino}
y^p = f(x) = \prod_{i=1}^{r}(x-a_i)^{m_i}  ; \textit {p prime and  } p  \nmid \sum m_i.
\end{equation}
He expresses $a_i$ in terms of functions of the period matrix of the curve.

Farkas (1996) gave a procedure for calculating the complex numbers $a_i$ which appear in the algebraic equation
\begin{equation}\label{Farkas}
y^p = \prod_{i=1}^{k}(x-a_i) \, \, \, \, \textit{with} \,\,\, p|k
\end{equation}
in terms of the theta functions associated with the Riemann surface of the algebraic curve defined by the
Eq.~\eqref{Farkas}. He used the generalized cross ratio of four points according to Gunning. Furthermore he considered
the more general problem of a branched two-sheeted cover of a given compact Riemann surface and obtained the relations
between the theta functions on the cover and the theta function to the original surface.

Nakayashiki, in 1997, gave an elementary proof of Thomae's formula for $Z_N$ curves which was discovered by Bershadsky
and Radul.
Enolski and Grava, in 2006, derived the analogous generalized Thomae's formula for the $Z_N$ singular curve of the form
\begin{equation} \label{Enolski}
y^N = f(x) = \prod_{i=1}^{m}(x-\lambda_{2i})^{N-1} \prod_{i=1}^{m} ( x-\lambda_{2i+1}).
\end{equation}

\noindent We summarize all the results in the following theorem.

\begin{thm}\label{CyclicCurvesThm}
Consider the algebraic curve $ \X : y^n = f(x)$ defined over the complex field $\C.$
%
%\[ \X : y^n = f(x) = \prod_{i=1}^k (x- \l_i)\]

\noindent\textbf{Case 1:} If $\triangle_f \neq 0,$ say $f(x) = \prod_{i=1}^k (x- \l_i)$ then,

%\textbf{\underline {i)}}
\underline {i)} If $n | k,$ say $k = mn$ for some $m \in \N$ then,

for an ordered partition $\Lambda = (\Lambda_0, \cdots, \Lambda_{n-1})$ of $\{1,2,\cdots, nm\},$ we have
\[ \T[e_\Lambda](0)^{2n} = C_\Lambda (det A)^n \prod_{i<j}(\lambda_i -
\lambda_j)^{2 n \sum_{\ell \in L} q_\ell(k_i) q_{\ell}(k_j) + \frac{(n-1)(2n-1)}{6}}\]
where $k_i = j$ for $i \in \Lambda_j$ and $e_\Lambda \equiv \Lambda_1 + 2 \Lambda_2 +\cdots +  (n-1)\Lambda_{n-1} - D -
\varsigma$ is the associated divisor class of the partition $\Lambda,$ $L  = \big\{-\frac{N-1}{2}, -\frac{N-1}{2} +1,
\cdots, \frac{N-1}{2}\big\},$ $q_\ell(i) = \frac{1-N}{2N} +\,\, \textit{fraction part of}\,\,\big(\frac { \ell + i +
\frac{N-1}{2}}{N}\big)   \, \, \, \, \textit{for} \, \, \ell \in L,$ $\varsigma$ is Riemann's constant and $C_\Lambda$
depends on the partition $\Lambda$ having the property that for two different partitions $\Lambda$ and $\Lambda^\prime$
we have $C_\Lambda ^{2N} =C_{\Lambda ^\prime} ^{2N}.$

Moreover if $n $ is a prime $p,$ the branch points $\lambda_i$ of the curve $y^n = x(x-1)(x-\lambda_1) \cdots
(x-\lambda_{k-3})$  can be given by
\[ E_i^n \lambda_i = (\lambda(P_k, Q_0, Q_1, Q_\infty))^n\]
where
$\lambda(P_k, Q_0, Q_1, Q_\infty) = \frac{\theta(e+\phi_{Q_0}(P_k))\theta(e+\phi_{Q_{\infty}}(Q_1))}
{\theta(e+\phi_{Q_\infty}(P_k))\theta(e+\phi_{Q_{0}}(Q_1))},$ while  $Q_0,$ $Q_1,$ and $Q_\infty$ denote the points in
the curve corresponding to the points $0,$ $1,$ and $\infty$ in $\P^1$ respectively, $P_i$'s are points in the curve
corresponding to the points $\lambda_i,$ $E_i$ is a constant depending on the point $P_i$
%given by the integration $exp(-\int_{Q_1}^{P_i} \sum_{i=1}^g$
%
and $\phi_P $ is an injective map from $\X$ to $\C^g/G.$

 \underline {ii)} If $ n \nmid k,$ then,

if $n=3$ and $k=4$, then the parameters $\l_1, \l_2, \l_3$ can be given as follows:

$$\l_1 =  \T^3 \chs{0}{\frac{1}{6}}{0}{0}{\frac{1}{6}}{0},  \quad \quad \l_2 = \T^3
\chs{0}{\frac{1}{6}}{0}{\frac{1}{3}}{\frac{1}{6}}{\frac{1}{3}}, \quad \quad \l_3 =  \T^3
\chs{0}{\frac{1}{6}}{0}{\frac{2}{3}}{\frac{1}{6}}{\frac{2}{3}}.$$

\noindent \textbf{Case 2:} If $\triangle_f = 0,$ let $f(x) = \prod_{k=0}^m (x- \lambda_{2k+1})\prod_{k=1}^m (x-
\lambda_{2k})^{n-1}.$ Then,
\[
\begin{split}
\T[e_m] (0; \O)^{4N} = &\frac{\prod_{i=1}^{N-1} \det A_i^{2N}}{(2 \pi i )^{2mN(N-1)} } \prod_{1 \leq i < k \leq m
}(\lambda_{2i}-\lambda_{2k})^{N(N-1)} \\
 & \times \prod_{0 \leq i < k \leq m
}(\lambda_{2i+1}-\lambda_{2k+1})^{N(N-1)} \\
 & \times (\frac{\prod_{i \in I_1 , j \in J_1 }(\lambda_i - \lambda_j) \prod_{i \in I_2 , j \in J_2 }(\lambda_i - \lambda_j)}
 {\prod_{i \in I_1 , k \in I_2 }(\lambda_i - \lambda_k) \prod_{j \in J_1 , k \in J_2 }(\lambda_i -
 \lambda_j)})^{2(N-1)},
\end{split}
\]
where
%
%\begin{tabular}{l}
$e_m = \nu ((N-1) \sum_{i \in I_1}P_i +(N-1) \sum_{j \in J_1} P_j - D- \triangle )$ is a nonsingular
$\frac{1}{N}$ characteristic,
 $J_1 \subset J_0 = \{2,4,\cdots,2m+2\}$ and $I_1 \subset I_0 =
\{1,3,\cdots,2m+1\}$ with
$|J_1| + |I_1| = m+1$ and $I_2 = I_0 - I_1 , J_2 = J_0 - J_1 - 2m+2,$ and
$\triangle = (N-1) \sum_{k=1}^m P_{2k} - P_{\infty}$ is the Riemann divisor of the curve $\X.$
%
%\end{tabular}

\end{thm}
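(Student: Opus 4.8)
The plan is to recognize Theorem~\ref{CyclicCurvesThm} as a synthesis of four independent generalizations of Thomae's formula (Lemma~\ref{Thomae}), one for each regime of the pair $(n,k)$, and to assemble them inside a single uniform framework rather than to prove each from scratch. First I would fix, once and for all, a symplectic homology basis and an Abel--Jacobi embedding $\phi_P$ adapted to the cyclic automorphism $\sigma:(x,y)\mapsto(x,\zeta_n y)$ with $\zeta_n$ a primitive $n$-th root of unity, so that the eigenspace decomposition of the space of holomorphic $1$-forms under $\sigma^*$ is explicit. On $\X:y^n=f(x)$ the differentials are spanned by forms $x^{a}y^{-b}\,dx$ for admissible exponents, and their distribution among the $\sigma^*$-eigenspaces is exactly what governs the fractional quantities $q_\ell(i)$ and hence produces the exponent $2n\sum_{\ell\in L}q_\ell(k_i)q_\ell(k_j)+\frac{(n-1)(2n-1)}{6}$ appearing in Case~1(i). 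With this normalization the assignment sending an ordered partition $\Lambda$ to the class $e_\Lambda\equiv\Lambda_1+2\Lambda_2+\cdots+(n-1)\Lambda_{n-1}-D-\varsigma$ is well defined, where $\varsigma$ is Riemann's constant; one must then verify, via Theorem~\ref{vanishingProperty}, that each such $e_\Lambda$ is a nonsingular even (resp. $\frac1N$-) characteristic, which is the bookkeeping that guarantees every theta constant in the statement refers to the same function.

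For Case~1(i), where $n\mid k$, I would follow Nakayashiki's elementary derivation of the Bershadsky--Radul formula. The key step is to express the thetanull $\T[e_\Lambda](0)$ through Fay's expansion of the theta function in terms of the prime form $E(P,Q)$ evaluated at the branch points, letting $P,Q$ degenerate onto the $\lambda_i$; the leading coefficients of $E$ in the local $n$-th root coordinate contribute the factors $(\lambda_i-\lambda_j)$, while the $\sigma$-eigenvalue weights attached to each branch point supply the stated exponent. The determinant factor $(\det A)^n$ records the change of basis between the adapted differentials and the normalized basis $\{w_i\}$ with $\int_{A_i}w_j=\delta_{ij}$, and the remaining ambiguity is absorbed into $C_\Lambda$, whose property $C_\Lambda^{2N}=C_{\Lambda'}^{2N}$ follows because any two partitions differ by a translate that only multiplies the ratio of thetanulls by a $2N$-th root of unity. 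The second, branch-point-recovery statement I would obtain from the Gonzalez--Diez/Farkas construction: forming Gunning's generalized cross ratio $\lambda(P_k,Q_0,Q_1,Q_\infty)$ of the four distinguished points and using the $\sigma$-invariance of the Jacobian coordinates shows that $E_i^n\lambda_i$ equals the $n$-th power of that theta quotient.

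Case~1(ii) with $n=3$, $k=4$ is the Picard curve, and here I would specialize Shiga's representation of the Picard modular function. After reducing to the normal form $y^3=x(x-a_0)(x-a_1)(x-a_2)$ one identifies the three moduli with quotients of theta constants; carrying the specific half-period translations through the $3$-torsion structure of $\J(\X)$ yields the cubes $\T^3\chs{0}{\frac16}{0}{*}{\frac16}{*}$ with the displayed second-row entries $0,\frac13,\frac23$. Case~2, the singular curve with $\triangle_f=0$, is handled by the Enolski--Grava argument, which runs parallel to Case~1(i): one normalizes the curve $y^N=\prod_{k=0}^m(x-\lambda_{2k+1})\prod_{k=1}^m(x-\lambda_{2k})^{N-1}$, computes its Riemann divisor $\triangle=(N-1)\sum_{k=1}^m P_{2k}-P_\infty$, and partitions the branch indices into $I_1,I_2,J_1,J_2$; the difference from the nonsingular case is that the unequal multiplicities $N-1$ versus $1$ force the extra rational factor in the index sets, together with the $(2\pi i)$-power coming from the regularized prime form at the singular fibres.

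The main obstacle I anticipate is not any single computation but the reconciliation of conventions and normalizing constants across these four sources, since Bershadsky--Radul, Nakayashiki, Shiga, and Enolski--Grava each fix their own homology bases, Abel--Jacobi base points, and branches of fractional powers. Concretely, one must pin down Riemann's constant $\varsigma$ and the determinant factors $\det A$, $\det A_i$ in one chosen normalization and then check that, under it, the characteristics $e_\Lambda$, $e_m$, and the Picard half-periods all become nonsingular characteristics of the \emph{same} theta function, so that the four displayed identities are genuinely statements about the single period matrix $\O\in\HS_g$ of $\X$. Establishing this compatibility, rather than the individual Fay/prime-form estimates, is where the real care is required.
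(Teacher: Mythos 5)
Your proposal takes essentially the same approach as the paper: the paper's entire proof of this theorem is a set of citations --- Nakayashiki \cite{NK} for Case 1(i), Farkas \cite{Farkas} for the prime-degree branch-point recovery, Shiga \cite{SHI} for Case 1(ii), and Enolski--Grava \cite{Enolski} for Case 2 --- which is precisely the four-way decomposition and attribution you identify. Your sketches of the internal machinery (prime-form expansions, eigenspace bookkeeping, reconciliation of normalizations) go beyond anything the paper actually writes down, but the route is the same.
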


\begin{proof}
For proof of the part $i)$ of case 1, see \cite{NK}. When $n$ is prime, the proof can be found in \cite{Farkas}. The
main point of \cite{SHI} is to prove part $ii)$ of case 1. The proof of case 2 can be found in \cite{Enolski}.
\end{proof}

\subsubsection{Relations Among Theta Functions for Algebraic Curves with Automorphisms}
In this section we develop an algorithm to determine relations among theta functions of a cyclic curve $\X$ with
automorphism group $\Aut (\X)$. The proof of the following lemma can be found in \cite{RF}.
\begin{lem}\label{Shiga}
Let $f$ be a meromorphic function on $\X,$ and let $$(f) = \sum_{i=1}^{m}b_i - \sum_{i=1}^{m}c_i $$ be the divisor
defined by $f.$ Take paths from $P_0$ (initial point) to $b_i$ and $P_0$ to $c_i$ so that $\sum_{i=1}^{m}
\int_{P_0}^{b_i} \om = \sum_{i=1}^{m} \int_{P_0}^{c_i} \om .$

For an effective divisor $P_1 + \cdots + P_g,$ we have
\begin{equation}
f(P_1) \cdots f(P_g) = \frac{1}{E} \prod_{k=1}  \frac{\T(\sum_{i} \int_{P_0}^{P_i} \om - \int_{P_0}^{b_k} \om -
\triangle , \tau )} {\T(\sum_{i} \int_{P_0}^{P_i} \om - \int_{P_0}^{c_k} \om - \triangle , \tau )}
\end{equation}
where $E$ is a constant independent of $P_1, \dots , P_g,$ the integrals from $P_0$ to $P_i$ take the same paths both
in the numerator and in the denominator, $\triangle$ denotes the Riemann's constant, and $\int_{P_0}^{P_i} \om = \left(
\int_{P_0}^{P_i} \om_1, \dots, \int_{P_0}^{P_i} \om_g \right)^t.$
\end{lem}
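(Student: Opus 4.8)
The plan is to prove the identity by fixing all but one of the points $P_1,\dots,P_g$, say letting $P:=P_1$ vary over $\X$ while $P_2,\dots,P_g$ stay fixed, and then showing that the two sides agree as meromorphic functions of $P$ up to a multiplicative constant; a symmetry and normalization argument then upgrades this constant to the single constant $E$ independent of all the $P_i$. Write $u(P)=\int_{P_0}^{P}\om$ for the Abel--Jacobi map and $w=\sum_{i=2}^g u(P_i)$ for the fixed contribution of the remaining points. The left-hand side $L(P):=f(P)\prod_{i=2}^g f(P_i)$ is, as a function of $P$, a constant multiple of $f$ and hence has divisor $\sum_{k} b_k-\sum_k c_k$. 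I want to match this against the divisor of the right-hand side $R(P):=\frac{1}{E}\prod_k \T(u(P)+w-\int_{P_0}^{b_k}\om-\triangle)/\T(u(P)+w-\int_{P_0}^{c_k}\om-\triangle)$.

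First I would verify that $R(P)$ is genuinely single-valued on $\X$, i.e.\ that it descends from $\C^g$ to a meromorphic function on the curve. Transporting $P$ around a homology cycle shifts $u(P)$ by a lattice vector, and the quasi-periodicity of $\T$ recorded in Eq.~\eqref{periodicproperty} multiplies each theta factor by an exponential automorphy factor. The key point is that these factors depend on the theta argument linearly, so the combined factor from numerator and denominator depends only on $\sum_k(\int_{P_0}^{b_k}\om-\int_{P_0}^{c_k}\om)$; by the hypothesis $\sum_k\int_{P_0}^{b_k}\om=\sum_k\int_{P_0}^{c_k}\om$, which is exactly Abel's theorem for the principal divisor $(f)$, this total factor is trivial and $R(P)$ is well defined on $\X$. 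This is the step where the balanced-path assumption is indispensable.

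Next I would compute the divisor of $R$ by Riemann's vanishing theorem. For a fixed $k$, the numerator factor $\T(u(P)+w-\int_{P_0}^{b_k}\om-\triangle)$, viewed as a function of $P$, either vanishes identically or has exactly $g$ zeros whose Abel--Jacobi sum is prescribed by $w$ and $b_k$; choosing the conventions for $\triangle$ so that Riemann's theorem applies, these zeros are precisely $b_k,P_2,\dots,P_g$. The points $P_2,\dots,P_g$ occur in every numerator and every denominator factor alike, so they cancel in the ratio, leaving $\mathrm{div}(R)=\sum_k b_k-\sum_k c_k=\mathrm{div}(L)$. Consequently $L/R$ is holomorphic and nowhere zero on the compact curve $\X$, hence constant in $P$.

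Finally, the resulting constant a priori depends on $P_2,\dots,P_g$; running the same argument in each variable separately, using that the expression is symmetric in $P_1,\dots,P_g$, shows that it is in fact independent of all the $P_i$, and absorbing it into $E$ yields the stated formula. The main obstacle I anticipate is not the structure of the argument but its bookkeeping: pinning down the sign conventions for the Riemann constant $\triangle$ and the Abel--Jacobi map so that Riemann's vanishing theorem delivers exactly the zeros $b_k,P_2,\dots,P_g$, and correctly handling the degenerate case in which a theta factor vanishes identically because the relevant divisor is special, together with the precise determination of $E$ in terms of the curve, $f$, and $P_0$, which is typically carried out by a degeneration or explicit local-coordinate computation.
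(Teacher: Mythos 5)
Your overall strategy---fix $P_2,\dots,P_g$, show the theta quotient is single-valued in $P=P_1$ via quasi-periodicity, match divisors, conclude the ratio is constant, then use symmetry in the $P_i$---is the standard route; note the paper itself offers no argument for this lemma, deferring entirely to \cite{RF}, so the comparison is with the classical proof, which yours follows in outline. Your single-valuedness step is correct and correctly isolates where the hypothesis $\sum_k\int_{P_0}^{b_k}\om=\sum_k\int_{P_0}^{c_k}\om$ enters: transporting $P$ around a $B_j$-cycle multiplies the product by $\exp\bigl(-2\pi i\,m^t\sum_k(\int_{P_0}^{c_k}\om-\int_{P_0}^{b_k}\om)\bigr)$, which is $1$ precisely because the two sums agree exactly as vectors (not merely modulo the lattice), as the chosen paths guarantee.

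However, the divisor computation contains a genuine error, and it is not the sign-convention bookkeeping you defer at the end. Writing $u(P)=\int_{P_0}^{P}\om$ and $w=\sum_{i\ge 2}u(P_i)$, the zeros of $P\mapsto\T\bigl(u(P)+w-u(b_k)-\triangle\bigr)$ are \emph{not} $b_k,P_2,\dots,P_g$. The convention forced by the lemma is that $\T(z)=0$ iff $z\equiv u(D)-\triangle$ for some effective $D$ of degree $g-1$; hence this factor vanishes at $P$ iff $P+P_2+\cdots+P_g-b_k$ is linearly equivalent to an effective divisor of degree $g-1$. At $P=b_k$ this holds, but at $P=P_2$ it requires $h^0(2P_2+P_3+\cdots+P_g-b_k)\ge 1$, which fails for generic $P_i$: the divisor $2P_2+P_3+\cdots+P_g$ is then non-special, so the only effective divisor in its class is itself, and it does not contain $b_k$. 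The true remaining $g-1$ zeros are the points $P$ for which $P+P_2+\cdots+P_g$ is a \emph{special} divisor; for instance, in genus $2$ the extra zero is $\iota(P_2)$ with $\iota$ the hyperelliptic involution, and for a plane quartic the extra zeros are the two residual intersections of the line through $P_2,P_3$ with the curve. No choice of conventions turns these into $P_2,\dots,P_g$. Your argument is repairable: this extra zero set depends only on $P_2,\dots,P_g$---not on $k$, and not on whether the factor sits in the numerator or the denominator---so it cancels in the full product and $\mathrm{div}(R)=\sum_k b_k-\sum_k c_k$ still holds. But that cancellation is exactly what must be proved, via the special-divisor analysis above, rather than via the incorrect identification of zeros on which your write-up rests.
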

This lemma gives us a  tool that can be used to find branch points in terms of theta constants. By considering the
meromorphic function $f = x$ on $\X$ and suitable effective divisors, we can write branch points as ratios of
thetanulls.  We present some explicit calculations using the Lemma ~\ref{Shiga} in Chapter 3 and 4. The hard part of
this

\noindent method is the difficulty of writing complex integrals in terms of characteristics.

\begin{alg}

Input: A cyclic curve $\X$ with automorphism group $G$, $\s \in G$  such that $| \s| =n$, $g (\X^\s)
=0$ and $\< \s \> \normal G$. \\

Output: Relations among the theta functions of $\X$\\

Step 1: Let $\G = G/\<\s\>$ and pick $\t \in \G$ such that $\t$ has the largest order $m$.

Step 2:  Write the equation of the curve in the form \[ y^n = f(x^m) \,\,\, \textit{or} \,\,\,   y^n = x f(x^m).\]

Step 3: Determine the roots $\l_1, \dots , \l_r$ of $f(x^\t)$ in terms of the theta functions.

Step 4: Determine relations on theta functions using Gr\"obner basis techniques.

\end{alg}

For step 3, we can use Lemma ~\ref{Shiga}. If the curve in step 3 falls into one of the categories given in Theorem
~\ref{CyclicCurvesThm}, we can use the corresponding equation to invert the period map without worrying about the
complex integrals.

%%%%%%%%%%%%%%%%%%%%%%%%%%%%%%%%%%%%%%%%%%%%%%%%%%%%%%%%%%%%%%%%%%%%%%%%%%%%%%%%%%%%%%%%%%%%%%%%%%%%%%%%%%%%%%%%%%%%%%%%%%%%%%%%%%%%%%

\section{Genus 2 curves}
Let $k$ be an algebraically closed field of  characteristic zero and $\X$ be a genus 2 curve defined over $k$. Consider
a binary sextic, i.e. a homogeneous polynomial $f(X,Z)$ in $k[X,Z]$ of degree 6:
$$f(X,Z)=a_6 X^6+ a_5 X^5Z+\dots  +a_0 Z^6.$$ The polynomial functions of the coefficients of a binary sextic $f(X,Z)$ invariant under
linear substitutions in $X,Z$ of determinant one. These invariants were worked out by Clebsch and  Bolza in the case of
zero characteristic and generalized by Igusa for any characteristic different from 2.

{\it Igusa  $J$-invariants} $\, \, \{ J_{2i} \}$ of $f(X,Z)$ are homogeneous polynomials of degree $2i$ in $k[a_0,
\dots , a_6]$, for  $i=1,2,3,5$; see   \cite{S7} for their definitions. Here  $J_{10}$ is the discriminant of $f(X,Z)$.
It vanishes if and only if the binary sextic has a multiple linear factor. These $J_{2i}$  are invariant under the
natural action of $SL_2(k)$ on sextics. Dividing such an invariant by another invariant with the same degree, gives an
invariant (eg. absolute invariant) under $GL_2(k)$ action. The absolute invariants of $\X$ are defined in terms of
Igusa invariants as follows:  \[ i_1 := 144 \frac{J_4}{J_2^2}, \quad \quad i_2:= -1728 \frac{J_2 J_4 - 3 J_6}{J_2^3},
\quad \quad i_3:= 486 \frac{J_{10}}{J_2 ^5}. \]
Two genus  2 fields (resp., curves) in the standard form $Y^2=f(X,1)$ are isomorphic if and only if the corresponding
sextics are $GL_2(k)$ conjugate.
\subsection{Half Integer Theta Characteristics} For genus two curve, we have six odd theta
characteristics and ten even theta characteristics. The following are the sixteen theta characteristics where the first
ten are even and the last six are odd. For simplicity, we denote them by $\T_i(z)$ instead of $\T_i \ch{a} {b} (z ,
\t)$ where $i=1,\dots ,10$ for the even functions and $i=11, \dots, 16$ for the odd functions.
%\begin{small}
\[
\begin{split}
\T_1(z) &= \T_1 \chr {0}{0}{0}{0} (z , \t), \qquad \qquad \T_2(z) = \T_2 \chr {0}{0}{\frac{1}{2}} {\frac{1}{2}} (z , \t)\\
\T_3(z) &= \T_3 \chr {0}{0}{\frac{1}{2}}{0}(z , \t), \qquad \qquad
\T_4(z) = \T_4 \chr {0}{0}{0}{\frac{1}{2}} (z , \t)\\
\T_5(z) &= \T_5 \chr{\frac{1}{2}}{0} {0}{0}(z , \t), \qquad \qquad
\T_6(z) = \T_6 \chr {\frac{1}{2}}{0}{0}{\frac{1}{2}} (z , \t)\\
\T_7(z) &= \T_7 \chr{0}{\frac{1}{2}} {0}{0} (z , \t), \qquad \qquad
\T_8(z) = \T_8 \chr{\frac{1}{2}}{\frac{1}{2}} {0}{0} (z , \t)\\
\end{split}
\]
\[
\begin{split}
\T_9(z) &= \T_9 \chr{0}{\frac{1}{2}} {\frac{1}{2}}{0}(z , \t), \qquad \qquad
\T_{10}(z) = \T_{10} \chr{\frac{1}{2}}{\frac{1}{2}} {\frac{1}{2}}{\frac{1}{2}} (z , \t)\\
\T_{11}(z) &= \T_{11} \chr{0}{\frac{1}{2}} {0}{\frac{1}{2}} (z , \t), \qquad \qquad
\T_{12}(z) = \T_{12} \chr{0}{\frac{1}{2}} {\frac{1}{2}}{\frac{1}{2}} (z , \t)\\
\T_{13}(z) &= \T_{13} \chr{\frac{1}{2}}{0} {\frac{1}{2}}{0} (z , \t), \qquad \qquad
\T_{14}(z) = \T_{14} \chr{\frac{1}{2}}{\frac{1}{2}} {\frac{1}{2}}{0} (z , \t)\\
\T_{15}(z) &= \T_{15} \chr{\frac{1}{2}}{0} {\frac{1}{2}}{\frac{1}{2}} (z , \t), \qquad \qquad
\T_{16}(z) = \T_{16} \chr{\frac{1}{2}}{\frac{1}{2}} {0}{\frac{1}{2}} (z , \t)\\
\end{split}
\]
%\end{small}
%
\begin{rem}
All the possible half-integer characteristics except the zero characteristic can be obtained as the sum of not more
than 2 characteristics chosen from the following 5 characteristics:

\[ \left\{\chr {0}{0}{\frac{1}{2}} {\frac{1}{2}}, \chr {\frac{1}{2}}{0}{0}{\frac{1}{2}}, \chr{0}{\frac{1}{2}} {0}{0}, \chr{\frac{1}{2}}{0}
{\frac{1}{2}}{\frac{1}{2}},\chr{0}{\frac{1}{2}} {0}{\frac{1}{2}} \right\}.\] The sum of all 5 characteristics in the
set determines the zero characteristic.
\end{rem}

Take $\sigma = g - r =0 $. Then a G\"opel group $G$ contains four elements. The number of such G\"opel groups is 15.
Let $G = \{0, \m_1, \m_2, \m_1 \m_2\}$ be a G\"opel group of even characteristics (we have six such groups). Let
$\bn_1, \bn_2, \bn_1 \bn_2 $ be the characteristics such that the $G , \bn_1 G , \bn_2 G , \bn_1 \bn_2 G $ are all the
cosets of the group $G.$ Then each of the systems other than $G$ contains two odd characteristics and two even
characteristics. Consider equations given by Eq.~\eqref{eq1} and Eq.~\eqref{eq2}. If $\hn$ denotes any one of the 3
characteristics $\m_1,\m_2,\m_1 \m_2$, then we have 6 possible characteristics for $\en$, which satisfy $|\en,\hn|
\equiv | \hn| \equiv 0$. They are $0, \n, \bn, \hn, \n \hn, \bn \hn$ where $\n$ is a characteristic in the G\"opel
group other than $\hn$, and $\bn$ is an even characteristic chosen from one of the systems $\bn_1 G, \bn_2 G, \bn_1
\bn_2 G$. The following three cases illustrate the possible values for characteristic $\hn$ and for characteristic
$\en$. Without loss of generality, we can take only three values for $\en$ which give rise to different terms on the
right hand side of Eq.~\eqref{eq1} and Eq.~\eqref{eq2}.

\noindent \textbf{Case 1:}  $\hn= \m_1.$

Take $\en \in \{ 0, \m_2, \bn_1 \}$ and take $\an = \bn_1$. Then from Eq.~\eqref{eq1} and Eq.~\eqref{eq2} we have
%
%\begin{small}
\[
\begin{split}
&{ \m_1 \choose \bn_1} \T^2[ 0] \T^2[ \m_1] + e^{\pi i |\bn_1 \m_2|} { \m_1 \choose \bn_1 \m_2} \T^2[\m_2] \T^2[\m_1
\m_2] -
\T^2[\bn_1] \T^2[\bn_1 \m_1] = 0, \, \,  \\
&\T^4[ 0] + \T^4[ \m_1] + e^{\pi i |\bn_1 \m_2|} [ \T^4[ \m_2] + \T^4[ \m_2 \m_1]] - [ \T^4[ \bn_1] + \T^4[
\bn_1 \m_1]] = 0. \\
\end{split}
\]
%\end{small}

\noindent \textbf{Case 2:} $\hn= \m_2.$

Take $\en \in \{ 0, \m_1, \bn_2 \}$ and take $\an = \bn_2$. Then from Eq.~\eqref{eq1} and Eq.~\eqref{eq2} we have
%
%\begin{small}
\[
\begin{split}
&{ \m_2 \choose \bn_2} \T^2[ 0] \T^2[ \m_2] + e^{pi i |\bn_2 \m_1|} { \m_2 \choose \bn_2 \m_1} \T^2[ \m_1] \T^2[\m_1
\m_2] -
\T^2[ \bn_2] \T^2[\bn_2 \m_2] = 0, \,\,  \\
&\T^4[ 0] + \T^4[ \m_2] + e^{pi i |\bn_2 \m_2|} [ \T^4[ \m_1] + \T^4[ \m_1 \m_2]] - [ \T^4[ \bn_2] + \T^4[
\bn_2 \m_2]] = 0. \\
\end{split}
\]
%\end{small}

\vspace*{.4cm}

\noindent \textbf{Case 3:} $\hn= \m_1 \m_2.$

Take $\en \in \{ 0, \m_1, \bn_1 \bn_2 \}$ and take $\an = \bn_1 \bn_2$. Then from Eq.~\eqref{eq1} and Eq.~\eqref{eq2}
we have
%
%\begin{small}
\[
\begin{split}
&{ \m_1 \m_2 \choose \bn_1 \bn_2} \T^2[ 0] \T^2[ \m_1 \m_2] + e^{pi i |\bn_1 \bn_2 \m_1|} { \m_1 \m_2 \choose \bn_1
\bn_2 \m_1} \T^2[ \m_1] \T^2[ \m_2] -\\
&\T^2[ \bn_1 \bn_2] \T^2[ \bn_1 \bn_2 \m_1 \m_2] = 0, \,\,  \\
&\T^4[ 0] + \T^4[ \m_1 \m_2] + e^{pi i |\bn_1 \bn_2 \m_1|} [ \T^4[ \m_1] + \T^4[ \m_2]] - [ \T^4[ \bn_1 \bn_2] +
\T^4[ \bn_1 \bn_2 \m_1 \m_2]] = 0. \\
\end{split}
\]
%\end{small}
%
%\end{description}
%
The identities above express the even theta constants in terms of four theta constants; therefore, we may call them
fundamental theta constants,
$$\T[0] ,\T[ \m_1], \T[ \m_2], \T[\m_1 \m_2].$$
\subsection{Identities of Theta Constants}
We have only six G\"opel groups such that all characteristics are even. The following are such G\"opel groups and
corresponding identities of theta constants.
%\medskip
\begin{description}

\item [i)] $G = \left\{0 = \chr{0}{0}{0}{0}, \m_1 = \chr {0}{0}{0}{\frac{1}{2}}, \m_2 = \chr
{0}{0}{\frac{1}{2}}{0}, \m_1 \m_2 = \chr {0}{0}{\frac{1}{2}} {\frac{1}{2}} \right\}$ is a G\"opel group. If $\bn_1 =
\chr{\frac{1}{2}}{0} {0}{0}, \bn_2 = \chr{0}{\frac{1}{2}} {\frac{1}{2}}{0}$, then the corresponding G\"opel systems are
given by the following:
%
%\begin{tiny}
\[
\begin{split}
G &= \left\{ \chr{0}{0}{0}{0},  \chr {0}{0}{0}{\frac{1}{2}},  \chr {0}{0}{\frac{1}{2}}{0},
 \chr {0}{0}{\frac{1}{2}} {\frac{1}{2}} \right\}, \\
\bn_1 G &= \left\{\chr{\frac{1}{2}}{0} {0}{0}, \chr {\frac{1}{2}}{0}{0}{\frac{1}{2}},
 \chr{\frac{1}{2}}{0} {\frac{1}{2}}{0}, \chr{\frac{1}{2}}{0} {\frac{1}{2}}{\frac{1}{2}} \right\}, \\
\bn_2 G &= \left\{\chr{0}{\frac{1}{2}} {\frac{1}{2}}{0}, \chr{0}{\frac{1}{2}} {\frac{1}{2}}{\frac{1}{2}},
\chr{0}{\frac{1}{2}} {0}{0},
\chr{0}{\frac{1}{2}} {0}{\frac{1}{2}} \right\}, \\
\end{split}
\]
\[
\begin{split}
\bn_3 G &= \left\{\chr{\frac{1}{2}}{\frac{1}{2}} {\frac{1}{2}}{0}, \chr{\frac{1}{2}}{\frac{1}{2}}
{\frac{1}{2}}{\frac{1}{2}}, \chr{\frac{1}{2}}{\frac{1}{2}} {0}{0}, \chr{\frac{1}{2}}{\frac{1}{2}} {0}{\frac{1}{2}}
\right\}.
\end{split}
\]
%\end{tiny}
%
Notice that from all four cosets, only $G$ has all even characteristics as noticed in Corollary~\ref{numb_systems}.
Using Eq.~\eqref{eq1} and Eq.~\eqref{eq2}, we have the following six identities for the above G\"opel group:
\[
\left \{ \begin{array}{lll}
 \T_5^2 \T_6^2 & = & \T_1^2 \T_4^2 - \T_2^2 \T_3^2 ,\\
 \T_5^4 + \T_6^4 & =& \T_1^4 - \T_2^4 - \T_3^4 + \T_4^4, \\
 \T_7^2 \T_9^2 & = & \T_1^2 \T_3^2 - \T_2^2 \T_4^2, \\
 \T_7^4 + \T_9^4 &= & \T_1^4 - \T_2^4 + \T_3^4 - \T_4^4, \\
 \T_8^2 \T_{10}^2 & = & \T_1^2 \T_2^2 - \T_3^2 \T_4^2, \\
 \T_8^4 + \T_{10}^4 & = & \T_1^4 + \T_2^4 - \T_3^4 - \T_4^4. \\
\end{array}
\right.
\]
These identities express even theta constants in terms of four theta constants. We call them fundamental theta
constants $\T_1, \, \T_2, \, \T_3, \, \T_4$.
Following the same procedure, we can find similar identities for each possible G\"opel group.
\item [ii)] $G = \left\{0 = \chr {0}{0}{0}{0}, \m_1 = \chr {0}{0}{\frac{1}{2}}{0}, \m_2 = \chr{\frac{1}{2}}{\frac{1}{2}} {\frac{1}{2}}{\frac{1}{2}},
 \m_1 \m_2 = \chr{\frac{1}{2}}{\frac{1}{2}} {0}{0}\right\}$
 is a G\"opel group. If $\bn_1 = \chr {0}{0}{\frac{1}{2}}{0} , \bn_2 =
\chr {\frac{1}{2}}{0}{0}{\frac{1}{2}} $, then the corresponding G\"opel systems are given by the following:
%
%\begin{tiny}
\[
\begin{split}
G &= \left\{ \chr {0}{0}{0}{0},  \chr {0}{0}{\frac{1}{2}}{0},  \chr{\frac{1}{2}}{\frac{1}{2}}
{\frac{1}{2}}{\frac{1}{2}}, \chr{\frac{1}{2}}{\frac{1}{2}} {0}{0}\right\}, \\
\bn_1 G &= \left\{\chr {0}{0}{\frac{1}{2}}{0}, \chr {0}{0}{0}{\frac{1}{2}}, \chr{\frac{1}{2}}{\frac{1}{2}}
{0}{\frac{1}{2}},
 \chr{\frac{1}{2}}{\frac{1}{2}} {\frac{1}{2}}{0} \right\}, \\
\end{split}
\]
\[
\begin{split}
\bn_2 G &= \left\{\chr {\frac{1}{2}}{0}{0}{\frac{1}{2}}, \chr{\frac{1}{2}}{0} {\frac{1}{2}}{0}, \chr{0}{\frac{1}{2}}
{\frac{1}{2}}{0}
, \chr{0}{\frac{1}{2}} {0}{\frac{1}{2}} \right\}, \\
\bn_3 G &= \left\{\chr{\frac{1}{2}}{0} {\frac{1}{2}}{\frac{1}{2}}, \chr{\frac{1}{2}}{0} {0}{0}, \chr{0}{\frac{1}{2}}
{0}{0} \chr{0}{\frac{1}{2}} {\frac{1}{2}}{\frac{1}{2}} \right\}.
\end{split}
\]
%\end{tiny}
%
We have the following six identities for the above G\"opel group:

%\begin{tiny}
\[
%\begin{split}
\left \{ \begin{array}{lll}
\T_3^2 \T_4^2 & =& \T_1^2 \T_2^2 - \T_8^2 \T_{10}^2, \\
\T_3^4 + \T_4^4 & = &\T_1^4 + \T_2^4 - \T_8^4 - \T_{10}^4, \\
 \T_6^2 \T_9^2 & = & -\T_1^2 \T_{10}^2 + \T_2^2 \T_8^2, \\
\T_6^4 + \T_9^4 & = & \T_1^4 - \T_2^4 - \T_8^4 + \T_{10}^4, \\
 \T_5^2 \T_7^2 & = & \T_1^2 \T_8^2 - \T_2^2 \T_{10}^2, \\
\T_5^4 + \T_7^4 & = & \T_1^4 - \T_2^4 + \T_8^4 - \T_{10}^4. \\
%
%\end{split}
\end{array}
\right.
\]
%\end{tiny}
%
\item [iii)] $G = \left\{0 = \chr {0}{0}{0}{0}, \m_1 = \chr {0}{0}{\frac{1}{2}}{0}, \m_2 = \chr{0}{\frac{1}{2}}{0}{0},
\m_1 \m_2 = \chr{0}{\frac{1}{2}} {\frac{1}{2}}{0} \right\}$ is a G\"opel group. If $\bn_1 = \chr {0}{0}{\frac{1}{2}}
{\frac{1}{2}} , \bn_2 = \chr{\frac{1}{2}}{\frac{1}{2}} {0}{0} $, then the corresponding G\"opel systems are given by
the following:
%
%\begin{tiny}
\[
\begin{split}
G &= \left\{ \chr {0}{0}{0}{0},  \chr {0}{0}{\frac{1}{2}}{0},  \chr{0}{\frac{1}{2}}{0}{0},
 \chr{0}{\frac{1}{2}} {\frac{1}{2}}{0} \right\}, \\
\bn_1 G &= \left\{\chr {0}{0}{\frac{1}{2}} {\frac{1}{2}}, \chr {0}{0}{0}{\frac{1}{2}}, \chr{0}{\frac{1}{2}}
{\frac{1}{2}}{\frac{1}{2}},
 \chr{0}{\frac{1}{2}} {0}{\frac{1}{2}} \right\}, \\
\bn_2 G &= \left\{\chr{\frac{1}{2}}{\frac{1}{2}} {0}{0}, \chr{\frac{1}{2}}{\frac{1}{2}} {\frac{1}{2}}{0},
\chr{\frac{1}{2}}{0} {0}{0},
\chr{\frac{1}{2}}{0} {\frac{1}{2}}{0} \right\}, \\
\bn_3 G &= \left\{\chr{\frac{1}{2}}{\frac{1}{2}} {\frac{1}{2}}{\frac{1}{2}}, \chr{\frac{1}{2}}{\frac{1}{2}}
{0}{\frac{1}{2}}, \chr{\frac{1}{2}}{0} {\frac{1}{2}}{\frac{1}{2}}, \chr {\frac{1}{2}}{0}{0}{\frac{1}{2}} \right\}.
\end{split}
\]
%\end{tiny}
%
We have the following six identities for the above G\"opel group:
%
%\begin{tiny}
\[
%\begin{split}
\left \{ \begin{array}{lll}
\T_2^2 \T_4^2  & = &\T_1^2 \T_3^2 - \T_7^2 \T_{9}^2, \\
 \T_2^4 + \T_4^4 & = & \T_1^4 + \T_3^4 - \T_7^4 - \T_{9}^4, \\
 \T_8^2 \T_5^2 & = & \T_1^2 \T_{7}^2 - \T_3^2 \T_9^2, \\
 \T_8^4 + \T_5^4 & = & \T_1^4 - \T_3^4 + \T_7^4 - \T_{9}^4, \\
 \T_6^2 \T_{10}^2 & = & -\T_1^2 \T_9^2 + \T_3^2 \T_{7}^2, \\
 \T_6^4 + \T_{10}^4 & = & \T_1^4 - \T_3^4 - \T_7^4 + \T_{9}^4. \\
%
%\end{split}
\end{array}
\right.
\]
%\end{tiny}
%
\medskip
\item [iv)] $G = \left\{0 = \chr {0}{0}{0}{0}, \m_1 = \chr {0}{0}{0}{\frac{1}{2}}, \m_2 = \chr{\frac{1}{2}}{0} {0}{0},
\m_1 \m_2 = \chr {\frac{1}{2}}{0}{0}{\frac{1}{2}} \right\}$ is a G\"opel group. If $\bn_1 = \chr {0}{0}{\frac{1}{2}}
{\frac{1}{2}}, \bn_2 = \chr{\frac{1}{2}}{\frac{1}{2}} {0}{0}$, then the corresponding G\"opel systems are given by the
following:

%\begin{tiny}
%\[
%\begin{split}
%
\[
\begin{split}
G  &= \left\{ \chr {0}{0}{0}{0},  \chr {0}{0}{0}{\frac{1}{2}},  \chr{\frac{1}{2}}{0} {0}{0},
 \chr {\frac{1}{2}}{0}{0}{\frac{1}{2}} \right\},\\
\bn_1 G &= \left\{\chr {0}{0}{\frac{1}{2}} {\frac{1}{2}}, \chr {0}{0}{\frac{1}{2}}{0}, \chr{\frac{1}{2}}{0}
{\frac{1}{2}}{\frac{1}{2}},
 \chr{\frac{1}{2}}{0} {\frac{1}{2}}{0} \right\}, \\
 \bn_2 G &= \left\{\chr{\frac{1}{2}}{\frac{1}{2}} {0}{0}, \chr{\frac{1}{2}}{\frac{1}{2}}
{0}{\frac{1}{2}},\chr{0}{\frac{1}{2}} {0}{0},
\chr{0}{\frac{1}{2}} {0}{\frac{1}{2}} \right\}, \\
 \bn_3 G &= \left\{\chr{\frac{1}{2}}{\frac{1}{2}} {\frac{1}{2}}{\frac{1}{2}} ,\chr{\frac{1}{2}}{\frac{1}{2}}
{\frac{1}{2}}{0}, \chr{0}{\frac{1}{2}} {\frac{1}{2}}{\frac{1}{2}}, \chr{0}{\frac{1}{2}} {\frac{1}{2}}{0} \right\}.
\end{split}
\]
%\end{tiny}
%
We have the following six identities for the above G\"opel group:

%\begin{tiny}
\[
%\begin{split}
\left \{ \begin{array}{lll}
 \T_2^2 \T_3^2 & = & \T_1^2 \T_4^2 - \T_5^2 \T_{6}^2, \\
 \T_2^4 + \T_3^4 & = & \T_1^4 + \T_4^4 - \T_5^4 - \T_{6}^4, \\
 \T_8^2 \T_7^2 & = & \T_1^2 \T_{5}^2 - \T_4^2 \T_6^2, \\
 \T_8^4 + \T_7^4 & = & \T_1^4 - \T_4^4 + \T_5^4 - \T_{6}^4, \\
 \T_9^2 \T_{10}^2 & = & -\T_1^2 \T_6^2 + \T_4^2 \T_{5}^2, \\
 \T_9^4 + \T_{10}^4 & = & \T_1^4 - \T_4^4 - \T_5^4 + \T_{6}^4. \\
%
%\end{split}
\end{array}
\right.
\]
%\end{tiny}
%
\medskip
\item [v)] $G = \left\{0 = \chr {0}{0}{0}{0}, \m_1 = \chr {\frac{1}{2}}{0} {0}{0}, \m_2 = \chr {0}{\frac{1}{2}} {0}{0},
 \m_1 \m_2 = \chr {\frac{1}{2}}{\frac{1}{2}}{0}{0} \right\}$ is a G\"opel group. If $\bn_1 = \chr {\frac{1}{2}}{0}{0}{\frac{1}{2}} , \bn_2 =
\chr {0}{0}{\frac{1}{2}}{0} $, then the corresponding G\"opel systems are given by the following:
%
%\begin{tiny}
\[
\begin{split}
G &= \left\{ \chr {0}{0}{0}{0},  \chr {\frac{1}{2}}{0} {0}{0},  \chr {0}{\frac{1}{2}} {0}{0},
 \chr {\frac{1}{2}}{\frac{1}{2}}{0}{0} \right\}, \\
 \bn_1 G &= \left\{\chr {\frac{1}{2}}{0}{0}{\frac{1}{2}}, \chr {0}{0}{0}{\frac{1}{2}}, \chr{\frac{1}{2}}{\frac{1}{2}}
{0}{\frac{1}{2}},
\chr{0}{\frac{1}{2}} {0}{\frac{1}{2}} \right\}, \\
\end{split}
\]
\[
\begin{split}
 \bn_2 G &= \left\{\chr {0}{0}{\frac{1}{2}}{0},\chr{\frac{1}{2}}{0} {\frac{1}{2}}{0}, \chr{0}{\frac{1}{2}}
{\frac{1}{2}}{0},
\chr{\frac{1}{2}}{\frac{1}{2}} {\frac{1}{2}}{0} \right\}, \\
 \bn_3 G &= \left\{\chr{\frac{1}{2}}{0} {\frac{1}{2}}{\frac{1}{2}}, \chr {0}{0}{\frac{1}{2}} {\frac{1}{2}},
\chr{\frac{1}{2}}{\frac{1}{2}} {\frac{1}{2}}{\frac{1}{2}}, \chr{0}{\frac{1}{2}} {\frac{1}{2}}{\frac{1}{2}}\right\}.
\end{split}
\]
%\end{tiny}
%
We have the following six identities for the above G\"opel group:

%\begin{tiny}
\[
%\begin{split}
\left \{ \begin{array}{lll}
 \T_4^2 \T_6^2 & = & \T_1^2 \T_5^2 - \T_7^2 \T_{8}^2, \\
 \T_4^4 + \T_6^4 & = & \T_1^4 + \T_5^4 - \T_7^4 - \T_{8}^4, \\
 \T_3^2 \T_9^2 & = & \T_1^2 \T_{7}^2 - \T_5^2 \T_8^2, \\
 \T_3^4 + \T_9^4 & = & \T_1^4 - \T_5^4 + \T_7^4 - \T_{8}^4, \\
 \T_2^2 \T_{10}^2 & = & \T_1^2 \T_8^2 - \T_5^2 \T_{7}^2, \\
 \T_2^4 + \T_{10}^4 & = & \T_1^4 - \T_5^4 - \T_7^4 + \T_{8}^4. \\
%
%\end{split}
\end{array}
\right.
\]
%\end{tiny}
%
\medskip
\item [vi)] $G = \left\{0 = \chr {0}{0}{0}{0}, \m_1 = \chr {\frac{1}{2}}{0}{0}{\frac{1}{2}}, \m_2 = \chr{0}{\frac{1}{2}} {\frac{1}{2}}{0},
\m_1 \m_2 = \chr{\frac{1}{2}}{\frac{1}{2}} {\frac{1}{2}}{\frac{1}{2}} \right\}$ is a G\"opel group. If $\bn_1 =
\chr{\frac{1}{2}}{0} {0}{0} , \bn_2 = \chr{0}{\frac{1}{2}} {0}{0}$, then the corresponding G\"opel systems are given by
the following:
%
%\begin{tiny}
\[
\begin{split}
G &= \left\{ \chr {0}{0}{0}{0},   \chr {\frac{1}{2}}{0}{0}{\frac{1}{2}},   \chr{0}{\frac{1}{2}} {\frac{1}{2}}{0},
 \chr{\frac{1}{2}}{\frac{1}{2}} {\frac{1}{2}}{\frac{1}{2}} \right\}, \\
 \bn_1 G &= \left\{\chr{\frac{1}{2}}{0} {0}{0}, \chr {0}{0}{0}{\frac{1}{2}} , \chr{\frac{1}{2}}{\frac{1}{2}}
{\frac{1}{2}}{0},
  \chr{0}{\frac{1}{2}} {\frac{1}{2}}{\frac{1}{2}}  \right\}, \\
 \bn_2 G &= \left\{\chr{0}{\frac{1}{2}} {0}{0}, \chr{\frac{1}{2}}{\frac{1}{2}} {0}{\frac{1}{2}}, \chr
{0}{0}{\frac{1}{2}}{0},
 \chr{\frac{1}{2}}{0} {\frac{1}{2}}{\frac{1}{2}}\right\}, \\
 \bn_3 G &= \left\{\chr{\frac{1}{2}}{\frac{1}{2}} {0}{0}, \chr{0}{\frac{1}{2}} {0}{\frac{1}{2}}, \chr{\frac{1}{2}}{0}
{\frac{1}{2}}{0}, \chr {0}{0}{\frac{1}{2}} {\frac{1}{2}} \right\}.
\end{split}
\]
%\end{tiny}
%
We have the following six identities for the above G\"opel group:

%\begin{tiny}
\[
%\begin{split}
\left \{ \begin{array}{lll}
 \T_2^2 \T_3^2 & = & \T_1^2 \T_4^2 - \T_5^2 \T_{6}^2, \\
 \T_2^4 + \T_3^4 & = & \T_1^4 + \T_4^4 - \T_5^4 - \T_{6}^4, \\
 \T_8^2 \T_7^2 & = & \T_1^2 \T_{5}^2 - \T_4^2 \T_6^2, \\
 \T_8^4 + \T_7^4 & = & \T_1^4 - \T_4^4 + \T_5^4 - \T_{6}^4, \\
 \T_9^2 \T_{10}^2 & = & -\T_1^2 \T_6^2 + \T_4^2 \T_{5}^2, \\
 \T_9^4 + \T_{10}^4 & = & \T_1^4 - \T_4^4 - \T_5^4 + \T_{6}^4. \\
%
%\end{split}
\end{array}
\right.
\]
%\end{tiny}
%
%\end{tiny}
%
\end{description}
 From now on, we consider $\T_1, \, \T_2, \, \T_3,$ and $\T_4$ as the fundamental theta constants.
\medskip
\subsection{Inverting the Moduli Map} Let $\lambda_i,$ $i=1, \dots, n,$ be branch points of the genus
$g$ smooth curve $\X.$ Then the moduli map is a map from the configuration space $\Lambda$ of ordered $n$ distinct
points on $\P^1$ to the Siegel upper half space $\HS_g.$ In this section, we determine the branch points of genus 2
curves as functions of theta characteristics. The following lemma describes these relations using Thomae's formula. The
identities are known as Picard's formulas. We will formulate a somewhat different proof for Picard's lemma.
\begin{lem}[Picard] Let a genus 2 curve be given by
\begin{equation} \label{Rosen2}
Y^2=X(X-1)(X-\lambda)(X-\mu)(X-\nu).
\end{equation} Then, $\lambda, \mu, \nu$   can be written as follows:
%
%\begin{small}
\begin{equation}\label{Picard}
\l = \frac{\T_1^2\T_3^2}{\T_2^2\T_4^2}, \quad \mu = \frac{\T_3^2\T_8^2}{\T_4^2\T_{10}^2}, \quad \nu =
\frac{\T_1^2\T_8^2}{\T_2^2\T_{10}^2}.
\end{equation}
%\end{small}
\end{lem}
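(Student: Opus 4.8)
The plan is to invert Thomae's formula (Lemma~\ref{Thomae}) on the three ratios of fourth-power thetanulls whose square roots appear in Eq.~\eqref{Picard}. First I would fix the model so that the six branch points are $0,1,\infty,\l,\mu,\nu$, with $\infty$ the distinguished point (so $\e(\infty)=0$) and the five finite points carrying the index set $S=\{1,\dots,5\}$, whence $U=\{1,3,5\}$. The assignment of values to indices must be chosen carefully: writing $\a_1,\dots,\a_5$ for the finite branch points in the order dictated by the $\e$-map, I would take $\a_5=0$, $\a_4=1$, and $(\a_3,\a_2,\a_1)=(\l,\mu,\nu)$. Using the explicit $\e(1),\dots,\e(5)$ for $g=2$, a direct evaluation of $\e_T=\sum_{k\in T}\e(k)$ together with the vanishing criterion of Theorem~\ref{vanishingProperty} (which selects exactly the even characteristics with $\#(T\triangle U)=g+1=3$) identifies each nonzero even constant with a subset $T$. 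In particular I would record the six correspondences $\T_1\leftrightarrow\emptyset$, $\T_2\leftrightarrow\{1,2,3,4\}$, $\T_3\leftrightarrow\{1,2\}$, $\T_4\leftrightarrow\{3,4\}$, $\T_8\leftrightarrow\{2,3\}$, and $\T_{10}\leftrightarrow\{1,4\}$, which one checks directly against the half-integer labels defining $\T_1,\dots,\T_{10}$.

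Next I would apply Lemma~\ref{Thomae} to each of these six characteristics, writing $\T_i^4=(-1)^{\#(T_i\cap U)}\,A\prod(\a_p-\a_q)$ with the product running over pairs inside the block $T_i\triangle U$ and inside its complement in $S$. Since the constant $A$ is the same for all $T$, it cancels in every quotient. I would then form the three ratios $\frac{\T_1^4\T_3^4}{\T_2^4\T_4^4}$, $\frac{\T_3^4\T_8^4}{\T_4^4\T_{10}^4}$, and $\frac{\T_1^4\T_8^4}{\T_2^4\T_{10}^4}$; in each one the signs multiply to $+1$, the factor $A^2$ cancels, and a routine cancellation of the shared difference-factors leaves, respectively, $\frac{(\a_3-\a_5)^2}{(\a_4-\a_5)^2}$, $\frac{(\a_2-\a_5)^2}{(\a_4-\a_5)^2}$, and $\frac{(\a_1-\a_5)^2}{(\a_4-\a_5)^2}$.

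With the normalization $\a_5=0$, $\a_4=1$ these collapse to $\a_3^2=\l^2$, $\a_2^2=\mu^2$, and $\a_1^2=\nu^2$. Taking square roots and fixing the signs of the even thetanulls by the usual convention then gives $\frac{\T_1^2\T_3^2}{\T_2^2\T_4^2}=\l$, $\frac{\T_3^2\T_8^2}{\T_4^2\T_{10}^2}=\mu$, and $\frac{\T_1^2\T_8^2}{\T_2^2\T_{10}^2}=\nu$, which is exactly Eq.~\eqref{Picard}.

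The two places demanding real care — and where I expect the main difficulty — are the bookkeeping steps rather than the algebra. The first is aligning the paper's explicit characteristics $\T_1,\dots,\T_{10}$ with the subsets $T$ through the $\e$-map; a single mismatch permutes the constants and destroys the identities. The second, more conceptual, is the choice of index assignment: the naive ordering $(\a_1,\dots,\a_5)=(0,1,\l,\mu,\nu)$ makes each ratio reduce to a genuine cross-ratio such as $\frac{\a_3-\a_5}{\a_4-\a_5}$ rather than to a single branch point, so one must normalize three of the six branch points to $0,1,\infty$ in the precise way compatible with the labeling in order for the right-hand sides to become $\l,\mu,\nu$ themselves. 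Once these conventions are pinned down, the only remaining subtlety is the global sign incurred in passing from the exact fourth-power identities supplied by Thomae to the squared identities of Eq.~\eqref{Picard}, which is resolved by a consistent choice of square roots of the thetanulls.
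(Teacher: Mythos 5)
Your proposal is correct and follows essentially the same route as the paper's proof: fix the branch-point ordering $(\nu,\mu,\lambda,1,0)$ with $U$ the odd-indexed points, apply Thomae's formula, and form the three ratios $\theta_1^4\theta_3^4/(\theta_2^4\theta_4^4)$, $\theta_3^4\theta_8^4/(\theta_4^4\theta_{10}^4)$, $\theta_1^4\theta_8^4/(\theta_2^4\theta_{10}^4)$, which collapse to $\lambda^2,\mu^2,\nu^2$ exactly as you describe (your subset identifications $\theta_1\leftrightarrow\emptyset$, $\theta_2\leftrightarrow\{1,2,3,4\}$, $\theta_3\leftrightarrow\{1,2\}$, $\theta_4\leftrightarrow\{3,4\}$, $\theta_8\leftrightarrow\{2,3\}$, $\theta_{10}\leftrightarrow\{1,4\}$ check out against the $\e$-map, and in fact your difference-factors correct two typos in the paper's displayed list, where $\theta_3^4$ and $\theta_4^4$ should contain $(\nu-1)$ rather than $(\nu-\lambda)$). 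The only divergence is in the last step: the paper writes out all ten Thomae relations and resolves the final $\pm$ ambiguity by noting that each sign choice gives an isomorphic curve, while you compute just the six needed constants and dispatch the sign by a root convention — a cosmetic difference, handled at the same level of rigor in both.
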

\begin{proof}
There are several ways to relate $\lambda, \mu, \nu$ to theta constants, depending on the ordering of the branch points
of the curve. Let $B = \{\nu, \mu,\lambda, 1,0\}$ be the branch points of the curve in this order and $U = \{\nu,
\lambda, 0\}$ be the set of odd branch points. Using Lemma~\ref{Thomae}, we have the following set of equations of
theta constants and branch points:
\begin{equation}\label{Thomaeg=2}
\begin{array}{ll}
\T_1^4 = A \, \nu \l (\mu -1) (\nu - \l), &
\T_2^4  = A \, \mu (\mu -1) ( \nu - \l), \\
\T_3^4  = -A \, \mu  \l (\mu - \l) (\nu - \l), &
\T_4^4  = -A\, \nu (\nu - \l) (\mu - \l), \\
\T_5^4  = A \, \l \mu (\nu - 1) ( \nu - \mu),&
\T_6^4  = -A \, (\nu - \mu) (\nu -\l) ( \mu -\l),\\
\T_7^4  =  -A \, \mu (\nu -1) ( \l -1) (\nu - \l), &
\T_8^4  = -A \, \mu \nu (\nu - \mu) (\l -1), \\
\T_9^4  = A \, \nu ( \mu -1) (\l - 1) (\mu - \l), &
\T_{10}^4  = -A \, \l ( \l - 1) (\nu - \mu) \\
\end{array}
\end{equation}
where $A$ is a constant.
By choosing appropriate equations from the set Eq.~\eqref{Thomaeg=2} we have the following:

\[ \l^2 =\left(\frac{\T_1^2\T_3^2}{\T_2^2\T_4^2}\right)^2, \quad \mu^2 = \left(\frac{\T_3^2\T_8^2}{\T_4^2\T_{10}^2}\right)^2, \quad \nu^2
=\left(\frac{\T_1^2\T_8^2}{\T_2^2\T_{10}^2}\right)^2.
\]
Each value for $(\l, \mu, \nu )$ gives isomorphic genus 2 curves. Hence, we can choose
\[ \l = \frac{\T_1^2\T_3^2}{\T_2^2\T_4^2}, \quad \mu = \frac{\T_3^2\T_8^2}{\T_4^2\T_{10}^2}, \quad \nu =
\frac{\T_1^2\T_8^2}{\T_2^2\T_{10}^2}.\]
This completes the proof.
\end{proof}
\subsection{Automorphism Groups of Curves}
Let $\X$ be a genus 2 curve defined over an algebraically closed field $k$ of characteristic zero. We denote its
function field by $K:=k (\X)$ and $Aut(\X)=Aut(K/k)$ is the automorphism group of $\X$. In any characteristic different
from 2, the automorphism group $Aut(\X)$ is isomorphic to one of the groups given by the following lemma.
%&&&&&&&&&&&&&&&&&&&&&&&&&&&&&&&&&&&
%
\begin{lem}\label{thm1} The  automorphism group $G$ of a
genus 2 curve $\X$ in characteristic $\ne2$ is isomorphic to \ $C_2$, $C_{10}$, $V_4$, $D_8$, $D_{12}$, $C_3 \sem D_8$,
$ GL_2(3)$, or $2^+S_5$. The case $G \iso 2^+S_5$ occurs only in characteristic 5. If $G \iso \Z_3 \sem D_8$ (resp., $
GL_2(3)$), then $\X$ has equation $Y^2=X^6-1$ (resp., $Y^2=X(X^4-1)$). If $G \iso C_{10}$, then $\X$ has equation
$Y^2=X^6-X$.
\end{lem}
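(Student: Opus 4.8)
The plan is to pivot everything on the hyperelliptic involution. A genus $2$ curve is hyperelliptic, and as recalled above its hyperelliptic involution $\iota$ lies in the center of $G:=\Aut(\X)$ and generates a normal subgroup $\<\iota\>\normal G$. Thus $G$ is a central extension of the reduced group $\bar{G}:=G/\<\iota\>$ by $C_2$, and $\bar{G}$ acts faithfully on $\X/\<\iota\>\iso\P^1$, i.e. $\bar{G}$ is a finite subgroup of $PGL_2(k)$. Since the six Weierstrass points of $\X$ map to a reduced set $W=\{w_1,\dots,w_6\}\subset\P^1$ (reduced because $\X$ is smooth) and every automorphism permutes Weierstrass points, the group $\bar{G}$ stabilizes $W$. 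So the first step is to invoke the classification of finite subgroups of $PGL_2(k)$: over a field of characteristic zero these are the cyclic $C_n$, dihedral $D_n$, and the polyhedral groups $A_4,S_4,A_5$; in positive characteristic Dickson's theorem adds the modular groups $PSL_2(q),PGL_2(q)$ and a few exceptional families, and it is exactly these extra possibilities that will account for the anomalous characteristic $5$ case.

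The second step is a purely combinatorial orbit count. Recording the orbit lengths of each candidate on $\P^1$ --- $C_n$ has two fixed points and otherwise orbits of length $n$; $D_n$ (of order $2n$) has one orbit of length $2$, two of length $n$, and otherwise orbits of length $2n$; while $A_4,S_4,A_5$ have exceptional orbits of lengths $\{4,4,6\}$, $\{6,8,12\}$, $\{12,20,30\}$ --- I would list all ways of writing a degree-$6$ reduced divisor as a union of orbits. This immediately discards most groups (for instance $A_5$ has no orbit structure summing to $6$ in characteristic zero, so $\bar{G}\iso A_5$ is impossible away from characteristic $5$), and for the survivors it pins the configuration down so rigidly that $PGL_2(k)$ can move it to standard position: the two fixed points of a cyclic generator go to $0,\infty$, a polyhedral orbit goes to roots of unity, and so on. A key feature to exploit is that these forced configurations are often more symmetric than the group producing them, so the group \emph{upgrades}: two $C_3$-orbits forming $Y^2=X^6+aX^3+b$ automatically admit the involution $X\mapsto\beta/X$ with $\beta^3=b$ and hence carry a full $D_3$, and the size-$6$ orbit that a priori only needs $A_4$ is the octahedron, whose symmetry is the larger $S_4$. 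After these reductions the surviving reduced groups in characteristic zero are $\{1\}$, $C_2$, $C_5$, $V_4$, $D_3$, $D_6$ and $S_4$, with normal forms read off in the next step.

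The third step is to lift each surviving $\bar{G}$ through the central extension $1\to\<\iota\>\to G\to\bar{G}\to 1$ and identify $G$ together with the curve. Because $\iota$ is central of order $2$, the extension class lives in $H^2(\bar{G},C_2)$, and one determines it either by exhibiting explicit automorphisms of the normal form or by a short cohomological computation: the generic sextic gives $G\iso C_2$; $\bar{G}\iso C_2$ gives $G\iso V_4$; $\bar{G}\iso C_5$ gives $G\iso C_{10}$, realized by $Y^2=X^6-X$; $\bar{G}\iso V_4$ gives $G\iso D_8$; $\bar{G}\iso D_3$ gives $G\iso D_{12}$; $\bar{G}\iso D_6$ gives $G\iso C_3\sem D_8$ of order $24$, realized by $Y^2=X^6-1$; and $\bar{G}\iso S_4\iso PGL_2(3)$ gives the non-split extension $G\iso GL_2(3)$ of order $48$, realized by $Y^2=X(X^4-1)$. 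Reading the normalized orbit data then yields the stated equations.

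The hard part will be two things. First, pinning down the isomorphism type of the central extension rather than merely its order: one must rule out the competing groups of the same order (for example $C_8$, $C_4\times C_2$ or $Q_8$ over $\bar{G}\iso V_4$, and the split extension over $S_4$) by actually computing how a chosen generator permutes the Weierstrass points, equivalently by evaluating the relevant class in $H^2(\bar{G},C_2)$; the involution counts ($D_8$ versus $Q_8$, etc.) are the cleanest invariants here. Second, the characteristic $5$ analysis: Dickson's list permits $\bar{G}\iso PGL_2(5)\iso S_5$ acting on a six-point configuration with no characteristic-zero analogue, and one must verify that the corresponding curve acquires the extra automorphisms assembling $G$ into $2^+S_5$ of order $240$, and that this happens precisely when $\chara k=5$. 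I would treat this exceptional case by hand, separately from the uniform characteristic-zero argument.
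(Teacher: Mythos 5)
Your proposal is correct and follows essentially the same route as the proof this paper relies on: the paper itself gives no argument for the lemma but defers entirely to \cite{S7}, where the classification is obtained exactly as you outline --- pass to the reduced group $\bar{G}=G/\langle\iota\rangle$ viewed as a finite subgroup of $PGL_2(k)$, enumerate the possible orbit decompositions of the six branch points under each candidate subgroup, and lift the surviving configurations through the central extension by $\langle\iota\rangle$, with Dickson's theorem supplying the extra $PGL_2(5)\cong S_5$ possibility that produces $2^+S_5$ in characteristic $5$. The points you single out as delicate (the ``upgrading'' of configurations, e.g.\ a six-point $A_4$-orbit being the octahedron, and distinguishing $D_8$ from $Q_8$ or $C_4\times C_2$ in the lift) are precisely the steps carried out in that reference, so no gap remains beyond executing the computations you describe.
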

For the proof of the above lemma and the description of each group see \cite{S7}. For the rest of this chapter, we
assume that $char(k)=0.$
One of the main goals of Section 2.4 is to describe each locus of genus 2 curves with fixed automorphism group in terms
of the fundamental theta constants.
We have the following lemma.
\begin{lem}\label{possibleCurve}
Every genus two curve can be written in the form:
\[
y^2 = x \, (x-1) \, \left(x - \frac {\T_1^2 \T_3^2} {\T_2^2  \T_4^2}\right)\, \left(x^2 \, -   \frac{\T_2^2 \, \T_3^2 +
\T_1^2 \, \T_4^2} { \T_2^2 \, \T_4^2} \cdot    \a  \, x + \frac {\T_1^2 \T_3^2} {\T_2^2 \T_4^2} \, \a^2 \right),
\]
where $\a = \frac {\T_8^2} {\T_{10}^2}$ can be given in terms of $\, \, \T_1, \T_2, \T_3,$ and $\T_4$,

\[
  \a^2 + \frac {\T_1^4 + \T_2^4 - \T_3^4 - \T_4^4}{\T_3^2 \T_4^2 - \T_1^2 \T_2^2  } \, \a + 1 =0.
\]
Furthermore,  if $\alpha = {\pm} 1$ then $V_4 \embd Aut(\X)$.
%\textbf{ is it an iff?}
\end{lem}
\proof  Let us write the genus 2 curve in the following form: $$Y^2 = X (X-1) (X-\l) (X-\mu) (X-\nu)$$ where $\l ,\mu
,\nu$ are given by Eq. \eqref{Picard}. Let $\a := \frac {\T_8^2} {\T_{10}^2}$.  Then,
\[
\begin{array}{ll}
\mu =  \frac{\T_3^2}{\T_4^2}\, \a,  &  \nu =  \frac{\T_1^2}{\T_2^2} \, \a.
\end{array}
\]
Using the following two identities,
%\begin{small}
\begin{equation}\label{Frobenius}
\begin{split}
\T_8^4 + \T_{10}^4 &= \T_1^4+\T_2^4-\T_3^4-\T_4^4, \\
\T_8^2 \T_{10}^2 &= \T_1^2 \T_2^2 - \T_3^2 \T_4^2
\end{split}
\end{equation}
%\end{small}
%
we have
\begin{equation}\label{rootof}
 \a^2 +    \frac {\T_1^4 + \T_2^4 - \T_3^4 - \T_4^4}{\T_3^2 \T_4^2 - \T_1^2 \T_2^2  } \,           \a + 1 = 0.
\end{equation}
If $\a=\pm 1$ then $\mu \nu = \l$.    It is well known that this implies that the genus 2 curve has an elliptic
involution. Hence,  $V_4 \embd Aut(\X)$.

\endproof
\begin{rem} {i)} From the above we have that $\T_8^4=\T_{10}^4$ implies that $V_4 \embd Aut(\X)$. Lemma \ref{lemma1}
determines a necessary and equivalent statement when $V_4 \embd Aut(\X)$.

{ii)}  The last part of Lemma 2.4 shows that if $\T_8^4=\T_{10}^4$, then all coefficients of the genus 2 curve are
given as rational functions of the four fundamental theta functions. Such fundamental theta functions determine the
field of moduli of the given curve. Hence, the curve is defined over its field of moduli.
\end{rem}

\begin{cor}
Let $\X$ be a genus 2 curve which has an elliptic involution. Then $\X$ is defined over its field of moduli.
\end{cor}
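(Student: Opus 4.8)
The plan is to obtain the corollary as an immediate consequence of Lemma~\ref{possibleCurve} and the remark following it; the only real work is to translate the hypothesis that $\X$ \emph{has an elliptic involution} into the algebraic condition $\a = \pm 1$ on the parameter $\a = \T_8^2/\T_{10}^2$ appearing there. Once that translation is made, Lemma~\ref{possibleCurve} supplies a model all of whose coefficients are rational functions of the fundamental theta constants, and the field-of-moduli conclusion of Remark (ii) finishes the argument.

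First I would recall the classical fact already used inside the proof of Lemma~\ref{possibleCurve}: writing $\X$ in the Rosenhain form \eqref{Rosen2}, the existence of an elliptic involution is equivalent to being able to order the six branch points so that $\mu\nu = \l$. Since $\X$ has an elliptic involution by hypothesis, I fix such an ordering. Next I substitute Picard's expressions \eqref{Picard} into $\mu\nu = \l$. Using $\mu = \frac{\T_3^2}{\T_4^2}\,\a$, $\nu = \frac{\T_1^2}{\T_2^2}\,\a$ and $\l = \frac{\T_1^2\T_3^2}{\T_2^2\T_4^2}$, one computes $\mu\nu = \a^2\,\l$, so the relation $\mu\nu = \l$ forces $\a^2 = 1$, i.e. $\a = \pm 1$, equivalently $\T_8^4 = \T_{10}^4$. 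This is precisely the converse of the implication established in Lemma~\ref{possibleCurve}, and it follows from the same short computation. With $\a = \pm 1$ in hand I invoke Lemma~\ref{possibleCurve}: because $\a$ is now a constant and no longer requires solving the quadratic \eqref{rootof}, the curve admits the model
\[ y^2 = x(x-1)\Bigl(x - \frac{\T_1^2\T_3^2}{\T_2^2\T_4^2}\Bigr)\Bigl(x^2 - \frac{\T_2^2\T_3^2 + \T_1^2\T_4^2}{\T_2^2\T_4^2}\,\a\, x + \frac{\T_1^2\T_3^2}{\T_2^2\T_4^2}\,\a^2\Bigr), \]
in which every coefficient is a rational function of the four fundamental theta constants $\T_1,\T_2,\T_3,\T_4$ alone. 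Finally I appeal to Remark (ii): the fundamental theta constants are determined by the isomorphism class of $\X$, hence they lie in its field of moduli, so the displayed model is defined over the field of moduli.

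The main obstacle is not the substitution or the appeal to Lemma~\ref{possibleCurve}, which are routine, but the legitimacy of the very last step. A priori the theta nulls depend on the chosen symplectic homology basis, so one must verify that the particular rational combinations of $\T_1,\T_2,\T_3,\T_4$ occurring as coefficients above are invariant under the relevant action of $Sp_4(\Z)$ and therefore descend to functions on $\A_2$ that genuinely lie in the field of moduli, rather than merely in some possibly larger field of definition. Making this precise — ideally by exhibiting the absolute invariants $i_1,i_2,i_3$ of $\X$ as rational functions of these fundamental theta constants, so that the field they generate is exactly the field of moduli — is the step that requires care, and it is where I would concentrate the detailed justification.
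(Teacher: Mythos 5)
Your proposal is correct and follows essentially the same route as the paper: the corollary there is drawn directly from Lemma~\ref{possibleCurve} and Remark (ii), exactly as you do it, with the elliptic involution yielding an ordering of branch points with $\mu\nu=\l$, hence $\a=\pm1$ and $\T_8^4=\T_{10}^4$, so that all coefficients of the model become rational functions of the four fundamental theta constants which determine the field of moduli. The $Sp_4(\Z)$-invariance concern you raise at the end is not resolved in the paper either --- Remark (ii) simply asserts that the fundamental theta constants determine the field of moduli --- so your argument is no less complete than the paper's own, and your explicit verification of the converse implication (elliptic involution $\Rightarrow\a=\pm1$) actually fills in a step the paper leaves implicit.
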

This was the main result of  \cite{Ca}.
\subsection{Describing the Locus of Genus Two Curves with Fixed Automorphism Group by Theta Constants}
The locus $\L_2$ of genus 2 curves $\X$ which have an elliptic involution is a closed subvariety of $\mathcal M_2$. Let
$W= \{\a_1, \a_2, \b_1, \b_2, \gamma_1, \gamma_2 \}$ be the set of roots of the binary sextic, and $A$ and $B$ be
subsets of $W$ such that $W=A \cup B$ and $| A \cap B | =2$. We define the cross ratio of the two pairs $z_1,z_2 ;
z_3,z_4 $ by
$$(z_1,z_2 ; z_3,z_4) = \frac{z_1 ; z_3,z_4}{z_2 ; z_3,z_4} = \frac{z_1-z_3}{z_1-z_4} : \frac{z_2-z_3}{z_2-z_4}.$$
Take $ A = \{ \a_1, \a_2, \b_1 , \b_2\}$ and $B = \{ \gamma_1, \gamma_2, \b_1, \b_2\}$.
Jacobi \cite{Krazer} gives a   description of $\L_2$ in terms of the cross ratios of the elements of $W$:
$$ \frac {\a_1-\b_1} {\a_1-\b_2} : \frac {\a_2-\b_1} {\a_2-\b_2}= \frac {\gamma_1-\b_1} {\gamma_1-\b_2} : \frac
{\gamma_2-\b_1} {\gamma_2-\b_2}.
$$
We recall that the following identities hold for cross ratios:
\[
%\begin{split}
(\a_1,\a_2\,;\b_1,\b_2)=(\a_2,\a_1;\b_2,\b_1)=(\b_1,\b_2;\a_1,\a_2)=(\b_2,\b_1;\a_2,\a_1)
\]
and
\[
(\a_1,\a_2;\infty,\b_2)=(\infty,\b_2;\a_1,\a_2)=(\b_2;\a_2,\a_1).
%\end{split}
\]
Next, we use this result to determine relations among theta functions for a genus 2 curve in the locus $\L_2$. Let $\X$
be any genus 2 curve given by the equation
$$Y^2=X(X-1)(X-a_1)(X-a_2)(X-a_3).$$
We take $\infty \in A \cap B$. Then there are five cases for $\a \in A \cap B $, where $\a $ is an element of the set $
 \{0,1, a_1, a_2, a_3\}$. For each of these cases there are three possible relationships for cross ratios as described below:\\
\noindent {i)} $A \cap B = \{ 0, \infty\}$: The possible cross ratios are
\begin{description}
\item $(a_1,1;\infty,0) = (a_3,a_2;\infty,0), \quad \quad (a_2,1;\infty,0) = (a_1,a_3;\infty,0),$ \\
\item $(a_1,1;\infty,0) =(a_2,a_3;\infty,0).$
\end{description}
\noindent {ii)} $A \cap B = \{ 1, \infty\}$: The possible cross ratios are
\begin{description}
\item $(a_1,0;\infty,1)=(a_2,a_3;\infty,1),\quad \quad (a_1,0;\infty,1)=(a_3,a_2;\infty,1),$\\
\item $(a_2,0;\infty,1)=(a_1,a_3;\infty,1).$
\end{description}
\noindent {iii)} $A \cap B = \{ a_1, \infty\}$: The possible cross ratios are
\begin{description}
\item $(1,0;\infty,a_1)=(a_3,a_2;\infty,a_1),\quad \quad (a_2,0;\infty,a_1)=(1,a_3;\infty,a_1),$\\
\item $(1,0;\infty,a_1)=(a_2,a_3;\infty,a_1).$
\end{description}
\noindent {iv)} $A \cap B= \{ a_2, \infty\}$: The possible cross ratios are
\begin{description}
\item $(1,0;\infty,a_2)=(a_1,a_3;\infty,a_2),\quad \quad (1,0;\infty,a_2)=(a_3,a_1;\infty,a_2),$\\
\item $(a_1,0;\infty,a_2)=(1,a_3;\infty,a_2).$
\end{description}
\noindent {v)} $A \cap B = \{ a_3, \infty\}$: The possible cross ratios are
\begin{description}
\item $(a_1,0;\infty,a_3)=(1,a_2;\infty,a_3),\quad \quad (1,0;\infty,a_3)=(a_2,a_1;\infty,
a_3),$\\
\item $(1,0;\infty,a_3)=(a_1,a_2;\infty,a_3).$
\end{description}

We summarize these relationships in Table 2.1.

\begin{lem}\label{lemma1}
Let $\X$ be a genus 2 curve. Then  $Aut(\X)\iso V_4$ if and only if the theta functions of $\X$ satisfy

%\begin{scriptsize}
\begin{equation}\label{V_4locus1}
\begin{split}
(\T_1^4-\T_2^4)(\T_3^4-\T_4^4)(\T_8^4-\T_{10}^4)
(-\T_1^2\T_3^2\T_8^2\T_2^2-\T_1^2\T_2^2\T_4^2\T_{10}^2+\T_1^4\T_3^2\T_{10}^2+ \T_3^2\T_2^4\T_{10}^2)\\
(\T_3^2\T_8^2\T_2^2\T_4^2-\T_2^2\T_4^4\T_{10}^2+\T_1^2\T_3^2\T_4^2\T_{10}^2-\T_3^4\T_2^2\T_{10}^2)
(-\T_8^4\T_3^2\T_2^2+\T_8^2\T_2^2\T_{10}^2\T_4^2\\
+\T_1^2\T_3^2\T_8^2\T_{10}^2
-\T_3^2\T_2^2\T_{10}^4)(-\T_1^2\T_8^4\T_4^2-\T_1^2\T_{10}^4\T_4^2+\T_8^2\T_2^2\T_{10}^2\T_4^2+\T_1^2\T_3^2\T_8^2\T_{10}^2)\\
(-\T_1^2\T_8^2\T_3^2\T_4^2+\T_1^2\T_{10}^2\T_4^4
+\T_1^2\T_3^4\T_{10}^2-\T_3^2\T_2^2\T_{10}^2\T_4^2)(-\T_1^2\T_8^2\T_2^2\T_4^2+\T_1^4\T_{10}^2\T_4^2\\
-\T_1^2\T_3^2\T_2^2\T_{10}^2+\T_2^4\T_4^2\T_{10}^2) (-\T_8^4\T_2^2\T_4^2
+\T_1^2\T_8^2\T_{10}^2\T_4^2-\T_2^2\T_{10}^4\T_4^2+\T_3^2\T_8^2\T_2^2\T_{10}^2)\\
(\T_1^4\T_8^2\T_4^2-\T_1^2\T_2^2\T_4^2\T_{10}^2-\T_1^2\T_3^2\T_8^2\T_2^2+\T_8^2\T_2^4\T_4^2)
 (\T_1^4\T_3^2\T_8^2-\T_1^2\T_8^2\T_2^2\T_4^2\\
-\T_1^2\T_3^2\T_2^2\T_{10}^2+\T_3^2\T_8^2\T_2^4)(\T_1^2\T_8^4\T_3^2-\T_1^2\T_8^2\T_{10}^2\T_4^2+\T_1^2\T_3^2\T_{10}^4
-\T_3^2\T_8^2\T_2^2\T_{10}^2)\\
(\T_1^2\T_8^2\T_4^4-\T_1^2\T_3^2\T_4^2\T_{10}^2+\T_1^2\T_3^4\T_8^2-\T_3^2\T_8^2\T_2^2\T_4^2)
  =0.
\end{split}
\end{equation}
%\end{scriptsize}
\end{lem}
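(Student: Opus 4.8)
The plan is to convert the group-theoretic condition into the cross-ratio relations already tabulated in cases (i)--(v), and then transport each relation to the theta side through Picard's formula \eqref{Picard}. First I would invoke Jacobi's description of $\L_2$: a genus $2$ curve with branch points $\{0,1,a_1,a_2,a_3,\infty\}$ admits an elliptic involution --- equivalently $V_4\embd \Aut(\X)$ --- exactly when one of the fifteen cross-ratio equalities listed above holds. Thus it suffices to show that the vanishing of the product in \eqref{V_4locus1} is equivalent to the disjunction of those fifteen equalities, once we set $a_1=\l$, $a_2=\mu$, $a_3=\nu$ with $\l,\mu,\nu$ given by \eqref{Picard}.

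The core is a uniform substitute-and-clear computation. Each listed equality is a rational identity in $a_1,a_2,a_3$; clearing its denominators gives a polynomial relation, and after inserting \eqref{Picard} and clearing the theta denominators $\T_2,\T_4,\T_{10}$ (nonzero on the locus where \eqref{Picard} is valid) it becomes a single homogeneous relation among the $\T_i$. I would run case (i), $A\cap B=\{0,\infty\}$, as the model: its three relations read $\nu=\l\mu$, $\l=\mu\nu$, $\mu=\l\nu$, and the substitution collapses them respectively to $\T_3^4=\T_4^4$, $\T_8^4=\T_{10}^4$, $\T_1^4=\T_2^4$, i.e.\ the three factors $(\T_3^4-\T_4^4)$, $(\T_8^4-\T_{10}^4)$, $(\T_1^4-\T_2^4)$ heading \eqref{V_4locus1}. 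Cases (ii)--(v), which involve the points $1,a_1,a_2,a_3$, give genuinely quartic relations and hence the eleven four-term parentheses; I would obtain these by the same elimination, most efficiently via a Gr\"obner-basis or resultant computation as in Step 4 of the Algorithm.

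To finish I would assemble the equivalence. On the locus $\T_2\T_4\T_{10}\neq 0$ the cleared numerator attached to each cross-ratio relation vanishes if and only if that relation holds, so the full product \eqref{V_4locus1} vanishes if and only if at least one relation holds, i.e.\ $\X\in\L_2$, i.e.\ $V_4\embd\Aut(\X)$. The refinement to $\Aut(\X)\iso V_4$ then uses Lemma \ref{possibleCurve} together with the classification in Lemma \ref{thm1}: the extra involution forces $V_4\embd\Aut(\X)$, while the strictly larger groups $D_8,D_{12},\dots$ arise only on the proper closed subloci where several of the factors vanish simultaneously.

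The step I expect to be the main obstacle is the bookkeeping for the middle cases: matching each of the eleven quartic parentheses to a specific relation, and reconciling the fifteen cross-ratio relations with the fourteen displayed factors (so one relation must reproduce a factor already produced by another). Equally delicate is controlling the boundary where some $\T_i=0$: there \eqref{Picard} degenerates, branch points collide or the hyperelliptic vanishing of Theorem \ref{vanishingProperty} intervenes, and one must check that clearing denominators introduces no spurious factor and that the product still cuts out exactly $\L_2$.
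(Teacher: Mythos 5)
Your proposal follows essentially the same route as the paper's own argument for this lemma: Jacobi's cross-ratio characterization of the elliptic-involution locus $\L_2$, with each of the fifteen cross-ratio relations turned into a theta relation by substituting Picard's formulas \eqref{Picard} and clearing the denominators $\T_2,\T_4,\T_{10}$, exactly as in the case list and Table 2.1 preceding the lemma --- including your correct observation that two of the fifteen relations collapse to the single factor $(\T_8^4-\T_{10}^4)$, leaving fourteen factors. The paper later gives a second, different proof (in Theorem~\ref{theorem1}, by factoring the invariant-theoretic equation of $\L_2$), but your plan reproduces the proof the lemma itself rests on, and it is also appropriately more careful than the paper about the distinction between $V_4\iso\Aut(\X)$ and $V_4\embd\Aut(\X)$.
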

However, we are unable to determine a similar result for cases $D_8$ or  $D_{12}$ by this argument. Instead, we will
use the invariants of genus 2 curves and a more computational approach. In the process, we will offer a different proof
for the lemma above.
%
%\begin{tiny}
\begin{center}
\begin{table}\label{tab_1}
\caption{Relation of theta functions and cross ratios}
\begin{tabular}{|c|c|c|c|}
\hline & Cross ratio &  $f(a_1,a_2,a_3)=0$ & theta constants \tabularnewline[8pt]
\hline
1& $(1,0;\infty,a_1)=(a_3,a_2;\infty,a_1)$ & $a_1a_2+a_1-a_3a_1-a_2$ &
$-\T_1^2\T_3^2\T_8^2\T_2^2-\T_1^2\T_2^2\T_4^2\T_{10}^2+$\\

& & &$\T_1^4\T_3^2\T_{10}^2+ \T_3^2\T_2^4\T_{10}^2$ \tabularnewline[4pt]
\hline
2 & $(a_2,0;\infty,a_1)=(1,a_3;\infty,a_1)$ & $a_1a_2-a_1+a_3a_1-a_3a_2$ &
$\T_3^2\T_8^2\T_2^2\T_4^2-\T_2^2\T_4^4\T_{10}^2+$\\
& && $\T_1^2\T_3^2\T_4^2\T_{10}^2-\T_3^4\T_2^2\T_{10}^2$ \tabularnewline[4pt]
\hline
3& $(1,0;\infty,a_1)=(a_2,a_3;\infty,a_1)$ & $a_1a_2-a_1-a_3a_1+a_3$ & $-\T_8^4\T_3^2\T_2^2+\T_8^2\T_2^2\T_{10}^2\T_4^2+$ \\
& &&$\T_1^2\T_3^2\T_8^2\T_{10}^2-\T_3^2\T_2^2\T_{10}^4$ \tabularnewline[4pt]
\hline
4& $(1,0;\infty,a_2)=(a_1,a_3;\infty,a_2)$ & $a_1a_2-a_2-a_3a_2+a_3$ & $-\T_1^2\T_8^4\T_4^2-\T_1^2\T_{10}^4\T_4^2+$ \\
& &&$\T_8^2\T_2^2\T_{10}^2\T_4^2+\T_1^2\T_3^2\T_8^2\T_{10}^2$ \tabularnewline[4pt]
\hline
5& $(1,0;\infty,a_2)=(a_3,a_1;\infty,a_2)$ & $a_1a_2-a_1+a_2-a_3a_2$&$-\T_1^2\T_8^2\T_3^2\T_4^2+\T_1^2\T_{10}^2\T_4^4+$\\
&&&$\T_1^2\T_3^4\T_{10}^2-\T_3^2\T_2^2\T_{10}^2\T_4^2$ \tabularnewline[4pt]
\hline
6 & $(a_1,0;\infty,a_2)=(1,a_3;\infty,a_2)$ & $a_1a_2-a_3a_1-a_2+a_3 a_2$ &
$-\T_1^2\T_8^2\T_2^2\T_4^2+\T_1^4\T_{10}^2\T_4^2-$ \\
& &&$\T_1^2\T_3^2\T_2^2\T_{10}^2+\T_2^4\T_4^2\T_{10}^2$ \tabularnewline[4pt]
\hline
7&$(a_1,0;\infty,a_3)=(1,a_2;\infty,a_3)$ & $a_1a_2-a_3a_1-a_3a_2+a_3$ &
$-\T_8^4\T_2^2\T_4^2+\T_1^2\T_8^2\T_{10}^2\T_4^2-$ \\
&&&$\T_2^2\T_{10}^4\T_4^2+\T_3^2\T_8^2\T_2^2\T_{10}^2$ \tabularnewline[4pt]
\hline
8&$(1,0;\infty,a_3)=(a_2,a_1;\infty, a_3)$&$a_3a_1-a_1-a_3a_2+a_3$ & $\T_8^4-\T_{10}^4$\tabularnewline[4pt]
\hline
9&$(1,0;\infty,a_3)=(a_1,a_2;\infty,a_3)$ &$ a_3a_1+a_2-a_3-a_3a_2$ &
$\T_1^4\T_8^2\T_4^2-\T_1^2\T_2^2\T_4^2\T_{10}^2-$ \\
&&&$\T_1^2\T_3^2\T_8^2\T_2^2+\T_8^2\T_2^4\T_4^2$ \tabularnewline[4pt]
\hline
10&$(a_1,0;\infty,1)=(a_2,a_3;\infty,1)$ & $-a_1+a_3a_1+a_2-a_3 $&$\T_1^4\T_3^2\T_8^2-\T_1^2\T_8^2\T_2^2\T_4^2-$\\
&&&$\T_1^2\T_3^2\T_2^2\T_{10}^2+\T_3^2\T_8^2\T_2^4$ \tabularnewline[4pt]
\hline
11& $(a_1,0;\infty,1)=(a_3,a_2;\infty,1)$ &$ a_1a_2-a_1-a_2+a_3$&$\T_1^2\T_8^4\T_3^2-\T_1^2\T_8^2\T_{10}^2\T_4^2+$\\
&&&$\T_1^2\T_3^2\T_{10}^4-\T_3^2\T_8^2\T_2^2\T_{10}^2$ \tabularnewline[4pt]
\hline
12& $(a_2,0;\infty,1)=(a_1,a_3;\infty,1)$&$a_1-a_2+a_3a_2-a_3$&$\T_1^2\T_8^2\T_4^4-\T_1^2\T_3^2\T_4^2\T_{10}^2+$ \\
&&&$\T_1^2\T_3^4\T_8^2-\T_3^2\T_8^2\T_2^2\T_4^2$ \tabularnewline[4pt]
\hline
13&$(a_1,1;\infty,0) = (a_3,a_2;\infty,0)$ & $a_1a_2-a_3$ & $\T_8^4-\T_{10}^4 $ \tabularnewline[4pt]
\hline
14& $(a_2,1;\infty,0) = (a_1,a_3;\infty,0)$ & $a_1-a_3a_2$ &$ \T_3^4-\T_4^4$ \tabularnewline[4pt]
\hline
15&$(a_1,1;\infty,0) = (a_2,a_3;\infty,0)$ & $a_3a_1-a_2$ & $ \T_1^4-\T_2^4  $ \tabularnewline[4pt] \hline
\end{tabular}
%\vspace{1cm} \caption{Relation of theta functions and cross ratios}
\end{table}
\end{center}
%\end{tiny}
%\clearpage
%
\begin{lem}
{i)} The locus $\L_2$ of genus 2 curves $\X$ which have a degree 2 elliptic subcover is a closed subvariety of
$\mathcal M_2$. The equation of $\L_2$ is given by
%\end{scriptsize}
\[
\begin{split}\label{eq_L2_J}
8748J_{10}J_2^4J_6^2- 507384000J_{10}^2J_4^2J_2-19245600J_{10}^2J_4J_2^3-6912J_4^3J_6^{34}\\
-592272J_{10}J_4^4J_2^2 +77436J_{10}J_4^3J_2^4 -3499200J_{10}J_2J_6^3+4743360J_{10}J_4^3J_2J_6\\
-870912J_{10}J_4^2J_2^3J_6
+3090960J_{10}J_4J_2^2J_6^2 -78J_2^5J_4^5-125971200000J_{10}^3\\
-81J_2^3J_6^4+1332J_2^4J_4^4J_6 +384J_4^6J_6+41472J_{10}J_4^5+159J_4^6J_2^3  &\\
 -47952J_2J_4J_6^4
 +104976000J_{10}^2J_2^2J_6-1728J_4^5J_2^2J_6+6048J_4^4J_2J_6^2\\
   \end{split}
\]
\begin{equation}
\begin{split}
 -9331200J_{10}J_4^2J_6^2 -J_2^7J_4^4 +12J_2^6J_4^3J_6+29376J_2^2J_4^2J_6^3-8910J_2^3J_4^3J_6^2\\
 -2099520000J_{10}^2J_4J_6+31104J_6^5 -5832J_{10}J_2^5J_4J_6  -54J_2^5J_4^2J_6^2 \\
-236196J_{10}^2J_2^5-80J_4^7J_2 +108J_2^4J_4J_6^3 +972J_{10}J_2^6J_4^2 = & 0.
\end{split}
\end{equation}

{ii)} The locus  of genus 2 curves $\X$ with $Aut(\X)\iso D_8$ is given by the equation of $\L_2$  and
%\begin{small}
\begin{equation}
\label{D_8_locus} 1706J_4^2J_2^2+2560J_4^3+27J_4J_2^4-81J_2^3J_6-14880J_2J_4J_ 6+28800J_6^2 =0.
\end{equation}
%\end{small}
%
{iii)} The locus  of genus 2 curves $\X$ with $Aut(\X)\iso D_{12}$ is
%
%\begin{small}
\begin{equation}
\label{D_12_locus}
\begin{split}
-J_4J_2^4+12J_2^3J_6-52J_4^2J_2^2+80J_4^3+960J_2J_4J_6-3600
J_6^2 &=0, \\
864J_{10}J_2^5+3456000J_{10}J_4^2J_2-43200J_{10}J_4J_2^3-
2332800000J_{10}^2\\
-J_4^2J_2^6 -768J_4^4J_2^2+48J_4^3J_2^4+4096J_4^5 &=0.\\
\end{split}
\end{equation}
%\end{small}

\end{lem}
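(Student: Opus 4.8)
The plan is to prove all three parts by reducing everything to the invariant theory of binary sextics together with a parameter‑elimination, rather than through theta functions; this is the "more computational approach" announced just before the statement.

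First I would establish the geometric characterization underlying part (i): a genus $2$ curve $\X$ has a degree $2$ elliptic subcover precisely when it carries an involution other than the hyperelliptic one, i.e. when $V_4 \embd \Aut(\X)$. This is exactly the condition isolated in Lemma~\ref{possibleCurve} (the case $\alpha = \pm 1$) and placed in the classification of Lemma~\ref{thm1}; see \cite{S7}. Since the six branch points of such a curve are stable under the induced involution of $\P^1$, which we may normalize to $X \mapsto -X$, the roots come in pairs $\pm a, \pm b, \pm c$ and the curve takes the two‑parameter normal form
\[ Y^2 = X^6 - s_1 X^4 + s_2 X^2 - 1, \]
the quotient $U = X^2$ then exhibiting the elliptic subcover. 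Two free parameters match $\dim \L_2 = 2$, so $\L_2$ is a hypersurface in $\M_2$ and ought to be cut out by a single relation.

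Next I would compute the Igusa invariants $J_2, J_4, J_6, J_{10}$ of this normal form as explicit polynomials in $s_1, s_2$ (from the definitions in \cite{S7}), so that the three absolute invariants $i_1, i_2, i_3$ become three functions of the two parameters and hence satisfy one algebraic relation. Clearing denominators and homogenizing in the $J_{2i}$ — that is, eliminating $s_1, s_2$ by resultants or a Gr\"obner basis — produces one weighted‑homogeneous polynomial relation $F(J_2,J_4,J_6,J_{10}) = 0$, which is the displayed equation of $\L_2$. This locus is closed, being the zero set of a polynomial, and the irreducibility of $\L_2$ asserted in the introduction guarantees that the elimination ideal carries no spurious components, proving (i). For parts (ii) and (iii) I would then impose the extra symmetry that enlarges $\Aut(\X)$ past $V_4$. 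By the classification the curve has $\Aut(\X) \iso D_8$ exactly when the reduced group $\bar G = \Aut(\X)/\langle \iota \rangle$ is the Klein four group, and $\Aut(\X) \iso D_{12}$ exactly when $\bar G \iso S_3$; equivalently the branch configuration acquires the further $PGL_2$‑symmetry $X \mapsto 1/X$ (forcing $s_1 = s_2$) in the $D_8$ case and an order‑three symmetry in the $D_{12}$ case. Substituting these conditions into the invariants and eliminating the remaining parameter gives, for $D_8$, the $\L_2$ relation together with the single extra equation \eqref{D_8_locus}, and for $D_{12}$ the two equations \eqref{D_12_locus}; each resulting locus is one‑dimensional, as it must be since $D_8$ and $D_{12}$ occur along curves in $\M_2$.

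The main obstacle is computational rather than conceptual. The elimination yielding the $\L_2$ equation is a large resultant/Gr\"obner‑basis calculation — the relation has on the order of forty terms of high weighted degree — and so is best carried out in a computer algebra system. The delicate part is verifying that the computed ideals cut out exactly the intended strata: that the equations are reduced, that no extraneous factors are introduced by clearing denominators (the discriminant $J_{10} \neq 0$ and $J_2 \neq 0$ must be tracked), and that the smaller exceptional loci $C_{10}$, $C_3 \sem D_8$, and $GL_2(3)$, which carry still larger automorphism groups, sit consistently inside the closures of the $D_8$ and $D_{12}$ loci. Confirming irreducibility and correctly handling these degenerations is where the genuine care is required.
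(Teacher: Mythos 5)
Be aware that the paper offers no proof of this lemma at all: the result is quoted as known, coming from Shaska and V\"olklein \cite{S7} (the same source credited for the classification lemma just above it), and it is then used only as input in the proof of Theorem~\ref{theorem1}. Your proposal is therefore not an alternative to an argument in the paper but a reconstruction of the proof in the original source --- and it is essentially the right reconstruction: there too one identifies $\L_2$ with the locus of curves admitting an involution besides the hyperelliptic one, passes to the two-parameter normal form $Y^2 = X^6 - s_1X^4 + s_2X^2 - 1$ (equivalently, to the dihedral invariants of the pair $(s_1,s_2)$), computes $J_2, J_4, J_6, J_{10}$ as polynomials in $s_1, s_2$, and eliminates the parameters; the $D_8$ and $D_{12}$ equations come from the further degenerations you describe ($s_1^3 = s_2^3$, i.e. $s_1 = s_2$ up to twist, for $D_8$; the one-parameter form $Y^2 = X^6 + aX^3 + 1$ for $D_{12}$, which is why part (iii) is cut out by two equations).

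Two defects in your write-up should be repaired. First, your closedness argument for part (i) is circular: being ``the zero set of a polynomial'' makes $V(F)$ closed, where $F$ is your eliminant, but elimination only tells you that $V(F) = \overline{\L_2}$, the Zariski closure of the image of the parameter map; you still owe the equality $\L_2 = \overline{\L_2}$. Either invoke the standard fact that loci of curves with prescribed automorphisms are closed in $\M_2$ (automorphism groups can only jump up under specialization), or argue as the paper itself does later when proving Theorem~\ref{theorem1}(i): substituting the invariants of $y^2 = x(x-1)(x-a_1)(x-a_2)(x-a_3)$ into the big equation and factoring yields exactly the fifteen cross-ratio factors of Table 2.1, each of which is equivalent to the existence of an elliptic involution, and this gives both inclusions at once. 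Second, in your list of degenerate strata to monitor, the $C_{10}$ curve does not belong: $C_{10}$ contains a unique involution (the hyperelliptic one), so curves with this automorphism group admit no degree $2$ elliptic subcover and lie outside $\L_2$ altogether; the exceptional points actually sitting on the $D_8$ and $D_{12}$ curves are those with groups $GL_2(3)$ and $C_3 \sem D_8$, respectively.
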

%&&&&&&&&&&&&&&&&&&&&&&&&&&&&&&&&&
%
Our goal is to express each of the above loci in terms of the theta characteristics. We obtain the following result.
\begin{thm}\label{theorem1}
 Let $\X$ be a genus 2 curve. Then the following hold:

{i)}  $Aut(\X)\iso V_4$ if and only if the relations of theta functions given Eq.~\eqref{V_4locus1} holds.

{ii)} $Aut(\X)\iso D_8$ if and only if the Eq. I in \cite{web}  is satisfied.

{iii)} $Aut(\X)\iso D_{12}$ if and only if the Eq. II and Eq. III in \cite{web} are satisfied.
\end{thm}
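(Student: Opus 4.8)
The plan is to translate the descriptions of the three loci in terms of Igusa invariants---given in the preceding lemma and in \eqref{D_8_locus}, \eqref{D_12_locus}---into conditions on the fundamental theta constants, using Picard's Lemma as the bridge. Concretely, I would write a genus $2$ curve in Rosenhain form $Y^2 = X(X-1)(X-\l)(X-\mu)(X-\nu)$ and recall the classical expressions of $J_2,J_4,J_6,J_{10}$ as symmetric polynomials in the six branch points $0,1,\l,\mu,\nu,\infty$. Substituting the theta quotients \eqref{Picard} then writes every $J_{2i}$ as a rational function of $\T_1,\T_2,\T_3,\T_4,\T_8,\T_{10}$, where $\T_8,\T_{10}$ are further constrained by \eqref{Frobenius}. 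This reduces each locus condition to a polynomial identity among theta constants, to be cleared of denominators and simplified.

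For part (i), I would first give the elementary cross-ratio proof. A genus $2$ curve lies in $\L_2$, i.e.\ admits an extra involution and hence satisfies $V_4 \embd \Aut(\X)$, exactly when the six branch points can be paired so that Jacobi's cross-ratio identity holds; running through the five choices for $A\cap B$ produces the fifteen polynomial conditions $f(a_1,a_2,a_3)=0$ collected in column three of Table 2.1. Applying Picard's Lemma to each $f$ turns it into the corresponding theta expression in column four, and forming the product over all cases yields exactly \eqref{V_4locus1}. Thus \eqref{V_4locus1} holds iff $V_4 \embd \Aut(\X)$, which re-proves Lemma~\ref{lemma1}; the generic curve in this locus has $\Aut(\X)\iso V_4$, while the proper sub-loci $D_8$ and $D_{12}$ are cut out below.

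For parts (ii) and (iii) the cross-ratio argument no longer suffices, so I would use the invariant descriptions directly. Substituting the theta quotients \eqref{Picard} into the $D_8$ equation \eqref{D_8_locus} and simplifying with the help of \eqref{Frobenius} and the six-term G\"opel identities should collapse it to a single relation among the theta constants, namely the equation labelled Eq.~I in \cite{web}; doing the same for the two equations \eqref{D_12_locus} gives Eq.~II and Eq.~III. In each case one must remember that the $D_8$ (resp.\ $D_{12}$) locus is the intersection of the corresponding $J$-invariant equation(s) with $\L_2$, so the resulting theta condition is to be read together with \eqref{V_4locus1}.

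The main obstacle is entirely computational: the $\L_2$ equation in the $J_{2i}$ is a polynomial of very high degree, and after the Picard substitution one faces a large rational function in six theta constants that must be shown to factor as the product in \eqref{V_4locus1} (and analogously for $D_8$, $D_{12}$). Carrying this out requires Gr\"obner-basis elimination and careful bookkeeping of the sign and ordering choices implicit in Picard's Lemma---the two roots $\a=\T_8^2/\T_{10}^2$ of the quadratic in Lemma~\ref{possibleCurve}, and the labelling of the odd branch-point set $U$ in Lemma~\ref{Thomae}---since different admissible choices permute the fifteen factors among themselves. Verifying that the symbolic output matches the stated factored forms, rather than merely an equivalent ideal, is where the real work lies.
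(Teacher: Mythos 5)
Your proposal is correct in substance and rests on the same bridge the paper uses---Picard's formulas \eqref{Picard} plus the Frobenius relations \eqref{Frobenius} to translate invariant-theoretic descriptions of the loci into theta relations---but the execution differs from the paper's in two places worth noting. For part (i), your cross-ratio argument (Jacobi's pairing condition giving the fifteen factors of Table 2.1, each converted to a theta expression) is exactly the paper's proof of Lemma~\ref{lemma1}, which the theorem simply cites for part (i); the paper's proof inside the theorem runs in the opposite direction: it expresses $J_2,J_4,J_6,J_{10}$ in terms of $a_1,a_2,a_3$, substitutes into the $\L_2$ equation \eqref{eq_L2_J}, and \emph{factors} the result, recovering the fifteen cross-ratio polynomials as \eqref{L2_factored} and only then passing to theta constants via Table 2.1. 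For parts (ii) and (iii), the paper does not substitute the Picard quotients into \eqref{D_8_locus} and \eqref{D_12_locus} in full generality as you propose; it first uses the elliptic involution to put the curve in the form $Y^2=X(X-1)(X-a_1)(X-a_2)(X-a_1a_2)$, so that the $D_8$ and $D_{12}$ equations become conditions in $a_1,a_2$ alone, while the normal-form constraint $a_3=a_1a_2$ itself translates into the simple theta relation $\T_4^4=\T_3^4$. This reduction is what keeps the symbolic computation manageable and produces the clean auxiliary relation; your direct elimination over all six theta constants, intersected with \eqref{V_4locus1}, describes the same locus but at substantially greater computational cost, and---as you yourself flag---would only be guaranteed to produce an ideal equivalent to, rather than literally identical with, the equations I--III of \cite{web}.
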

\proof Part {i)} of the theorem is Lemma~\ref{lemma1}. Here we give a somewhat different proof. Assume that $\X$ is a
genus 2 curve with equation
$$Y^2=X(X-1)(X-a_1)(X-a_2)(X-a_3)$$
whose classical invariants satisfy Eq.~\eqref{eq_L2_J}. Expressing the classical invariants of $\X$ in terms of $a_1,
a_2, a_3$, substituting them into \eqref{eq_L2_J}, and factoring the resulting equation yields
\medskip
%\begin{small}
\begin{equation}
\begin{split}\label{L2_factored}
(a_1a_2-a_3)^2 (a_1-a_3a_2)^2  (a_3a_1-a_2)^2 (a_1a_2-a_2-a_3a_2+a_3)^2\\
(a_3a_1+a_2-a_3-a_3a_2)^2(-a_1+a_3a_1+a_2-a_3)^2(a_1a_2-a_1-a_2+a_3)^2\\
(a_1a_2-a_1+a_3a_1-a_3a_2)^2(a_1a_2-a_3a_1-a_3a_2+a_3)^2\\
(a_3a_1-a_1-a_3a_2+a_3)^2(a_1a_2+a_1-a_3a_1-a_2)^2\\
(a_1a_2-a_1-a_3a_1+a_3)^2 (a_1a_2-a_1+a_2-a_3a_2)^2\\
(a_1-a_2+a_3a_2-a_3)^2(a_1a_2-a_3a_1-a_2+a_3 a_2)^2  =&\, 0.
\end{split}
\end{equation}
%\end{small}
%
It is no surprise that we get the 15 factors of Table 2.1. The relations of theta constants follow from Table 2.1.

{ii)} Let $\X$ be a genus 2 curve which has an elliptic involution. Then $\X$ is isomorphic to a curve with the
equation
$$Y^2=X(X-1)(X-a_1)(X-a_2)(X-a_1 a_2).$$
If $\Aut (\X) \iso D_8$ then the $SL_2 (k)$-invariants of such curve must satisfy Eq.~\eqref{D_8_locus}. Then, we get
the equation in terms of $a_1$ and $a_2$.
%Substituting, $a_1, a_2$ in terms of the theta constants we get the rest.
By writing the relation $a_3 = a_1 a_2$ in terms of theta constants, we get $\T_4^4 = \T_3^4$. All the results above
lead to part ii) of the theorem.
{iii)} The proof of this part is similar to part ii).
\endproof
We express the conditions of the previous lemma in terms of the fundamental theta constants only.
\begin{lem}
Let $\X$ be a genus 2 curve. Then we have the following:
%
%\begin{description}
%\item [i)]

\noindent {i)} $V_4 \hookrightarrow Aut(\X)$ if and only if the fundamental theta constants of $\X$ satisfy
%%%
%\begin{small}
\begin{equation}\label{V_4locus2}
\begin{split}
( \theta_{{3}}^4-\theta_{{4}}^4 )  (\theta_{{1}}^4 -\theta_{{3}}^4 ) ( \theta_{{2}}^4-\theta_{{4}}^4 ) ( \theta_{{1}}^4
-\theta_{{4}}^4 )  ( \theta_{{3}}^4-\theta_{{2}}^4 ) ( \theta_{{1}}^4- \theta_{{2}}^4 ) \\
( -\theta_{{4}}^2+\theta_{{3}}^2 +\theta_{{1}}^2-\theta_{{2}}^2 )(
 \theta_{{4}}^2-\theta_{{3}}^2+\theta_{{1}}^2-\theta_{{2}}^2
 )  ( -\theta_{{4}}^2-\theta_{{3}}^2+\theta_{{2}}^2+\theta_{{
1}}^2 ) \\
( \theta_{{4}}^2+\theta_{{3}}^2+\theta_{{2}}^2+\theta_ {{1}}^2 ) ( {\theta_{{1}}}^{4}{\theta_{{2}}}^{4}+
{\theta_{{3}}}^{4}{\theta_{{2}}}^{4} +{\theta_{{1}}}^{4}{\theta_{{3}}}^{4} -2\,\theta_{{1}}^2\theta_{{2}}^2\theta
_{{3}}^2\theta_{{4}}^2 )\\
 \left( -{\theta_{{3}}}^{4}{\theta_{{2}}}^{4}-{
\theta_{{2}}}^{4}{\theta_{{4}}}^{4}-{\theta_{{3}}}^{4}{\theta_{{4}}}^{4} + 2\,\theta_{{1}}^2\theta_{{2}}^2\theta_
{{3}}^2\theta_{{4}}^2 \right)( {\theta_{{2}}}^{4}{\theta_{{4}}}^{4} +{\theta_{{1}}}^{4}{\theta _{{2}}}^{4} +{
\theta_{{1}}}^{4}{\theta_{{4}}}^{4}\\
-2\,\theta_{{1}}^2\theta_{{2}}^2\theta_{{3}}^2\theta_{{4}}^2 )  \left(
{\theta_{{1}}}^{4}{\theta_{{4}}}^{4}+{\theta_{{3}}}^{4}{\theta_{{4}}}^{4}+{\theta_{{1}}}^{4}{\theta_{ {3}}}^{4}
-2\,\theta_{{1 }}^2\theta_{{2}}^2\theta_{{3}}^2\theta_{{4}}^2\right)  = & \, 0.\\
\end{split}
\end{equation}
%\end{small}
%
%\item [ii]
\noindent {ii)} $D_8 \hookrightarrow Aut(\X)$ if and only if the fundamental theta constants of $\X$ satisfy Eq.(3) in
\cite{Sh}.

%\item [iii]
\noindent {iii)} $D_6 \hookrightarrow Aut(\X)$ if and only if the fundamental theta constants of $\X$ satisfy Eq.(4) in
\cite{Sh}.

%\end{description}
\end{lem}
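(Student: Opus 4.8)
The plan is to descend the characterisations already obtained in Lemma~\ref{lemma1} and Theorem~\ref{theorem1}, which involve $\T_1,\dots,\T_4,\T_8,\T_{10}$, to conditions in the four fundamental theta constants alone by eliminating $\T_8$ and $\T_{10}$. The only inputs needed are the two identities of the first G\"opel group recorded in Eq.~\eqref{Frobenius},
\[
\T_8^2\T_{10}^2=\T_1^2\T_2^2-\T_3^2\T_4^2,\qquad \T_8^4+\T_{10}^4=\T_1^4+\T_2^4-\T_3^4-\T_4^4,
\]
which fix exactly the two elementary symmetric functions $e_2:=\T_8^2\T_{10}^2$ and $e_1^2:=\T_8^4+\T_{10}^4+2\T_8^2\T_{10}^2$ of the pair $(\T_8^2,\T_{10}^2)$. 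Conceptually the elimination must succeed because membership in $\L_2$ is a $GL_2(k)$-invariant property of $\X$, so by Lemma~\ref{possibleCurve} it cannot distinguish the two roots $\a$ and $1/\a$ of Eq.~\eqref{rootof}; since interchanging these roots is precisely the interchange $\T_8^2\leftrightarrow\T_{10}^2$, the locus condition is symmetric in $\T_8^2,\T_{10}^2$ and hence a function of $e_1,e_2$ alone. The two computations that drive everything are
\[
(\T_8^2+\T_{10}^2)^2=(\T_1^2+\T_2^2+\T_3^2+\T_4^2)(\T_1^2+\T_2^2-\T_3^2-\T_4^2),
\]
\[
(\T_8^2-\T_{10}^2)^2=(\T_1^2-\T_2^2+\T_3^2-\T_4^2)(\T_1^2-\T_2^2-\T_3^2+\T_4^2),
\]
both immediate from the identities above; multiplying them shows that $(\T_8^4-\T_{10}^4)^2$ equals the product of the four linear-in-squares factors of Eq.~\eqref{V_4locus2}, which is how the single factor $\T_8^4-\T_{10}^4$ of Eq.~\eqref{V_4locus1} accounts for them.

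For part (i) I would first check, by inspecting the fourteen factors of Eq.~\eqref{V_4locus1}, that their product is invariant under $\T_8^2\leftrightarrow\T_{10}^2$ and is odd under the sign change $(\T_8^2,\T_{10}^2)\mapsto(-\T_8^2,-\T_{10}^2)$, so that it splits as $(\T_8^2+\T_{10}^2)\cdot R(e_1^2,e_2;\T_1,\dots,\T_4)$ with $R$ a genuine polynomial in $e_1^2$ and $e_2$. The branch $\T_8^2+\T_{10}^2=0$ is equivalent to $e_1^2=0$, i.e. to the vanishing of the product of factors $(\T_1^2+\T_2^2+\T_3^2+\T_4^2)(\T_1^2+\T_2^2-\T_3^2-\T_4^2)$, while $R=0$ is already a condition in the fundamental theta constants once the boxed values of $e_1^2$ and $e_2$ are substituted. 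Carrying out this substitution and factoring, I would verify that the irreducible factorisation of the result is exactly the fourteen-factor product on the left of Eq.~\eqref{V_4locus2}, matching the six fourth-power differences, the four linear-in-squares factors, and the four irreducible quartics term by term.

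For parts (ii) and (iii) the same elimination is applied to the smaller loci. A curve with $\Aut(\X)\iso D_8$ is, in the model $Y^2=X(X-1)(X-a_1)(X-a_2)(X-a_1a_2)$ used in the proof of Theorem~\ref{theorem1}, subject to the branch-point relation $a_3=a_1a_2$, which translates into $\T_3^4=\T_4^4$; imposing this together with Eq.~\eqref{V_4locus2} and eliminating $\T_8,\T_{10}$ yields the $D_8$ condition recorded as Eq.~(3) of \cite{Sh}. For part (iii) one starts instead from the two equations of the locus Eq.~\eqref{D_12_locus}, re-expresses the Igusa invariants through $\T_1,\dots,\T_4$ and $\a$ via Lemma~\ref{possibleCurve}, reduces modulo the quadratic Eq.~\eqref{rootof} (equivalently substitutes $e_1^2,e_2$), and so obtains Eq.~(4) of \cite{Sh}.

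The hard part is not the conceptual descent but the symbolic bookkeeping. Because the individual factors of Eq.~\eqref{V_4locus1} are odd, not symmetric, in $\T_8^2,\T_{10}^2$, they cannot be reduced one at a time; one must either pair each asymmetric factor with its image under $\T_8^2\leftrightarrow\T_{10}^2$ or reduce the whole product at once, all the while keeping $e_1$ only up to sign since the identities pin down only $e_1^2$. Executing this with Gr\"obner-basis elimination, discarding the extraneous non-vanishing factors introduced by squaring, and recognising the resulting large polynomial as the fully factored Eq.~\eqref{V_4locus2} (in particular its four irreducible quartic factors), is where essentially all of the effort lies.
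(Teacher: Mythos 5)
Your proposal is correct and follows essentially the same route as the paper: both start from Eq.~\eqref{V_4locus1} (Lemma~\ref{lemma1}/Theorem~\ref{theorem1}), eliminate $\T_8,\T_{10}$ using the two G\"opel identities of Eq.~\eqref{Frobenius}, reduce the rest to symbolic computation, and treat parts ii) and iii) analogously via $\T_3^4=\T_4^4$ and the $D_{12}$ locus equations. The one ingredient you assert but do not justify---that the extraneous factors produced by the elimination are nonvanishing and may be discarded---is supplied in the paper by expressing $J_{10}$ in theta constants as a nonzero multiple of $(\T_1^2\T_2^2-\T_3^2\T_4^2)^{12}(\T_1^2\T_4^2-\T_2^2\T_3^2)^{12}(\T_1^2\T_3^2-\T_2^2\T_4^2)^{12}$, so that smoothness ($J_{10}\neq 0$) forces exactly those spurious factors to be nonzero.
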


\proof Notice that Eq.~\eqref{V_4locus1} contains only $\T_1, \T_2, \T_3, \T_4, \T_8$ and $\T_{10}.$ Using
Eq.~\eqref{Frobenius}, we can eliminate $\T_8$ and $\T_{10}$ from Eq.~\eqref{V_4locus1}.
The $J_{10}$ invariant of any genus two curve is  given by the following in terms of theta constants:
\[ J_{10} = \frac{\T_1^{12} \T_3^{12}}{\T_2^{28} \T_4^{28} \T_{10}^{40}} \, (\T_1^2\T_2^2 - \T_3^2 \T_4^2)^{12} (\T_1^2\T_4^2 - \T_2^2
\T_3^2)^{12} (\T_1^2\T_3^2 - \T_2^2 \T_4^2)^{12}.\] Since $J_{10} \neq 0,$ the factors $(\T_1^2\T_2^2 - \T_3^2 \T_4^2),
(\T_1^2\T_4^2 - \T_2^2 \T_3^2)$ and $(\T_1^2\T_3^2 - \T_2^2 \T_4^2)$ cancel in the equation of the $V_4$ locus. The
result follows from Theorem ~\ref{theorem1}.  The proof of part ii) and iii) is similar and we avoid details.
\endproof
\begin{rem}
For part ii) and iii), the equations are lengthy and we don't show them here. But by using the extra conditions $\T_4^2
= \T_3^2$ or $\T_4^2 = - \T_3^2$, we could simplify the equation of the $D_8$ locus as follows:

\noindent {i)}When $\T_4^2 = \T_3^2$, we have
%
%\begin{small}
\begin{equation}
\begin{split}
 ( \theta_{{1}}^4-\theta_{{2}}^4 ) ( \theta_{{1}}^2\theta_{{2}}^2-{\theta_{{3}}}^{4} )  ( {
\theta_{{2}}}^{2}+{\theta_{{1}}}^{2}+2\,{\theta_{{3}}}^{2} )  (
{\theta_{{2}}}^{2}+{\theta_{{1}}}^{2}-2\,{\theta_{{3}}}^{2}
 )  (
2\,{\theta_{{1}}}^{4} \\
-2\,{ \theta_{{1}}}^{2}{\theta_{{2}}}^{2} +{\theta_{{3}}}^{4} )
 ( -2\,{\theta_{{ 2}}}^{4}-{\theta_{{3}}}^{4}+2\,{\theta_{{1}}}^{2}{\theta_{{2}}}^{2}
 )  ( -10\,{\theta_{{1}}}^{4}{\theta_{{2}}}^{12}{\theta
_{{3}}}^{8}\\
+206\,{\theta_{{1}}}^{4}{\theta_{{2}}}^{4}{\theta_{{3}}}^{ 16} +8\,{\theta_{{1}}}^{8}{ \theta_{{2}}}^{16}
-34\,{\theta_{{1}}}^{4}{\theta_{{2}}}^{8}{\theta_{{3}}}^{12} -126\,{
\theta_{{1}}}^{2}{\theta_{{2}}}^{6}{\theta_{{3}}}^{16}\\
+18\,{\theta_{{1 }}}^{2}{\theta_{{2}}}^{10}{\theta_{{3}}}^{12}+27\,{\theta_{{1}}}^{8}{\theta_{{3}}}^{16}
-132\,{\theta_{{1}}}^{8}{\theta_{{2}}}^{8}{\theta_{{3}}}^{8}
 -34\,{\theta_{{1}
}}^{8}{\theta_{{2}}}^{4}{\theta_{{3}}}^{12}\\
-16\,{\theta_{{1}}}^{8}{ \theta_{{3}}}^{4}{\theta_{{2}}}^{12} -16\,{\theta_{{1}}}^{6}{\theta_{{2
}}}^{14}{\theta_{{3}}}^{4} -126\,{\theta_{{1}}}^{6}{\theta_{{2}}}^{2}{ \theta_{{3}}}^{16}
+24\,{\theta_{{1}}}^{6}{\theta_{{2}}}^{6}{\theta_{{3 }}}^{12}\\
+68\,{\theta_{{1}}}^{6}{\theta_{{2}}}^{10}{\theta_{{3}}}^{8} -
24\,{\theta_{{1}}}^{12}{\theta_{{2}}}^{12}+8\,{\theta_{{2}}}^{8}{\theta _{{1}}}^{16} -10\,{\theta_{{1}}}^{12}{
\theta_{{3}}}^{8}{\theta_{{2}}}^{4} \\
-16\,{\theta_{{1}}}^{12}{\theta_{{3 }}}^{4}{\theta_{{2}}}^{8} +88\,{\theta_{{1}}}^{10}{\theta_{{3}}}^{4}{
\theta_{{2}}}^{10} +18\,{\theta_{{1}}}^{10}{\theta_{{2}}}^{2}{\theta_{{ 3}}}^{12}
+68\,{\theta_{{1}}}^{10}{\theta_{{3}}}^{8}{\theta_{{2}}}^{6} \\
+27\,{\theta_{{2}}}^{8}{\theta_{{3}}}^{16}
-16\,{\theta_{{1}}}^{14}{\theta_{{2}}}^{6}{\theta_{{3}}}^{ 4} )  = &  0.
\end{split}
\end{equation}
\noindent {ii)} When $\T_4^2 = -\T_3^2$, we have
%\begin{small}
\begin{equation}
\begin{split}
( \theta_{{1}}^4-\theta_{{2}}^4 )  ( {\theta _{{3}}}^{4}+{\theta_{{1}}}^{2}{\theta_{{2}}}^{2} ) (
-{\theta_{{2}}}^{2}+{\theta_{{1}}}^{2}-2\,{\theta_{{3}}}^{2} )  (
-{\theta_{{2}}}^{2}+{\theta_{{1}}}^{2}+2\,{\theta_{{3}}}^{2 } )\\
({\theta_{{3}}}^{ 4} +2\,{\theta_{{1}}}^{2}{\theta_{{2}}}^{2} +2\,{\theta_{{1}}}^{4} )
 ( 2\,{ \theta_{{2}}}^{4}+{\theta_{{3}}}^{4}+2\,{\theta_{{1}}}^{2}{\theta_{{2} }}^{2})  (
206\,{\theta_{{1}}}^{4}{\theta_{{2}}}^{4}{\theta_{{3
}}}^{16}\\
-10\,{\theta_{{1}}}^{4}{\theta_{{2}}}^{12}{ \theta_{{3}}}^{8}+27\,{\theta_{{2}}}^{8}{\theta_{{3}}}^{16}
 -34\,{\theta_{{1}}}^{4}{\theta_{{2}}}^{8}{\theta_{{3}}}^{12}
+ 126\,{\theta_{{1}}}^{2}{\theta_{{2}}}^{6}{\theta_{{3}}}^{16}\\
-18\,{ \theta_{{1}}}^{2}{\theta_{{2}}}^{10}{\theta_{{3}}}^{12} -68\,{\theta_{{1}}}^{10}{\theta_{{3}}}^{8}{\theta_{{
2}}}^{6} +8\,{\theta_{{1 }}}^{8}{\theta_{{2}}}^{16} +27\,{\theta_{{1}}}^{8}{\theta_{{3}}}^{16} \\
-132\,{\theta_{{1}}}^{8}{\theta_{{2}}}^{8}{\theta_{{3}}}^{8}-34\,{
\theta_{{1}}}^{8}{\theta_{{2}}}^{4}{\theta_{{3}}}^{12} -16\,{\theta_{{1 }}}^{8}{\theta_{{3}}}^{4}{\theta_{{2}}}^{12}
+16\,{\theta_{{1}}}^{6}{ \theta_{{2}}}^{14}{\theta_{{3}}}^{4} \\
+126\,{\theta_{{1}}}^{6}{\theta_{{ 2}}}^{2}{\theta_{{3}}}^{16} -24\,{\theta_{{1}}}^{6}{\theta_{{2}}}^{6}{
\theta_{{3}}}^{12} -68\,{\theta_{{1}}}^{6}{\theta_{{2}}}^{10}{\theta_{{
3}}}^{8}-24\,{\theta_{{1}}}^{12}{\theta_{{2}}}^{12} \\
+16\,{\theta_{{1}}}^{14}{\theta_{{2}}}^{6}{\theta _{{3}}}^{4} -10\,{\theta_{{1}}}
^{12}{\theta_{{3}}}^{8}{\theta_{{2}}}^{4} -16\,{\theta_{{1}}}^{12}{ \theta_{{3}}}^{4}{\theta_{{2}}}^{8}
-88\,{\theta_{{1}}}^{10}{\theta_{{3 }}}^{4}{\theta_{{2}}}^{10}\\
-18\,{\theta_{{1}}}^{10}{\theta_{{2}}}^{2}{ \theta_{{3}}}^{12} +8\,{\theta_{{2}}}^{ 8}{\theta_{{1}}}^{16} ) = & 0.
\end{split}
\end{equation}
%\end{small}
%\end{description}
%
\end{rem}
Define the following as\\
%
%\begin{small}
\[A = (\frac{\T_2}{\T_1})^4,  \quad  B = (\frac{\T_3}{\T_1})^4,  \quad C=(\frac{\T_4}{\T_1})^4,
\quad D = (\frac{\T_8}{\T_1})^4,  \quad E =(\frac{\T_{10}}{\T_1})^4.
\]
%\end{small}
%
Using the two identities given by Eq.~\eqref{Frobenius}, we have
%
%\begin{small}
\[
\begin{split}
1+A-B-C-D-E & = 0, \\
{A}^{2}-2\,  D EA+2\,BCA+{C}^{2}{B}^{2}-2\,  D ECB+{D}^{2}{E}^{2} & =0.
\end{split}
\]
Then we formulate the following lemma.
\def\embd{\hookrightarrow}
\begin{lem}
Let $\X$ be a genus 2 curve. Then $V_4 \embd Aut(\X)$ if and only if the theta constants of $\X$ satisfy
\begin{equation}
\begin{split}
  ( B-A )  ( A-C )  ( B-C )  ( 1-A )  ( 1-B )( 1-C )( 1-2\,C+2\,A  +{A}^{2}{C}^{2}
  \\
  -4\,  D E
  -AC -2\,{A}^{2}BC
  +2\,A  D EBC+A{B}^{2}+ D EBC  +AD EB  -{A}^{2}\\
  +4\,ABC
   -2\,A{B}^{2}{C}^{2}
  -{A}^{2}B+A  D E
  -{B}^{2}{C}^{2}-2\,B{C}^{2}+{B}^{2}C ) ( - D EBC\\
  -4\,ABC
    +{B}^{2}{C}^{2}  +AC+A{B}^{2}C -A  DEB
  +{A}^{2}
  +{A}^{2}C
  +AB{C}^{2}\\
    - D EC
    -2\,A DEC-{A}^{2}{C}^{2}-{A}^{2}BC-A{C}^{2} -A D E )  =&  0.
\end{split}
\end{equation}
%\end{small}
\end{lem}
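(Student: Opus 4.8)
The plan is to derive the statement from the characterization already in hand. By Lemma~\ref{lemma1} and Theorem~\ref{theorem1}(i), $V_4\embd\Aut(\X)$ holds exactly when the theta constants satisfy Eq.~\eqref{V_4locus1}, an equation built out of only $\T_1,\T_2,\T_3,\T_4,\T_8,\T_{10}$ and homogeneous in them. So the task is purely to rewrite the zero locus of Eq.~\eqref{V_4locus1} in the normalized coordinates $A,B,C,D,E$ and to recognize the displayed factorization. Equivalently one may start one step further back, from the fifteen-factor form Eq.~\eqref{L2_factored} in the branch points $a_1,a_2,a_3$ and substitute Picard's formulas Eq.~\eqref{Picard}; since each $a_i$ is a ratio of the $\T_i^2$, this again lands one in the $\T_i$.

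The first step is to divide Eq.~\eqref{V_4locus1} by the appropriate power of $\T_1$ and substitute the definitions of $A,B,C,D,E$. The reason the result can be written in these fourth-power coordinates at all, and with $D,E$ entering only through the product $DE$, is the Frobenius structure: Eq.~\eqref{V_4locus1} depends on $\T_8,\T_{10}$ only symmetrically, and the two identities Eq.~\eqref{Frobenius} express the symmetric functions $\T_8^4+\T_{10}^4$ and $\T_8^2\T_{10}^2$ in terms of $\T_1,\dots,\T_4$ and of a single further quantity. In normalized form the first identity is $D+E=1+A-B-C$, which I would use to eliminate $D+E$, while the second controls $\T_8^2\T_{10}^2$ so that the residual dependence is only on $(\T_8^2\T_{10}^2)^2=\T_1^8\,DE$. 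After this reduction the condition is a polynomial in $A,B,C$ and $DE$ alone, which is the shape asserted in the lemma.

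It then remains to factor this polynomial, which I would carry out in a computer algebra system. The six differences $\T_1^4-\T_2^4,\ \T_3^4-\T_4^4,\ \T_1^4-\T_3^4,\dots$ already visible in the fully reduced $V_4$-equation Eq.~\eqref{V_4locus2} reappear here as the linear factors $(B-A)(A-C)(B-C)(1-A)(1-B)(1-C)$, and the remaining material splits into the two larger factors displayed in the statement. To obtain an equivalence rather than a one-way implication, I would keep the nondegeneracy hypotheses $\T_i\neq0$ and $J_{10}\neq0$ in force, so that the powers of $\T_1$ cleared during normalization, and any unit factors produced by the Frobenius substitution, can be removed without changing the zero set.

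I expect the real difficulty to lie in this last reduction-and-factorization, rather than in any conceptual point. Two things must be checked carefully: first, that the symmetric $\T_8\leftrightarrow\T_{10}$ dependence together with the two Frobenius identities really does collapse to a polynomial in $A,B,C,DE$ without leaving irrational $\sqrt{A},\sqrt{B},\dots$ terms behind; and second, that the machine factorization reproduces the stated factors exactly, with every discarded factor genuinely nonzero on the locus of smooth genus-$2$ curves. These are the steps where the bookkeeping is heaviest and where an error would most easily hide.
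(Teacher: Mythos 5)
Your proposal is correct and follows essentially the same route as the paper: the paper gives no separate proof of this lemma, stating it immediately after introducing $A,B,C,D,E$ and the two normalized Frobenius identities, and the intended derivation is exactly yours --- rewrite the $V_4$ characterization of Eq.~\eqref{V_4locus1} in these variables, eliminate $\T_8,\T_{10}$ via the Frobenius relations, factor by computer algebra, and discard the factors that cannot vanish for a smooth curve (using $J_{10}\neq 0$ and the nonvanishing of the fundamental thetanulls), just as in the paper's proof of the preceding lemma. The checkpoints you flag --- the collapse to a polynomial in $A,B,C$ and $DE$, and the nonvanishing of the discarded factors --- are precisely the points on which the paper's own (implicit) argument relies.
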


%%%%%%%%%%%%%%%%%%%%%%%%%%%%%%%%%%%%%%%%%%%%%%%%%%%%%%%%%%%%%%%%%%%%%%%%%%%%%%%%%%%%%%%%%%%%%%%%%%%%%%%%%%%%%%%%%%%%%%%%%%%
%%%%%%%%%%%%%%%%%%%%%%%%%%%%%%%%%%%%%%%%%%%%%%%%%%%%%%%%%%%%%%%%%%%%%%%%%%%%%%%%%%%%%%%%%%%%%%%%%%%%%%%%%%%%%%%%%%%%%%%%%%%

\section{Genus 3 curves}

\subsection{Introduction to Genus 3 Curves} In this section, we focus on genus 3 cyclic curves. The
locus $\L_3$ of genus $3$ hyperelliptic curves with extra involutions is a $3$-dimensional subvariety of $\H_3.$ If $\X
\in \L_3$ then $V_4 \hookrightarrow Aut(\X).$ The normal form of the hyperelliptic genus $3$ curve is given by $$y^3 =
x^8 + a_3 X^6+a_2 x^4 + a_1 x^2 + 1$$ and the dihedral invariants of $\X_3$ are $u_1 = a_1^4 + a_3 ^4,  u_2 =(a_1^2 +
a_3^2) a_2 , u_3 = 2 a_1 a_3.$ The description of the locus of genus $3$ hyperelliptic curves in terms of dihedral
invariants or classical invariants is given in \cite{S2}. We would like to describe the locus of genus $3$
hyperelliptic curves with extra involutions and all its sub loci in terms of theta functions.

The list of groups that occur as automorphism groups of genus $3$ curves has been computed by many authors. We denote
the following groups by $G_1$ and $G_2$: $$G_1 = \langle x,y | x^2, y^6 ,x y x y^4 \rangle, \quad \quad \quad G_2 =
\langle x,y |x^4, y^4, (xy)^2, (x^{-1} y)^2 \rangle .$$
%\section{Automorphism groups}
In Table 2, we list all possible hyperelliptic genus 3 algebraic curves; see \cite{MS} for details. In this case
$\Aut(\X)$  has a central subgroup $C$ of order 2 such that the genus of $\X^C$ is zero. In the second column of the
table, the groups which occur as full automorphism groups are given, and the third column indicates the reduced
automorphism group for each case. The dimension $\delta$ of the locus and the equation of the curve are given in the
next two columns. The last column is the GAP identity of each group in the library of small
groups in GAP. Note that $C_2, C_4$ and $C_{14}$ are % I have changed $\Z$ to $C.$
the only groups which don't have extra involutions. Thus, curves with automorphism group $C_2, C_4$ or $C_{14}$ do not
belong to the locus $\L_3$ of genus 3 hyperelliptic curves with extra involutions.

In Table 3, we list the automorphism groups of genus $3$ nonhyperelliptic curves. In the table, the second column
represents the normal cyclic subgroup $C$ such that $g(\X^C) = 0.$ For the last 3 cases in the table, the automorphism
groups of the curves are not normal homocyclic covers of $\P^1.$ The only cyclic curves are curves with automorphism
groups $C_4^2 \rtimes S_3,$ $C_3,$ $C_6,$ $C_9$ and two other groups given by $(16,13)$ and $(48,33)$ in GAP identity.
In this chapter we write the equations of the cyclic curves of genus 3 by using theta constants.
\medskip
\begin{center}
%\begin{tiny}
\begin{table}[t!]\label{tablehyperelliptic}
\caption{Genus 3 hyperelliptic curves and their automorphisms}
\begin{tabular}{||c|c|c|c|c|c||}
\hline \hline & &   & & &\\
 &$\Aut(\X)$ & $\bAut(\X)$ & $\, \delta \, $ &equation  $y^2= f(x) $  &  Id.\\
&         &    & &   &\\
\hline \hline  &&&& &  \\
 1 & $C_2$ &$\{1\}$ & 5&$x(x-1)(x^5+ax^4+bx^3+cx^2+dx+e)$&  $(2,1)$ \\
%
%\hline
& & & &   &\\
2 & $C_2 \times C_2$ &$C_2$& 3&$ x^8 + a_3 x^6 + a_2 x^4 + a_1 x^2 + 1$ &  $(4,2)$\\
3 & $C_4$ & $C_2$ &2&$x(x^2-1)(x^4+ax^2+b)$&  $(4,1)$\\
4 & $C_{14}$ &$C_7$ &0&$x^7-1$ &  $(14,2)$ \\
%
%\hline
& & & &  &\\
5 & $C_2^3$ &$D_4$ &2&$(x^4+ax^2+1)(x^4+bx^2+1)$   &$(8,5)$\\
6 & $C_2 \times D_8$ &$D_8$ &1&$x^8+ax^4+1$ &    $(16,11)$ \\
7 & $C_2\times C_4$ &$D_4$ & 1&$(x^4-1)(x^4+ax^2+1)$&    $(8,2)$\\
8 & $D_{12}$ &$D_6$  &1&$x(x^6+ax^3+1)$   &$(12,4)$\\
9 & $G_1$ &$D_{12}$ & 0&$x(x^6-1)$    & $(24,5)$\\
10 & $G_2$ &$D_{16}$  &0& $x^8-1$&   $(32,9)$\\
%
%
%\hline
& & & &   &\\
11 & $C_2 \times S_4$ & $S_4$  &0  & $x^8+14x^2+1$ &   $(48,48)$ \\
&  & & &   &\\
 \hline\hline
\end{tabular}
\end{table}
%\end{tiny}
\end{center}
%\medskip
%
%\vspace{-1.2cm}
\begin{center}
%\begin{tiny}
\begin{table}[h!]\label{tableNonHyper}
\caption{Genus 3 non hyperelliptic curves and their automorphisms}
%\centering
\begin{tabular}{||c|c|c|c|c|c||}
\hline \hline
&&& &&\\
$\#$ & $Aut(\X)$  & $C$ & $Aut(\X)/C$&  equation & Id. \\
%&&&&& ID \\
&&&&&  \\
\hline  \hline

&&&&&\\

1& $V_4$ & $ V_4$ & $\{1\}$& $x^4+y^4+ax^2y^2+bx^2+cy^2+1=0$&(4,2) \\

2& $D_8$ & $ V_4$ &  $C_2$&   take\ $b=c$& (8,3)\\

3 & $S_4$ & $ V_4$ &  $S_3$&   take\ $a=b=c$ &(24,12) \\

4& $C_4^2 \xs S_3$ & $ V_4$ & $S_4$&      \ take \, $a=b=c=0$ \, or\, $y^4=x(x^2-1)$ & (96,64) \\

\hline

5 & $16$ & $C_4$& $V_4$&  $y^4=x(x-1)(x-t)$&(16,13)        \\

6& $48$ & $C_4$&  $A_4$&   $y^4=x^3-1$ &(48,33)              \\

\hline

7& $C_3$ & $C_3$& $\{1\}$&  $y^3=x(x-1)(x-s)(x-t)$&(3,1)  \\

8& $C_6$ & $C_3$& $C_2$&   take\ $s=1-t$& (6,2)     \\

9& $C_9$ & $C_3$& $C_3$ & $y^3=x(x^3-1)$&(9,1)\\

\hline &&&&&\\

10& $L_3(2)$ & & & $x^3y+y^3z+z^3x=0$ &(168,42) \\

\hline

& &&&&\\
11& $S_3$ & & &   $a(x^4+y^4+z^4)+b(x^2y^2+x^2z^2+y^2z^2)+$&(6,1)\\

& & &  &$c(x^2yz+y^2xz+z^2xy)=0$&\\
\hline& &&  &&\\

12& $C_2$ & & & $ x^4+x^2(y^2+az^2) + by^4+cy^3z+dy^2z^2$&(2,1)\\
&&&&  $ +eyz^3+gz^4=0$, \  \ either $e=1$ or $g=1$ &\\

\hline \hline
\end{tabular}
\end{table}
%\end{tiny}
\end{center}

Figure ~\ref{figgenus=3} describes the inclusions among all subloci for genus 3 curves. In order to study such
inclusions, the lattice of the list of automorphism groups of genus 3 curves needs to be determined. Let's consider the
locus of the hyperelliptic curve whose automorphism group is $V_4 = \{1,\alpha,\beta,\alpha \beta\}.$ Suppose $\alpha$
is the hyperelliptic involution. Since the hyperelliptic involution is unique, the genus of the quotient curve
$\X^{\langle \beta \rangle}$ is 1. Also we have $\langle \alpha \rangle \cong C_2 \hookrightarrow V_4$ and $\langle
\beta \rangle \cong C_2 \hookrightarrow V_4.$ Therefore the locus of the hyperelliptic curve with automorphism group
$V_4$ can be embedded into two different loci with automorphism group $C_2.$ One comes from a curve that has
hyperelliptic involution and the other comes from a curve which does not have hyperelliptic involution. Similarly we
can describe the inclusions of each locus.  The lattice of the automorphism groups for genus 3 curves is given Figure
1.
\subsection{Theta Functions for Hyperelliptic Curves}  For genus three hyperelliptic curves, we have 28
odd theta characteristics and 36 even theta characteristics. The following shows the corresponding characteristics for
each theta function. The first 36 are for the even functions and the last 28 are for the odd functions. For simplicity,
we denote them by $\T_i(z)$ instead of $\T_i \ch {a} {b} (z , \t)$ where $i=1,\dots ,36$ for the even functions and
$i=37, \dots, 64$ for the odd functions.
\[
\begin{split}
\T_1(z) &= \T_1 \chs {0}{0}{0}{0} 0 0 (z , \t), \qquad \qquad
\T_2(z) = \T_2 \chs {\frac{1}{2}} 0 {\frac{1}{2}} {\frac{1}{2}} {\frac{1}{2}} {\frac{1}{2}} (z , \t)\\
\T_3(z) &= \T_3 \chs {\frac{1}{2}} {\frac{1}{2}} {\frac{1}{2}} 0 0 0 (z , \t), \qquad \qquad
\T_4(z) = \T_4 \chs 0 0 0 {\frac{1}{2}} 0 0  (z , \t)\\
\T_5(z) &= \T_5 \chs {\frac{1}{2}} 0 0 0 {\frac{1}{2}} 0 (z , \t), \qquad \qquad
\T_6(z) = \T_6 \chs {\frac{1}{2}} {\frac{1}{2}} 0 0 0 {\frac{1}{2}} (z , \t)\\
\T_7(z) &= \T_7 \chs 0 {\frac{1}{2}} {\frac{1}{2}} {\frac{1}{2}} 0 0  (z , \t), \qquad \qquad
\T_8(z) = \T_8 \chs 0 0 {\frac{1}{2}} 0 {\frac{1}{2}} 0 (z , \t)\\
\T_9(z) &= \T_9 \chs 0 0 0 0 0 {\frac{1}{2}}(z , \t),\qquad \qquad
\T_{10}(z) = \T_{10} \chs {\frac{1}{2}} 0 0 0 0 0   (z , \t)\\
\T_{11}(z) &= \T_{11} \chs {\frac{1}{2}} {\frac{1}{2}} 0 {\frac{1}{2}} {\frac{1}{2}} 0 (z , \t),\qquad \qquad
\T_{12}(z) = \T_{12} \chs {\frac{1}{2}} {\frac{1}{2}} {\frac{1}{2}} {\frac{1}{2}} 0 {\frac{1}{2}} (z , \t)\\
\T_{13}(z) &= \T_{13} \chs 0 0 0 {\frac{1}{2}} {\frac{1}{2}} 0 (z , \t),\qquad \qquad
\T_{14}(z) = \T_{14} \chs 0 {\frac{1}{2}} 0 0 0 0 (z , \t)\\
\T_{15}(z) &= \T_{15} \chs 0{\frac{1}{2}} {\frac{1}{2}} 0 {\frac{1}{2}} {\frac{1}{2}} (z , \t),\qquad \qquad
\T_{16}(z) = \T_{16} \chs 0 {\frac{1}{2}} 0 {\frac{1}{2}} 0 {\frac{1}{2}} (z , \t)\\
\T_{17}(z) &= \T_{17} \chs 0 0 0 0 {\frac{1}{2}} {\frac{1}{2}} (z , \t),\qquad \qquad
\T_{18}(z) = \T_{18} \chs 0 0 {\frac{1}{2}} 0 0 0 (z , \t)\\
\end{split}
\]

%%%%%%%%%%%%%%%%%%%%%%%%%%%%%%%%%%%%%%%%Figure code start here %%%%%%%%%%%%%%%%%%%%%%%%%%%%%%%%%%%%%%%%%%%
\begin{figure}[h]
\begin{center}
\begin{picture}(485,410)(-65,-50) %(400,375) (-65,-50)
\thicklines
%1
\qbezier(-10,-40)(-10,150)(40,315) %correct
%2
\put(40,185){\line(0,1){130}} %correct
%12
%\put(40,185){\line(3,1){170}} %correct
\qbezier(40,185)(115,213)(190,241) %correct  % change 243 to 241
%15
\dashline{4}[1](190,241)(298,190)  %correct
%16
\dashline{4}[1](300,168)(290,110) %correct
%13
\dashline{4}[1](190,240)(240,110)%correct
%19
%\put(40,290){\line(1,-2){50}}
\qbezier(40,315)(77,211)(115,110) %correct
%23
%\put(40,141){\line(1,-3){13}}
\qbezier(35,165)(44,137)(53,110) %correct
%3
%\put(40,40){\line(0,1){100}}
\qbezier(35,165)(35,100)(35,35) %correct
%18
\qbezier(35,165)(95,130)(100,35) %correct
%21
%\put(130,103){\line(1,-3){20}}
\qbezier(115,93)(133,64)(151,35) %correct
%28
\dashline{4}[1](159,93)(210,35)%correct
%22
%\put(53,93){\line(-2,1){93}}
\qbezier(57,93)(104,64)(151,35) %correct
%24
\dashline{4}[1](159,93)(100,35)%correct
%10
\dashline{4}[1](240,93) (100,35) %correct
%29
\dashline{4}[1](288,93) (151,35) %correct
%14
\dashline{4}[1](190,240)(210,35)
%17
\dashline{4}[1](255,35)(288,93)
%30
\dashline{4}[1](288,93)(310,35)
%11
\dashline{4}[1](240,93)(310,35)
%4
\qbezier(35,18)(48,-11)(61,-40) %correct
%5
\qbezier(100,18)(80,-11)(61,-40) %correct
%7
\qbezier(100,18)(120,-11)(140,-40) %correct
%6
\qbezier(90,-40)(120,-11)(151,18) %correct
%8
\dashline{4}[1](151,18)(140,-40)
%9
\dashline{4}[1](310,18)(140,-40)
%27
\dashline{4}[1](189,-40)(159,93)%correct
%30
\dashline{4}[1](232,-40)(210,18)
%31
\dashline{4}[1](232,-40)(255,18)
%32
\dashline{4}[1](277,-40)(310,18)
%33
\dashline{4}[1](325,-40)(310,18)
%
%%%%%%%%%%%%%%%%%%%%%%%
%
\put(30,175){\small{$ V_4$}}  %correct
%
%\put(30,296){\small{ C}}
%\put(40,293){\scriptsize{2}}
\put(30,325){\small{ $C_2$}} %correct

\put(185,250){\small{ $C_2$}} %correct

\put(295,175){\small{$V_4$}} %correct

\put(285,100){\small{$D_8$}}%correct

\put(235,100){\small{$S_3$}} %correct

\put(154,100){\small{$C_3$}}%correct

\put(110,100){\small{$C_4$}} %correct

\put(48,100){\small{$(C_2)^2$}} %correct

\put(18,25){\small{$C_2 \times C_2$}} %correct

\put(95,25){\small{$D_{12}$}} %correct

\put(133,25){\small{$C_2 \times D_8$}}  %correct

\put(205,25){\small{$C_6$}} %correct

\put(250,25){\small{$16$}} %correct

\put(305,25){\small{$S_4$}} %correct

\put(-15,-50){\small{$C_{14}$}}%correct

\put(56,-50){\small{$G_1$}}

\put(85,-50){\small{$G_2$}}

\put(125,-50){\small{$C_2 \times S_4$}}

\put(184,-50){\small{$C_{9}$}}

\put(227,-50){\small{$48$}}

\put(272,-50){\small{$96$}}

\put(315,-50){\small{$L_3(2)$}}

\put(-40,-50){\small{0}}

\put(-40,25){\small{1}}

\put(-40,100){\small{2}}

\put(-40,175){\small{3}}

\put(-40,250){\small{4}}

\put(-40,325){\small{5}}

\put(-50,355){{Dimension}} \put(-50,340){ of Loci}

\qbezier(205,328)(175,328)(245,328) \put(250,325){{hyperelliptic}}

\dashline{4}[1](199,313)(245,313) \put(250,310){{non hyperelliptic}}
\end{picture}
\caption{Inclusions among the loci for genus $3$ curves with automorphisms.} \label{figgenus=3}
\end{center}
\end{figure}
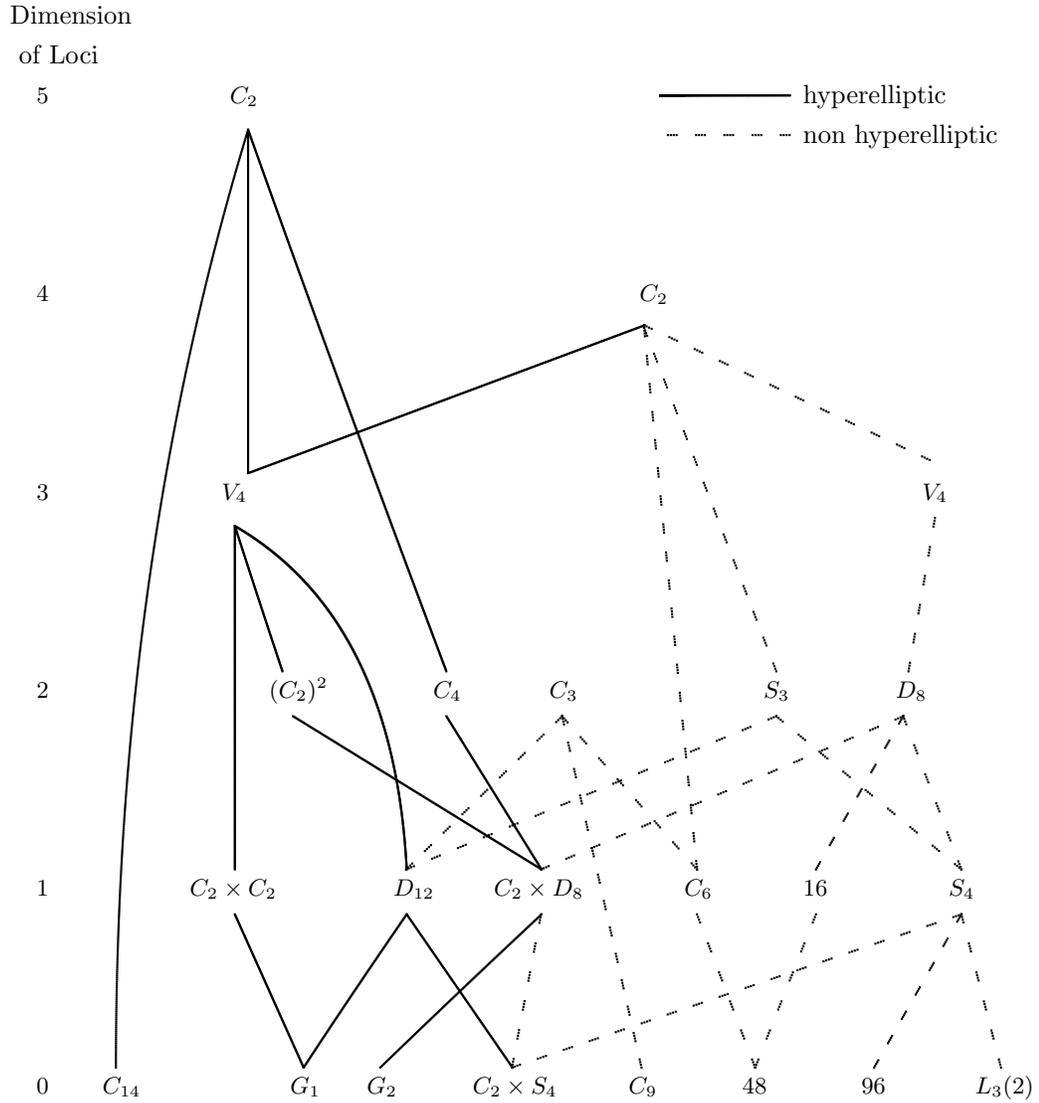

\[
\begin{split}
\T_{19}(z) &= \T_{19} \chs {\frac{1}{2}} {\frac{1}{2}} 0 {\frac{1}{2}} {\frac{1}{2}} {\frac{1}{2}} (z , \t),\qquad
\qquad
\T_{20}(z) = \T_{20} \chs 0 {\frac{1}{2}} 0 0 0 {\frac{1}{2}} (z , \t)\\
\T_{21}(z) &= \T_{21} \chs 0 0 0 0 {\frac{1}{2}} 0 (z , \t),\qquad \qquad
\T_{22}(z) = \T_{22} \chs 0 {\frac{1}{2}} {\frac{1}{2}} 0 0 0 (z , \t)\\
\T_{23}(z) &= \T_{23} \chs {\frac{1}{2}} {\frac{1}{2}} {\frac{1}{2}} {\frac{1}{2}} {\frac{1}{2}} 0 (z , \t),\qquad
\qquad
\T_{24}(z) = \T_{24} \chs {\frac{1}{2}} 0 {\frac{1}{2}} {\frac{1}{2}} 0 {\frac{1}{2}} (z , \t)\\
\end{split}
\]
\[
\begin{split}
\T_{25}(z) &= \T_{25} \chs {\frac{1}{2}} 0 0 0 0 {\frac{1}{2}}(z , \t),\qquad \qquad
\T_{26}(z) = \T_{26} \chs 0 0 0 {\frac{1}{2}} {\frac{1}{2}} {\frac{1}{2}} (z , \t)\\
\T_{27}(z) &= \T_{27} \chs 0 {\frac{1}{2}} 0 {\frac{1}{2}} 0 0 (z , \t),\qquad \qquad
\T_{28}(z) = \T_{28} \chs 0 0 {\frac{1}{2}} {\frac{1}{2}} {\frac{1}{2}} 0 (z , \t)\\
\T_{29}(z) &= \T_{29} \chs {\frac{1}{2}} 0 {\frac{1}{2}} 0 0 0 (z , \t),\qquad \qquad
\T_{30}(z) = \T_{30} \chs {\frac{1}{2}} {\frac{1}{2}} {\frac{1}{2}} 0 {\frac{1}{2}} {\frac{1}{2}} (z , \t)\\
\T_{31}(z) &= \T_{31} \chs{\frac{1}{2}} 0 {\frac{1}{2}} 0 {\frac{1}{2}} 0 (z , \t),\qquad \qquad
\T_{32}(z) = \T_{32} \chs 0 0 {\frac{1}{2}} {\frac{1}{2}} 0 0 (z , \t)\\
\T_{33}(z) &= \T_{33} \chs 0 {\frac{1}{2}} {\frac{1}{2}} {\frac{1}{2}} {\frac{1}{2}} {\frac{1}{2}} (z , \t),\qquad
\qquad
\T_{34}(z) =\T_{34}\chs 0 0 0 {\frac{1}{2}} 0 {\frac{1}{2}} (z , \t) \\
\T_{35}(z) &= \T_{35} \chs {\frac{1}{2}} 0 0 0 {\frac{1}{2}} {\frac{1}{2}} (z , \t),\qquad \qquad
\T_{36}(z) = \T_{36}\chs {\frac{1}{2}} {\frac{1}{2}} 0 0 0 0  (z , \t) \\
\T_{37}(z) &= \T_{37} \chs {\frac{1}{2}} 0 0 {\frac{1}{2}} 0 0  (z , \t),\qquad \qquad
\T_{38}(z) = \T_{38}\chs {\frac{1}{2}} {\frac{1}{2}} 0 0 {\frac{1}{2}} 0 (z , \t) \\
\T_{39}(z) &= \T_{39} \chs {\frac{1}{2}} {\frac{1}{2}} {\frac{1}{2}} 0 0 {\frac{1}{2}} (z , \t),\qquad \qquad
\T_{40}(z) = \T_{40} \chs 0 {\frac{1}{2}} 0 {\frac{1}{2}} {\frac{1}{2}} 0 (z , \t) \\
\T_{41}(z) &= \T_{41} \chs 0 {\frac{1}{2}} {\frac{1}{2}} {\frac{1}{2}} 0 {\frac{1}{2}} (z , \t),\qquad \qquad
\T_{42}(z) = \T_{42}\chs 0 0 {\frac{1}{2}} 0 {\frac{1}{2}} {\frac{1}{2}}(z , \t) \\
\T_{43}(z) &= \T_{43} \chs {\frac{1}{2}} {\frac{1}{2}} {\frac{1}{2}} {\frac{1}{2}} 0 0 (z , \t),\qquad \qquad
\T_{44}(z) =\T_{44} \chs 0 {\frac{1}{2}} {\frac{1}{2}} 0 {\frac{1}{2}} 0 (z ,\t) \\
\T_{45}(z) &= \T_{45} \chs 0 0 {\frac{1}{2}} 0 0 {\frac{1}{2}} (z , \t),\qquad \qquad
\T_{46}(z) = \T_{46} \chs 0 {\frac{1}{2}} 0 0 {\frac{1}{2}} {\frac{1}{2}} (z , \t) \\
\T_{47}(z) &= \T_{47} \chs {\frac{1}{2}} {\frac{1}{2}} 0 {\frac{1}{2}} 0 {\frac{1}{2}} (z , \t),\qquad \qquad
\T_{48}(z) = \T_{48} \chs {\frac{1}{2}} 0 0 {\frac{1}{2}} {\frac{1}{2}} 0 (z , \t)\\
\T_{49}(z) &= \T_{49} \chs{\frac{1}{2}} 0 {\frac{1}{2}} {\frac{1}{2}} {\frac{1}{2}} 0 (z , \t),\qquad \qquad
\T_{50}(z) = \T_{50}\chs {\frac{1}{2}} 0 0 {\frac{1}{2}} 0 {\frac{1}{2}} (z , \t) \\
\T_{51}(z) &= \T_{51} \chs {\frac{1}{2}} {\frac{1}{2}} 0 0 {\frac{1}{2}} {\frac{1}{2}}(z , \t),\qquad \qquad
\T_{52}(z) = \T_{52}\chs 0 0 {\frac{1}{2}} {\frac{1}{2}} {\frac{1}{2}} {\frac{1}{2}} (z , \t) \\
\T_{53}(z) &= \T_{53} \chs 0 {\frac{1}{2}} {\frac{1}{2}} 0 0 {\frac{1}{2}}(z , \t),\qquad \qquad
\T_{54}(z) = \T_{54}\chs 0 {\frac{1}{2}} 0 0 {\frac{1}{2}} 0 (z , \t) \\
\T_{55}(z) &= \T_{55} \chs {\frac{1}{2}} 0 {\frac{1}{2}} 0 0 {\frac{1}{2}} (z , \t),\qquad \qquad
\T_{56}(z) = \T_{56} \chs {\frac{1}{2}} {\frac{1}{2}} {\frac{1}{2}} {\frac{1}{2}} {\frac{1}{2}} {\frac{1}{2}} (z , \t)\\
\T_{57}(z) &= \T_{57} \chs {\frac{1}{2}} {\frac{1}{2}} 0 {\frac{1}{2}} 0 0 (z , \t),\qquad \qquad
\T_{58}(z) = \T_{58} \chs {\frac{1}{2}} {\frac{1}{2}} {\frac{1}{2}} 0 {\frac{1}{2}} 0 (z , \t)\\
\T_{59}(z) &= \T_{59} \chs {\frac{1}{2}} 0 {\frac{1}{2}} {\frac{1}{2}} 0 0 (z , \t),\qquad \qquad
\T_{60}(z) = \T_{60} \chs {\frac{1}{2}} 0 0 {\frac{1}{2}} {\frac{1}{2}} {\frac{1}{2}} (z , \t)\\
\T_{61}(z) &= \T_{61} \chs {\frac{1}{2}} 0 {\frac{1}{2}} 0 {\frac{1}{2}} {\frac{1}{2}} (z , \t),\qquad \qquad
\T_{62}(z) = \T_{62} \chs 0 0 {\frac{1}{2}} {\frac{1}{2}} 0 {\frac{1}{2}}(z , \t)\\
\T_{63}(z) &= \T_{63} \chs 0 {\frac{1}{2}} {\frac{1}{2}} {\frac{1}{2}} {\frac{1}{2}} 0 (z , \t),\qquad \qquad
\T_{64}(z) = \T_{64} \chs 0 {\frac{1}{2}} 0 {\frac{1}{2}} {\frac{1}{2}} {\frac{1}{2}} (z , \t)\\
\end{split}
\]
%\end{small}

\begin{rem}
Each half-integer characteristic other than the zero characteristic can be formed as a sum of not more than 3 of the
following seven characteristics:

\[
\begin{split}
& \left\{  \chs {\frac{1}{2}} 0 0 {\frac{1}{2}} 0 0, \chs {\frac{1}{2}} {\frac{1}{2}} 0 0 {\frac{1}{2}} 0, \chs
{\frac{1}{2}} {\frac{1}{2}} {\frac{1}{2}} 0 0 {\frac{1}{2}}, \chs {\frac{1}{2}} 0 {\frac{1}{2}} 0 {\frac{1}{2}}
{\frac{1}{2}}, \chs
0 0 {\frac{1}{2}} {\frac{1}{2}} 0 {\frac{1}{2}},\right.\\
&  \left.  \chs 0 {\frac{1}{2}} {\frac{1}{2}} {\frac{1}{2}} {\frac{1}{2}} 0, \chs 0 {\frac{1}{2}} 0 {\frac{1}{2}}
{\frac{1}{2}} {\frac{1}{2}}\right\}.\\
\end{split}
\]
The sum of all characteristics of the above set gives the zero characteristic. The sums of three characteristics give
the rest of the 35 even characteristics and the sums of two characteristics give 21 odd characteristics.
\end{rem}
It can be shown that one of the even theta constants is zero. Let's pick $S = \{1,2,3,4,5,6,7\}$ and $U = \{1,3,5,7\}.$
Let $T = U.$ Then By Theorem ~\ref{vanishingProperty} the theta constant corresponding to the characteristic $\e_T
=\chs {\frac{1}{2}} {\frac{1}{2}} {\frac{1}{2}} {\frac{1}{2}} 0 {\frac{1}{2}} $ is zero. That is $\T_{12} = 0.$ Next,
we  give the relation between theta characteristics and branch points of the genus 3 hyperelliptic curve in the same
way we did in the genus 2 case. Once again, Thomae's formula is used to get these relations. We get 35 equations with
branch points and non-zero even theta constants. By picking suitable equations, we were able to express branch points
in terms of thetanulls similar to Picard's formula for genus $2$ curves. Let $ B = \{a_1 , a_2 , a_3 , a_4 , a_5 , 1 ,
0\}$ be the finite branch points of the curves and $U = \{a_1, a_3, a_5, 0\}$ be the set of odd branch points.
\begin{thm}
Any genus 3 hyperelliptic curve is isomorphic to a curve given by the equation
\[Y^2=X(X-1)(X-a_1)(X-a_2)(X-a_3)(X-a_4)(X-a_5), \] where
%\begin{small}
\[ a_1 =\frac{\T_{31}^2\T_{21}^2}{\T_{34}^2\T_{24}^2}, \, \,  a_2=
\frac{\T_{31}^2\T_{13}^2}{\T_{9}^2\T_{24}^2}, \, \,  a_3 = \frac{\T_{11}^2\T_{31}^2}{\T_{24}^2\T_{6}^2}, \, \,   a_4 =
\frac{\T_{21}^2\T_{7}^2}{\T_{15}^2\T_{34}^2}, \, \, a_5= \frac{\T_{13}^2\T_{1}^2}{\T_{26}^2\T_{9}^2}.
\]
%\end{small}
\end{thm}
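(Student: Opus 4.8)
The plan is to imitate, for $g=3$, the proof of Picard's lemma given above for $g=2$: apply Thomae's formula (Lemma~\ref{Thomae}) to express the fourth powers of the relevant even thetanulls as products of differences of branch points, and then assemble ratios in which the universal constant $A$ and all the difference factors cancel, leaving exactly $a_i^2$. Taking square roots, with the sign ambiguity absorbed by the choice of an isomorphic model of the curve, will then yield the stated formulas.

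First I would fix the indexing of the branch points by position $1\mapsto a_1,\dots,5\mapsto a_5,\ 6\mapsto 1,\ 7\mapsto 0$, together with $\e(\infty)=0$, so that the set of odd branch points is $U=\{a_1,a_3,a_5,0\}$ with $\#U=g+1=4$. Via the map $\e$ and the rule $\e_T=\sum_{a_k\in T}\e(k)$, each even-cardinality subset $T\subset B$ determines a half-integer characteristic, hence one of the thetanulls $\T_i$ listed above. By Theorem~\ref{vanishingProperty}, $\T[\e_T]\neq 0$ precisely when $\#(T\triangle U)=4$; since $\#U=4$ this forces $\#(T\cap U)=\tfrac12\#T$, which singles out the $35$ nonvanishing even characteristics and excludes the singular one $\T_{12}$. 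The first bookkeeping step is therefore to identify, for each of the twelve thetanulls appearing in the statement, namely $\T_1,\T_6,\T_7,\T_9,\T_{11},\T_{13},\T_{15},\T_{21},\T_{24},\T_{26},\T_{31},\T_{34}$, the precise subset $T$ of branch points realizing its characteristic.

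Next, for each such $T$ I would write out
\[
\T[\eta_T](0;\t)^4 = (-1)^{\#T\cap U}\,A\prod_{i<j,\ i,j\in T\triangle U}(\a_i-\a_j)\prod_{i<j,\ i,j\notin T\triangle U}(\a_i-\a_j),
\]
obtaining the genus-$3$ analogue of the ten displayed equations \eqref{Thomaeg=2}. Then, exactly as in the $g=2$ computation where one checks $\frac{\T_1^4\T_3^4}{\T_2^4\T_4^4}=\l^2$, I would verify that
\[
\frac{\T_{31}^4\T_{21}^4}{\T_{34}^4\T_{24}^4}=a_1^2,\qquad
\frac{\T_{31}^4\T_{13}^4}{\T_9^4\T_{24}^4}=a_2^2,\qquad
\frac{\T_{11}^4\T_{31}^4}{\T_{24}^4\T_6^4}=a_3^2,
\]
\[
\frac{\T_{21}^4\T_7^4}{\T_{15}^4\T_{34}^4}=a_4^2,\qquad
\frac{\T_{13}^4\T_1^4}{\T_{26}^4\T_9^4}=a_5^2,
\]
by confirming in each ratio that the factor $A$, the sign $(-1)^{\#T\cap U}$, and every difference $(\a_i-\a_j)$ occurring in the numerator is matched by an identical factor in the denominator. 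Extracting square roots then gives each $a_i$ as a ratio of squared thetanulls up to sign; as in the $g=2$ case each sign choice produces an isomorphic curve, so the displayed representatives may be selected, completing the proof.

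The main obstacle I anticipate is purely combinatorial: correctly pinning down the twelve subsets $T$ and then certifying the total cancellation of the difference products in each of the five ratios. Unlike the $g=2$ case, there are $35$ nonvanishing even thetanulls and many candidate quadruples $T\triangle U$, so the matching of the factors $(\a_i-\a_j)$ between numerator and denominator must be tracked with care; a single sign error or a mismatched difference would destroy the clean identity $a_i^2=(\cdots)^2$. Once the correspondence between characteristics and subsets is fixed, however, the cancellation is mechanical, just as in the genus-$2$ model.
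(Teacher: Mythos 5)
Your proposal is correct and follows essentially the same route as the paper's proof: both invoke Thomae's formula with $U=\{a_1,a_3,a_5,0\}$ to express the relevant even thetanulls as $\pm A$ times products of branch-point differences, observe that in each of the five ratios the constant, signs, and all difference factors cancel except $(a_i-0)^2$, and then fix the square-root sign by passing to an isomorphic model, exactly as in the genus-2 Picard lemma. The only (harmless) difference is one of economy: the paper tabulates all $35$ nonvanishing even thetanulls and lists three candidate ratios per branch point before selecting one, whereas you compute only the twelve thetanulls actually needed.
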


\proof Thomae's formula expresses the thetanulls in terms of branch points of hyperelliptic curves. To invert the
period map we are going to use Lemma~\ref{Thomae}.  For simplicity we order the branch points in the order of $a_1,
a_2, a_3, a_4, a_5, 0, 1,$ and $\infty$. Then the following set of equations represents the relations of theta
constants and $a_1,$ $\dots,$ $a_5.$ We use the notation $(i,j)$ for $(a_i-a_j)$.
%
%\begin{small}
\[
\begin{split}
{\T_{{1}}}^{4} & = A \,\left(1,6 \right) \left(3,6 \right) \left(5,6 \right) \left( 1 ,3 \right)  \left( 1 ,5 \right)
\left( 3, 5 \right) \left( 2 ,4
\right) \left( 2 ,7 \right)  \left( 4 ,7 \right) \\
{\T_{{2}}}^{4} & =- A \, \left( 3,6 \right) \left( 5,6 \right) \left( 3 ,5 \right)  \left( 1 , 2 \right) \left( 1 ,4
\right) \left(2 ,4 \right) \left( 3 , 7 \right)  \left( 5 ,7 \right) \\
{\T_{{3}}}^{4} & = A \, \left( 3,6 \right)\left( 4,6 \right) \left( 3 , 4 \right) \left( 1, 2 \right) \left( 1 , 5
\right) \left( 2 ,5 \right) \left( 1 ,7 \right) \left( 2,7 \right)  \left( 5 ,7 \right) \\
{\T_{{4}}}^{4} & = -A \, \left( 2,6 \right)\left( 3,6 \right)\left(5,6  \right) \left( 2 ,3 \right) \left( 2 ,5 \right)
\left( 3 ,5 \right) \left(1, 4 \right)  \left( 1 ,7 \right)  \left( 4 ,7 \right) \\
{\T_{{5}}}^{4} & = A \, \left(4,6  \right)\left( 5,6 \right) \left( 4,5 \right) \left( 1 ,2 \right) \left(1, 3 \right)
\left( 2 ,3\right) \left( 1 , 7 \right) \left( 2 ,7 \right)  \left( 3 ,7 \right) \\
{\T_{{6}}}^{4} & = A \, \left( 1,6 \right) \left( 2,6 \right) \left( 3 ,4 \right)  \left( 3 ,5 \right) \left( 4 ,5
\right) \left( 1 ,2
\right) \left( 1 , 7 \right)  \left( 2 ,7 \right) \\
\end{split}
\]
\[
\begin{split}
{\T_{{7}}}^{4} & = A \, \left( 2,6 \right)\left( 3,6 \right)\left( 4,6 \right) \left( 1 ,5 \right) \left( 2 ,3 \right)
\left( 2 ,4 \right)  \left( 3 ,4
\right)  \left( 1 , 7 \right) \left( 5 ,7 \right) \\
{\T_{{8}}}^{4} & = A \, \left( 2,6 \right)\left( 3,6 \right) \left( 2 ,3 \right) \left( 1 , 4 \right) \left( 1 ,5
\right)  \left( 4 ,5
\right)  \left( 1 ,7 \right) \left( 4 , 7 \right) \left( 5 ,7 \right) \\
{\T_{{9}}}^{4} & = -A \, \left( 1,6 \right)\left( 3,6 \right) \left( 1 ,3 \right)  \left( 2 ,4 \right) \left( 2 ,5
\right) \left( 4 ,5
\right) \left( 1, 7 \right) \left( 3 ,7 \right) \\
{\T_{{10}}}^{4} & = -A \,\left( 3,6 \right) \left( 5,6 \right) \left( 3 ,5 \right)  \left( 1 ,2 \right) \left( 1, 4
\right) \left( 2 ,4
\right) \left( 1 ,7 \right)  \left( 2 ,7 \right) \left( 4 , 7 \right) \\
 {\T_{{11}}}^{4} & = -A \, \left(3,6  \right)\left( 4,6 \right) \left( 5,6 \right) \left( 3 ,4 \right)
\left( 3 ,5 \right)  \left( 4 ,5 \right) \left(
1 ,2 \right)  \left( 1, 7 \right)  \left( 2 ,7 \right) \\
{\T_{{13}}}^{4} & = A \, \left( 2,6 \right)\left(  4,6\right)\left(  5,6\right) \left( 1 ,3 \right) \left( 2 ,4 \right)
\left( 2 ,5 \right) \left( 4 ,5
\right) \left( 1 ,7 \right) \left( 3 ,7 \right) \\
{\T_{{14}}}^{4} & = A \, \left( 2,6 \right)\left( 5,6 \right) \left( 2 ,5 \right) \left( 1 ,3 \right) \left( 1 , 4
\right)  \left( 3 ,4
\right)  \left( 1 ,7 \right) \left( 3 ,7 \right) \left( 4 ,7 \right) \\
{\T_{{15}}}^{4} & = -A \, \left( 1,6 \right) \left( 5,6 \right) \left( 1 ,5 \right) \left( 2 ,3 \right) \left( 2 ,4
\right) \left( 3 ,4
\right) \left( 1 , 7 \right) \left( 5 ,7 \right) \\
{\T_{{16}}}^{4} & = A \, \left( 1,6 \right) \left( 2 ,3 \right)  \left( 2 ,4 \right) \left( 2 ,5 \right) \left( 3 ,4
\right) \left( 3 ,5 \right) \left( 4 , 5 \right) \left( 1 ,7 \right)
\\
{\T_{{17}}}^{4} & = A \, \left( 1,6 \right) \left( 4,6 \right) \left( 2 ,3 \right) \left( 2 ,5 \right) \left( 3 ,5
\right) \left( 1 , 4
\right) \left( 1 ,7 \right) \left( 4 ,7 \right) \\
{\T_{{18}}}^{4} & = -A \, \left(2,6  \right) \left( 4,6 \right) \left( 1 ,3 \right) \left( 1 ,5 \right) \left( 3 ,5
\right) \left( 2 ,4
\right) \left( 1 ,7 \right) \left( 3 ,7 \right) \left( 5, 7 \right) \\
{\T_{{19}}}^{4} & = A \, \left( 3,6 \right)\left( 4,6 \right) \left( 1 ,2 \right) \left( 1 ,5 \right) \left( 2 ,5
\right) \left( 3 ,4
\right)  \left( 3 , 7 \right) \left( 4 , 7 \right) \\
{\T_{{20}}}^{4} & = -A \, \left(  2,6\right) \left( 1 ,3 \right) \left( 1 , 4 \right) \left( 1 , 5 \right) \left( 3 ,4
\right) \left( 3 ,5 \right) \left( 4 , 5 \right) \left( 2 , 7 \right)
\\
{\T_{{21}}}^{4} & = -A \, \left( 1,6 \right)\left( 4,6 \right)\left( 5,6 \right) \left( 1 ,4 \right) \left( 1 ,5
\right) \left( 4 ,5 \right) \left( 2 ,3
\right) \left( 2 , 7 \right) \left( 3 ,7 \right) \\
{\T_{{22}}}^{4} & = -A \, \left( 1,6 \right)\left( 3,6 \right)\left( 4,6 \right) \left( 1 , 3 \right) \left( 1 , 4
\right) \left( 3 ,4 \right) \left( 2 ,5
\right)  \left( 2 , 7 \right) \left( 5 , 7 \right) \\
{\T_{{23}}}^{4} & = A \, \left( 1,6 \right)\left( 2,6 \right) \left(3 ,4 \right)  \left( 3 ,5 \right) \left( 4 , 5
\right) \left( 1 ,2
\right)  \left( 3 , 7 \right)  \left( 4, 7 \right) \left( 5, 7 \right) \\
{\T_{{24}}}^{4} & = A \, \left( 4,6 \right)\left( 5,6 \right) \left( 1 ,2 \right) \left( 1 ,3 \right) \left( 2, 3
\right) \left( 4 ,5
\right) \left( 4 , 7 \right) \left( 5 ,7 \right) \\
{\T_{{25}}}^{4} & = A \, \left( 3,6 \right) \left( 1 ,2 \right)  \left( 1 ,4 \right) \left( 1 ,5 \right)
\left( 2 ,4 \right) \left( 2 ,5 \right)  \left( 4 ,5 \right) \left( 3 ,7 \right) \\
{\T_{{26}}}^{4} & =  -A \, \left( 2,6 \right)\left( 4,6 \right) \left( 1 ,3 \right) \left( 1 ,5 \right) \left( 3 ,5
\right) \left( 2 ,4
\right) \left( 2 ,7 \right) \left( 4 ,7 \right) \\
{\T_{{27}}}^{4} & = -A \, \left( 1,6 \right)\left( 5,6 \right) \left( 1 ,5 \right)  \left( 2 ,3 \right) \left( 2 ,4
\right) \left( 3 ,4
\right)  \left( 2 ,7 \right)  \left( 3 ,7 \right) \left( 4 , 7 \right) \\
{\T_{{28}}}^{4} & = -A \, \left( 1,6 \right)\left( 3,6 \right) \left( 1 ,3 \right) \left( 2 ,4 \right) \left( 2 ,5
\right) \left( 4 ,5
\right)  \left( 2 ,7 \right)  \left( 4 ,7 \right) \left( 5 ,7 \right) \\
{\T_{{29}}}^{4} & = A \, \left( 1,6 \right)\left( 2,6 \right)\left( 4,6 \right) \left( 3 ,5 \right) \left( 1 ,2 \right)
\left( 1 ,4 \right) \left( 2, 4
\right) \left( 3, 7 \right)  \left( 5 ,7 \right) \\
{\T_{{30}}}^{4} & = A \, \left(5,6  \right) \left( 1 ,2 \right) \left( 1,3 \right) \left( 1 ,4 \right) \left(
2, 3 \right) \left( 2 ,4 \right) \left( 3,4 \right) \left( 5 , 7 \right) \\
{\T_{{31}}}^{4} & = -A \, \left( 1,6 \right)\left( 2,6 \right)\left(  3,6 \right) \left( 1, 2 \right) \left( 1 ,3
\right) \left( 2 ,3 \right) \left( 4 ,5
\right) \left( 4 ,7 \right) \left( 5 ,7 \right) \\
{\T_{{32}}}^{4} & = A \, \left( 1,6 \right)\left( 4,6 \right) \left( 2 ,3 \right) \left( 2, 5 \right) \left( 3 ,5
\right) \left( 1 , 4
\right) \left( 2 , 7 \right) \left( 3, 7 \right) \left( 5 , 7 \right) \\
{\T_{{33}}}^{4} & = A  \, \left( 2,6 \right)\left( 5,6 \right) \left( 1 ,3 \right) \left( 1 ,4 \right) \left( 3, 4
\right) \left( 2 ,5
\right) \left( 2 , 7 \right) \left( 5 ,7 \right) \\
{\T_{{34}}}^{4} & = A \, \left(  2,6\right)\left( 3,6 \right) \left( 1 ,4 \right)  \left( 1 , 5 \right) \left( 4, 5
\right) \left( 2 ,3
\right) \left( 2 ,7 \right) \left( 3 , 7 \right) \\
{\T_{{35}}}^{4} & = -A \, \left( 4,6 \right) \left( 1 ,2 \right) \left( 1 ,3 \right) \left( 1, 5 \right) \left( 2, 3
\right) \left( 2 ,5 \right) \left( 3, 5 \right) \left( 4 ,7 \right)
\\
{\T_{{36}}}^{4} & = -A \, \left(  1,6\right)\left( 2,6 \right)\left( 5,6 \right) \left( 1 ,2 \right) \left( 1 ,5
\right) \left( 2 ,5 \right) \left( 3 ,4
\right) \left( 3, 7 \right) \left( 4 ,7 \right) \\
\end{split}
\]
%\]
%\end{small}

\noindent Our expectation is to write down the branch points as quotients of thetanulls. By using the set of equations
given above we have several choices for $a_1,\dots,a_5$ in terms of theta constants.
\begin{center}
\begin{tabular}{c c c c}
Branch Points &  \multicolumn {3}{c} {Possible Ratios}\\[5pt]
$a_1^2$ & $\left(\frac{\T_{36}^2\T_{22}^2}{\T_{33}^2\T_{19}^2}\right)^2$, &
$\left(\frac{\T_{31}^2\T_{21}^2}{\T_{34}^2\T_{24}^2}\right)^2$, &
$\left(\frac{\T_{29}^2\T_{1}^2}{\T_{26}^2\T_{2}^2}\right)^2$ \\
%
%\hline
 $a_2^2$ & $\left(\frac{\T_{4}^2\T_{29}^2}{\T_{2}^2\T_{17}^2}\right)^2$, &
 $\left(\frac{\T_{36}^2\T_{7}^2}{\T_{15}^2\T_{19}^2}\right)^2$,
&
$\left(\frac{\T_{31}^2\T_{13}^2}{\T_{9}^2\T_{24}^2}\right)^2$ \\
$a_3^2$ & $\left(\frac{\T_{4}^2\T_{22}^2}{\T_{33}^2\T_{17}^2}\right)^2$, &
$\left(\frac{\T_{11}^2\T_{31}^2}{\T_{24}^2\T_{6}^2}\right)^2$, &
$\left(\frac{\T_{7}^2\T_{1}^2}{\T_{26}^2\T_{15}^2}\right)^2$ \\
$a_4^2$ & $\left(\frac{\T_{11}^2\T_{29}^2}{\T_{2}^2\T_{6}^2}\right)^2$, &
$\left(\frac{\T_{21}^2\T_{7}^2}{\T_{15}^2\T_{34}^2}\right)^2$, &
$\left(\frac{\T_{22}^2\T_{13}^2}{\T_{9}^2\T_{33}^2}\right)^2$ \\
$a_5^2$ & $\left(\frac{\T_{4}^2\T_{21}^2}{\T_{34}^2\T_{17}^2}\right)^2$, &
$\left(\frac{\T_{11}^2\T_{36}^2}{\T_{19}^2\T_{6}^2}\right)^2$, &
$\left(\frac{\T_{13}^2\T_{1}^2}{\T_{26}^2\T_{9}^2}\right)^2$ \\
\end{tabular}
\end{center}
%\end{small}
%
Let's select the following choices for $a_1, \cdots, a_5$:
\[
a_1 =  \frac{\T_{31}^2\T_{21}^2}{\T_{34}^2\T_{24}^2},  \, \,
 a_2=
\frac{\T_{31}^2\T_{13}^2}{\T_{9}^2\T_{24}^2}, \, \,  a_3 =\frac{\T_{11}^2\T_{31}^2}{\T_{24}^2\T_{6}^2},  \, \, a_4 =
\frac{\T_{21}^2\T_{7}^2}{\T_{15}^2\T_{34}^2},  \quad a_5= \frac{\T_{13}^2\T_{1}^2}{\T_{26}^2\T_{9}^2}.
\]
This completes the proof.
\endproof

\begin{rem}
{i)} Unlike the genus 2 case, here only $\T_1,$ $ \T_6,$ $ \T_7,$ $\T_{11},$ $ \T_{15},$ $ \T_{24},$ $ \T_{31}$ are
from the same G\"opel group.

{ii)} For genus 2 case such relations are known as Picard's formulae. The calculations proposed by Gaudry on genus 2
arithmetic on theta function in cryptography is mainly based on Picard's formulae.
\end{rem}

\subsection{Theta Identities for Hyperelliptic Curves}
Similar to the genus 2 case we can find identities that hyperelliptic theta constants are satisfied. We would like to
find a set of identities that contains all possible even theta constants. A G\"opel group, Eq.~\eqref{eq1} and
Eq.~\eqref{eq2} all play a main role in this task. Now consider a G\"opel group for genus 3 curves. Any G\"opel group
$G$ contains $2^3=8$ elements. The number of such G\"opel groups is $135.$ We have $24$ G\"opel groups such that all of
the characteristics of the groups are even. The following is one of the G\"opel groups which has only even
characteristics:
%%%%%%%%%%%%%%%%%%%%%%%%%%%%%%%%%%
%\begin{small}
\[
  \begin{split}
  G = & \left\{ c_1 = \chs {0}{0}{0}{0} 0 0, c_2 = \chs {\frac{1}{2}} {\frac{1}{2}} {\frac{1}{2}} 0 0 0,c_3 \chs
{\frac{1}{2}} 0 0 0 0 0, c_4 = \chs 0 {\frac{1}{2}} 0 0 0 0, c_5 = \chs 0 {\frac{1}{2}} {\frac{1}{2}} 0 0 0, \right. \\
& \left.c_6 = \chs {\frac{1}{2}} 0 {\frac{1}{2}} 0 0 0, c_7 = \chs {\frac{1}{2}} {\frac{1}{2}} 0 0 0 0 , c_8 = \chs 0 0
{\frac{1}{2}} 0 0 0 \right\}.
\end{split}
\]
%\end{small}

\noindent By picking suitable characteristics $\bn_1,$ $\bn_2,$  and $\bn_3$ we can find the G\"opel systems  for group
$G.$ Let's pick $\bn_1 = \chs 0 0 0 {\frac{1}{2}} {\frac{1}{2}} 0,$ $\bn_2 = \chs 0 0 0 0 {\frac{1}{2}} 0,$ and $\bn_3
= \chs 0 0 0 {\frac{1}{2}} 0 {\frac{1}{2}},$ then the corresponding G\"opel systems are given by the following:

%\begin{small}
\[
  \begin{split}
  G = & \left\{ \chs {0}{0}{0}{0} 0 0, \chs {\frac{1}{2}} {\frac{1}{2}} {\frac{1}{2}} 0 0 0, \chs
{\frac{1}{2}} 0 0 0 0 0, \chs 0 {\frac{1}{2}} 0 0 0 0, \chs 0 {\frac{1}{2}} {\frac{1}{2}} 0 0 0, \right. \\
& \left.\chs {\frac{1}{2}} 0 {\frac{1}{2}} 0 0 0, \chs {\frac{1}{2}} {\frac{1}{2}} 0 0 0 0 , \chs 0 0 {\frac{1}{2}} 0 0
0 \right\},\\
 \bn_1 G = & \left\{ \chs 0 0 0 {\frac{1}{2}} {\frac{1}{2}} 0, \chs {\frac{1}{2}} {\frac{1}{2}} {\frac{1}{2}} {\frac{1}{2}} {\frac{1}{2}}
 0,\chs {\frac{1}{2}} 0 0 {\frac{1}{2}} {\frac{1}{2}} 0,\chs 0 {\frac{1}{2}} 0 {\frac{1}{2}} {\frac{1}{2}} 0,
 \chs 0 {\frac{1}{2}} {\frac{1}{2}} {\frac{1}{2}} {\frac{1}{2}} 0, \right.\\
  & \left. \chs{\frac{1}{2}} 0 {\frac{1}{2}} {\frac{1}{2}} {\frac{1}{2}}
 0, \chs {\frac{1}{2}} {\frac{1}{2}} 0 {\frac{1}{2}} {\frac{1}{2}} 0, \chs 0 0 {\frac{1}{2}} {\frac{1}{2}} {\frac{1}{2}} 0 \right\},\\
  \bn_2 G = & \left\{ \chs 0 0 0 0 {\frac{1}{2}} 0, \chs {\frac{1}{2}} {\frac{1}{2}} {\frac{1}{2}} 0 {\frac{1}{2}} 0 ,\chs {\frac{1}{2}} 0 0 0 {\frac{1}{2}}
 0, \chs 0 {\frac{1}{2}} 0 0 {\frac{1}{2}} 0, \chs 0 {\frac{1}{2}} {\frac{1}{2}} 0 {\frac{1}{2}} 0,\right.\\
 & \left.\chs{\frac{1}{2}} 0 {\frac{1}{2}} 0 {\frac{1}{2}}
 0,\chs {\frac{1}{2}} {\frac{1}{2}} 0 0 {\frac{1}{2}} 0, \chs 0 0 {\frac{1}{2}} 0 {\frac{1}{2}} 0 \right\},\\
 \bn_3 G = & \left\{\chs 0 0 0 {\frac{1}{2}} 0 {\frac{1}{2}}, \chs {\frac{1}{2}} {\frac{1}{2}} {\frac{1}{2}} {\frac{1}{2}} 0
 {\frac{1}{2}},\chs {\frac{1}{2}} 0 0 {\frac{1}{2}} 0 {\frac{1}{2}}, \chs 0 {\frac{1}{2}} 0 {\frac{1}{2}} 0
 {\frac{1}{2}}, \chs 0 {\frac{1}{2}} {\frac{1}{2}} {\frac{1}{2}} 0 {\frac{1}{2}}, \right.\\
 & \left.\chs {\frac{1}{2}} 0 {\frac{1}{2}} {\frac{1}{2}} 0
 {\frac{1}{2}}, \chs {\frac{1}{2}} {\frac{1}{2}} 0 {\frac{1}{2}} 0 {\frac{1}{2}}, \chs 0 0 {\frac{1}{2}} {\frac{1}{2}} 0 {\frac{1}{2}}\right\},\\
\bn_1 \bn_2 G = & \left\{\chs 0 0 0 {\frac{1}{2}} 0 0, \chs {\frac{1}{2}} {\frac{1}{2}} {\frac{1}{2}} {\frac{1}{2}} 0
0, \chs {\frac{1}{2}} 0 0 {\frac{1}{2}} 0 0, \chs 0 {\frac{1}{2}} 0 {\frac{1}{2}} 0 0, \chs 0 {\frac{1}{2}}
{\frac{1}{2}} {\frac{1}{2}} 0 0 , \right.\\
 & \left.\chs {\frac{1}{2}} 0 {\frac{1}{2}} {\frac{1}{2}} 0 0, \chs {\frac{1}{2}}
{\frac{1}{2}} 0 {\frac{1}{2}} 0 0 , \chs 0 0 {\frac{1}{2}} {\frac{1}{2}} 0 0 \right\},\\
\bn_1 \bn_3 G = & \left\{\chs 0 0 0 0 {\frac{1}{2}} {\frac{1}{2}}, \chs {\frac{1}{2}} {\frac{1}{2}} {\frac{1}{2}} 0
{\frac{1}{2}} {\frac{1}{2}}, \chs {\frac{1}{2}} 0 0 0 {\frac{1}{2}} {\frac{1}{2}}, \chs 0 {\frac{1}{2}} 0 0
{\frac{1}{2}} {\frac{1}{2}}, \chs 0{\frac{1}{2}} {\frac{1}{2}} 0 {\frac{1}{2}} {\frac{1}{2}}, \right.\\
 & \left.\chs {\frac{1}{2}} 0
{\frac{1}{2}} 0 {\frac{1}{2}} {\frac{1}{2}} , \chs {\frac{1}{2}} {\frac{1}{2}} 0 0 {\frac{1}{2}} {\frac{1}{2}}, \chs 0
0 {\frac{1}{2}} 0 {\frac{1}{2}} {\frac{1}{2}}\right\},\\
\bn_2 \bn_3 G = & \left\{ \chs 0 0 0 {\frac{1}{2}} {\frac{1}{2}} {\frac{1}{2}}, \chs {\frac{1}{2}} {\frac{1}{2}}
{\frac{1}{2}} {\frac{1}{2}} {\frac{1}{2}} {\frac{1}{2}}, \chs {\frac{1}{2}} 0 0 {\frac{1}{2}} {\frac{1}{2}}
{\frac{1}{2}}, \chs 0 {\frac{1}{2}} 0 {\frac{1}{2}} {\frac{1}{2}} {\frac{1}{2}}, \chs 0 {\frac{1}{2}} {\frac{1}{2}}
{\frac{1}{2}} {\frac{1}{2}} {\frac{1}{2}},\right.\\
 & \left. \chs {\frac{1}{2}} 0 {\frac{1}{2}} {\frac{1}{2}} {\frac{1}{2}}
{\frac{1}{2}}, \chs {\frac{1}{2}} {\frac{1}{2}} 0 {\frac{1}{2}} {\frac{1}{2}} {\frac{1}{2}}, \chs 0 0 {\frac{1}{2}}
{\frac{1}{2}} {\frac{1}{2}} {\frac{1}{2}}\right\},\\
\bn_1 \bn_2 \bn_3 G = & \left\{ \chs 0 0 0 0 0 {\frac{1}{2}}, \chs {\frac{1}{2}} {\frac{1}{2}} {\frac{1}{2}} 0 0
{\frac{1}{2}}, \chs {\frac{1}{2}} 0 0 0 0 {\frac{1}{2}}, \chs 0 {\frac{1}{2}} 0 0 0 {\frac{1}{2}}, \chs 0 {\frac{1}{2}}
{\frac{1}{2}} 0 0 {\frac{1}{2}}, \right.\\
  & \left.\chs {\frac{1}{2}} 0 {\frac{1}{2}} 0 0 {\frac{1}{2}}, \chs {\frac{1}{2}} {\frac{1}{2}}
0 0 0 {\frac{1}{2}}, \chs 0 0 {\frac{1}{2}} 0 0 {\frac{1}{2}}\right\}.\\
\end{split}
\]
%\end{small}

The above G\"opel systems contain all 64 characteristics for genus 3. Except for the G\"opel group, each of the systems
contains 4 odd characteristics and 4 even characteristics. If $\hn$ denotes one of the characteristics from the G\"opel
group other than $\chs {0}{0}{0}{0} 0 0,$ then $|\en \hn| \equiv |\en| \equiv 0 \mod 2$ has $20$ solutions.
\begin{exa}
If $\hn = \chs {\frac{1}{2}} {\frac{1}{2}} {\frac{1}{2}} 0 0 0 $, then all the characteristics of $G$ and all the even
characteristics of the G\"opel systems of $\bn_1 G,$ $\bn_3 G$ and $\bn_1 \bn_3 G$ are the possible characteristics for
$\en.$ There are 20 of them.
\end{exa}
Without loss of generality, take the $10$ possible choices for $\en$ which give rise to different terms in the series
Eq.~\eqref{eq1} and Eq.~\eqref{eq2}. For each $\hn$ in the G\"opel group other than $\chs {0}{0}{0}{0} 0 0,$ we can
choose $\an$ such that $|\an,\hn| + |\hn| \equiv 0 \mod 2.$ Take $\an$ to be respectively $\bn_1,$ $\bn_2,$ $\bn_3,$
$\bn_1 \bn_2,$ $\bn_1 \bn_3,$ $\bn_2 \bn_3,$ and $\bn_1 \bn_2 \bn_3$ to the cases when $\hn$ is equal to the
characteristics $c_2,$ $c_3,$ $c_4,$ $c_5,$ $c_6,$ $c_7,$ and  $c_8$ respectively. By picking  $\an$ and $\hn$ with
these characteristics, we can obtain formulas which express the zero values of all the even theta functions in terms of
8 theta nulls: $\T_1,$ $\T_3,$ $\T_{10},$ $\T_{14},$ $\T_{18},$ $\T_{22},$ $\T_{29},$ $\T_{36}.$
We obtain the following 14 equations. The first set is obtained by using Eq.~\eqref{eq1}; all the computations are done
by using Maple 10,

\[
\begin{split}
& 3\,{\T_{{13}}}^{2}{\T_{{23}}}^{2}-{\T_{{28}}}^{2}{\T_{{11}}}^{2}-{\T_{{34}}}^{2}{\T_{{12}
}}^{2}+{\T_{{35}}}^{2}{\T_{{15}}}^{2}+{\T_{{
24}}}^{2}{\T_{{16}}}^{2}-{\T_{{30}}}^{2}{\T_{{17}}}^{2}={\T_{{3}}}^{2}{\T_{{1}}}^{2}\\
&-{\T_{{22}}}
^{2}{\T_{{10}}}^{2}-{\T_{{29}}}^{2}{\T_{{14}}}^{2}+{\T_{{36}}}^{2}{\T_ {{18}}}^{2},\\ %eq 1
&3\,{\T_{{21}}}^{2}{\T_{{5}}}^{2}+{\T_{{20}}}^{2}{\T_{{6}}}^{2}-{\T_{{31}}}^{2}{\T_{{8}}}^
{2}-{\T_{{25}}}^{2}{\T_{{9}}}^{2}+{\T_{{30}}
}^{2}{\T_{{15}}}^{2}-{\T_{{35}}}^{2}{\T_{{17}}}^{2}={\T_{{10}}}^{2}{\T_{{1}}}^{2}\\
& -{\T_{{22}}}
^{2}{\T_{{3}}}^{2}-{\T_{{36}}}^{2}{\T_{{14}}}^{2}+{\T_{{29}}}^{2}{\T_{{18 }}}^{2},\\ %eq 2
&3\,{\T_{{34}}}^{2}{\T_{{16}}}^{2}-{\T_{{27}}}^{2}{\T_{{4}}}^{2}+{\T_{{25}}}^{2}{\T_{{6}}}
^{2}+{\T_{{32}}}^{2}{\T_{{7}}}^{2}-{\T_{{20}}}^{2}{\T_{{9}}}^{2}-{\T_{{24}}}^{2}{\T_{{12}}}^{2}={\T_{{14}}}^{2}{\T_{{1}}}^{2}\\
&+{\T_{{29}} }^{2}{\T_{{3}}}^{2}-{\T_{{36}}
}^{2}{\T_{{10}}}^{2}-{\T_{{22}}}^{2}{\T_{{18 }}}^{2},\\ %eq 3
&3\,{\T_{{4}}}^{2}{\T_{{32}}}^{2}+{\T_{{31}}}^{2}{\T_{{5}}}^{2}-{\T_{{27}}}^{2}{\T_{{7}}}^
{2}-{\T_{{21}}}^{2}{\T_{{8}}}^{2}+{\T_{{23}}
}^{2}{\T_{{11}}}^{2}-{\T_{{28}}}^{2}{\T_{{13}}}^{2}={\T_{{18}}}^{2}{\T_{{1}}}^{2}\\
&-{\T_{{36}}}
^{2}{\T_{{3}}}^{2}-{\T_{{29}}}^{2}{\T_{{10}}}^{2}+{\T_{{22}}}^{2}{\T_{{14 }}}^{2},\\ %eq 4
&3\,{\T_{{17}}}^{2}{\T_{{15}}}^{2}+{\T_{{19}} }^{2}{\T_{{2}}}^{2}-{\T_{{7}}}^{2}{\T_{{4}}}^
{2}-{\T_{{33}}}^{2}{\T_{{26}}}^{2}+{\T_{{32} }}^{2}{\T_{{27}}}^{2}-{\T_{{35}}}^{2}{\T_{{
30}}}^{2}={\T_{{22}}}^{2}{\T_{{1}}}^{2}\\
&+
{\T_{{10}}}^{2}{\T_{{3}}}^{2}-{\T_{{18}}}^{2}{\T_{{14}}}^{2}-{\T_{{36}}}^{2}{\T_{{29}}}^{2},\\ %eq 5
&3\,{\T_{{26}}}^{2}{\T_{{2}}}^{2}+{\T_{{8}}}^{2}{\T_{{5}}}^{2}+{\T_{{16}}}^{2}{\T_{{12}}}^{2}-{\T_{{33}}}^{2}{\T_{{19}}}^{2}-{\T_{{31}
}}^{2}{\T_{{21}}}^{2}-{\T_{{34}}}^{2}{\T_{{ 24}}}^{2}={\T_{{29}}}^{2}{\T_{{1}}}^{2}\\
&-{\T_{{14}}}
^{2}{\T_{{3}}}^{2}-{\T_{{18}}}^{2}{\T_{{10}}}^ {2}+{\T_{{36}}}^{2}{\T_{{22}}}^{2},\\ %eq 6
&3\,{\T_{{9}}}^{2}{\T_{{6}}}^{2}+{\T_{{33}}}^
{2}{\T_{{2}}}^{2}-{\T_{{13}}}^{2}{\T_{{11}}}^{2}-{\T_{{26}}}^{2}{\T_{{19}}}^{2}-{\T_{{25}
}}^{2}{\T_{{20}}}^{2}+{\T_{{28}}}^{2}{\T_{{ 23}}}^{2}={\T_{{36}}}^{2}{\T_{{1}}}^{2}\\
&+{\T_{{14}}}^{2}{\T_{{10}}}^ {2}-{\T_{{18}}}^{2}{\T_{{3}}}^{2}-{\T_{{29}}}^{2}{\T_{{22}}}^{2}.\\ %eq 7
\end{split}
\]
%%%%%%%%%%%%%%
By using Eq.~\eqref{eq2} we have the following set of equations:
\[
\begin{split}
&3\,{\T_{{13}}}^{4}+3\,{\T_{{23}}}^{4}-{\T_{{28}}}^{4}-{\T_{{11}}}^{4}-{\T_{{34}}}^{4}-
{\T_{{12}}}^{4}+{\T_{{35}}}^{4}+{\T_{{15}}}
^{4}+{\T_{{24}}}^{4}+{\T_{{16}}}^{4}-{\T_{{30}}}^{4}\\
&-{\T_{{17}}}^{4}={\T_{{3}}}^{4}+{\T_{{1}}}^{4}-{\T_{{
22}}}^{4}-{\T_{{10}}}^{4}-{\T_{{29}}}^{4}-{\T_{{14}}}^{4}+{\T_{{ 36}}}^{4}+{\T_{{18}}}^{4},\\ %eq 1
&3\,{\T_{{21}}}^{4}+3\,{\T_{{5}}}^{4}+{\T_{{20}}}^{4}+{\T_{{6}}}^{4}-{\T_{{31}}}^{4}-{\T
_{{8}}}^{4}-{\T_{{25}}}^{4}-{\T_{{9}}}^{4}
+{\T_{{30}}}^{4}+{\T_{{15}}}^{4}-{\T_{{35}}}^{4}\\
&-{\T_{{17}}}^{4}={\T_{{10}}}^{4}+{\T_{{1}}}^{4}-{\T_{{
22}}}^{4}-{\T_{{3}}}^{4}-{\T_{{36}}}^{4}-{\T_{{14}}}^{4}+{\T_{{29}} }^{4}+{\T_{{18}}}^{4},\\ %eq 2
\end{split}
\]
\[
\begin{split}
&3\,{\T_{{34}}}^{4}+3\,{\T_{{16}}}^{4}-{\T_{{27}}}^{4}-{\T_{{4}}}^{4}+{\T_{{25}}}^{4}+{
\T_{{6}}}^{4}+{\T_{{32}}}^{4}+{\T_{{7}}}^{4}-{\T_{{20}}}^{4}-{\T_{{9}}}^{4}
-{\T_{{24}}}^{4}\\
& -{\T_{{12}}}^{4}={\T_{{14}}}^{4}+{\T_{{1}}}^{4}+{\T_{
{29}}}^{4}+{\T_{{3}}}^{4}-{\T_{{36}}}^{4}-{\T_{{10}}}^{4}-{\T_{{22}} }^{4}-{\T_{{18}}}^{4},\\ %eq 3
&3\,{\T_{{4}}}^{4}+3\,{\T_{{32}}}^{4}+{\T_{{31}}}^{4}+{\T_{{5}}}^{4}-{\T_{{27}}}^{4}-{\T
_{{7}}}^{4}-{\T_{{21}}}^{4}-{\T_{{8}}}^{4}
+{\T_{{23}}}^{4}+{\T_{{11}}}^{4}-{\T_{{28}}}^{4}\\
&-{\T_{{13}}}^{4}={\T_{{18}}}^{4}+{\T_{{1}}}^{4}-{\T_{{
36}}}^{4}-{\T_{{3}}}^{4}-{\T_{{29}}}^{4}-{\T_{{10}}}^{4}+{\T_{{22}} }^{4}+{\T_{{14}}}^{4},\\ %eq 4
&3\,{\T_{{17}}}^{4}+3\,{\T_{{15}}}^{4}+{\T_{ {19}}}^{4}+{\T_{{2}}}^{4}-{\T_{{7}}}^{4}-{\T
_{{4}}}^{4}-{\T_{{33}}}^{4}-{\T_{{26}}}^{4 }+{\T_{{32}}}^{4}+{\T_{{27}}}^{4}-{\T_{{35}
}}^{4}\\
&-{\T_{{30}}}^{4}={\T_{{22}}}^{4}+{\T_{{1}}}^{4}
+{\T_{{10}}}^{4}+{\T_{{3}}}^{4}-{\T_{{18}}}^{4}-{\T_{{14}}}^{4}-{\T_{{36}}}^{4}-{\T_{{29}}}^{4},\\ %eq 5
&3\,{\T_{{26}}}^{4}+3\,{\T_{{2}}}^{4}+{\T_{{8}}}^{4}+{\T_{{5}}}^{4}+{\T_{{16}}}^{4}+{\T_{{12}}}^{4}-{\T_{{33}}}^{4}-{\T_{{19}}}^{4
}-{\T_{{31}}}^{4}-{\T_{{21}}}^{4}-{\T_{{34} }}^{4}\\
& -{\T_{{24}}}^{4}={\T_{{29}}}^{4}+{\T_{{1}}}^{4}-{\T_{{ 14}}}^{4}-{\T_{{3}}}^{4}-{\T_{{18}}}^{4}-{\T_
{{10}}}^{4}+{\T_{{36}}}^{4}+{\T_{{22}}}^{4},\\ %eq 6
&3\,{\T_{{9}}}^{4}+3\,{\T_{{6}}}^{4}+{\T_{{
33}}}^{4}+{\T_{{2}}}^{4}-{\T_{{13}}}^{4}-{\T_{{11}}}^{4}-{\T_{{26}}}^{4}-{\T_{{19}}}^{
4}-{\T_{{25}}}^{4}-{\T_{{20}}}^{4}+{\T_{{28
}}}^{4}\\
&+{\T_{{23}}}^{4}={\T_{{36}}}^{4}+{\T_{{1}}}^{4}-{\T_{{18}}}^{4}-{\T_{{3}}}^{4}+{\T_{{14}}}^{4}+{\T
_{{10}}}^{4}-{\T_{{29}}}^{4}-{\T_{{22}}}^{4}.
\end{split}
\]
\begin{rem}
Similar to the genus 2 case we can consider all the G\"opel groups and obtain all possible relations among thetanulls
by following the above procedure. It is tedious and quite long so we don't do it here.
\end{rem}
\subsection{Genus 3 Non-Hyperelliptic Cyclic Curves} Using formulas similar to  Thomae's formula for
each family of cyclic curve $y^n=f(x),$ one can express the roots of $f(x)$ in terms of ratios of theta functions as in
the hyperelliptic case. In this section we study such curves for $g=3$. We only consider the families of curves with
positive dimensions since the curves which belong to 0-dimensional families are well known.
 Notice that the definition of thetanulls is different in this
part from the definitions of thetanulls in the hyperelliptic case. We define the following three theta constants:
\[
\T_1 = \T\chs{0}{\frac{1}{6}}{0}{\frac{2}{3}}{\frac{1}{6}}{\frac{2}{3}}, \quad \T_2 = \T
\chs{0}{\frac{1}{6}}{0}{\frac{1}{3}}{\frac{1}{6}}{\frac{1}{3}}, \quad \T_3 = \T
\chs{0}{\frac{1}{6}}{0}{0}{\frac{1}{6}}{0}.
\]
Next we consider the cases 7, 8 and 5 from Table 3.2. %~\ref{tableNonHyper}.

%\begin{description}
%
\noindent \textbf{Case 7:} If the group is $C_3$, then the equation of $\X$ is given by $$y^3=x(x-1)(x-s)(x-t).$$ Let
$Q_i$ where $i= 1..5$ be ramifying points in the fiber of $0,1,s,t,\infty$ respectively. Consider the meromorpic
function $f = x$ on $\X$ of order 3. Then we have $(f) = 3 Q_1 - 3 Q_5.$ By applying the Lemma~\ref{Shiga} with $P_0 =
Q_5$ and an effective divisor $2Q_2 + Q_3,$ we have the following:

\begin{equation} \label{Shiga1}
E s = \prod_{k=1}^3  \frac{\T( 2\int_{Q_5}^{Q_2} \om  + \int_{Q_5}^{Q_3} \om - \int_{Q_5}^{b_k} \om - \triangle , \tau
)} {\T( 2\int_{Q_5}^{Q_2} \om  + \int_{Q_5}^{Q_3} \om  - \triangle , \tau )}.
\end{equation}
Once again, we apply Lemma~\ref{Shiga} with an effective divisor $Q_2 + 2Q_3$ and we have the following:

\begin{equation} \label{Shiga2}
E s^2 = \prod_{k=1}^3  \frac{\T( \int_{Q_5}^{Q_2} \om  + 2\int_{Q_5}^{Q_3} \om - \int_{Q_5}^{b_k} \om - \triangle ,
\tau )} {\T( \int_{Q_5}^{Q_2} \om  + 2\int_{Q_5}^{Q_3} \om  - \triangle , \tau )}.
\end{equation}

By dividing Eq.~\eqref{Shiga2} by Eq.~\eqref{Shiga1} we have
\begin{equation} \label{s}
\begin{split}
s =&  \prod_{k=1}^3  \frac{\T( \int_{Q_5}^{Q_2} \om  + 2\int_{Q_5}^{Q_3} \om - \int_{Q_5}^{b_k} \om - \triangle
, \tau )} {\T( \int_{Q_5}^{Q_2} \om  + 2\int_{Q_5}^{Q_3} \om  - \triangle , \tau )}  \\
   & \times \prod_{k=1}^3  \frac {\T( 2\int_{Q_5}^{Q_2} \om  + \int_{Q_5}^{Q_3} \om  - \triangle , \tau )}{\T( 2\int_{Q_5}^{Q_2} \om
   + \int_{Q_5}^{Q_3} \om - \int_{Q_5}^{b_k} \om - \triangle
, \tau )}.
\end{split}
\end{equation}
By a similar argument, we have
\begin{equation} \label{t}
\begin{split}
t =&  \prod_{k=1}^3  \frac{\T( \int_{Q_5}^{Q_2} \om  + 2\int_{Q_5}^{Q_4} \om - \int_{Q_5}^{b_k} \om - \triangle
, \tau )} {\T( \int_{Q_5}^{Q_2} \om  + 2\int_{Q_5}^{Q_4} \om  - \triangle , \tau )}  \\
   & \times \prod_{k=1}^3  \frac {\T( 2\int_{Q_5}^{Q_2} \om  + \int_{Q_5}^{Q_4} \om  - \triangle , \tau )}{\T( 2\int_{Q_5}^{Q_2} \om
   + \int_{Q_5}^{Q_4} \om - \int_{Q_5}^{b_k} \om - \triangle , \tau )}.
\end{split}
\end{equation}
Computing the right hand side of Eq.~\eqref{s} and Eq.~\eqref{t} was one of the main points of \cite{SHI}. As a result
we have $s = \frac{\T_2^3}{\T_1^3}$ and $ r = \frac{\T_3^3}{\T_1^3}.$
%
%
%Computing the right hand side of Eq.~\eqref{s} and Eq.~\eqref{t} was the one of the main points of \cite{SH}. Finally
%we have  $s = \frac{\T_2^3}{\T_1^3}$ and $ r = \frac{\T_3^3}{\T_1^3}.$
%
%We need to compute the right hand side of Eq.~\eqref{s} and Eq.~\eqref{t} in terms of theta constant we have the
%result.

\noindent \textbf{Case 8:} If the group is $C_6$, then the equation is $y^3=x(x-1)(x-s)(x-t)$ with $s = 1-t.$ By using
the results from Case 7, we have $\T_2^3 = \T_1^3 - \T_3^3.$

\noindent \textbf{Case 5:} %The results of the part iii) follows from the Nakayashiki's results.
If $\Aut (\X)\iso (16, 13)$, then the equation of $\X$ is given by $$y^4=x(x-1)(x-t).$$ This curve has 4 ramifying
points $Q_i$ where $i= 1..4$ in the fiber of $0,1,t,\infty$ respectively. Consider the meromorpic function $f = x$ on
$\X$ of order 4. Then we have $(f) = 4 Q_1 - 4 Q_4.$ By applying Lemma~\ref{Shiga} with $P_0 = Q_4$ and an effective
divisor $2Q_2 + Q_3$, we have the following:

\begin{equation} \label{Shiga3}
E t = \prod_{k=1}^4  \frac{\T( 2\int_{Q_4}^{Q_2} \om  + \int_{Q_4}^{Q_3} \om - \int_{Q_4}^{b_k} \om - \triangle , \tau
)} {\T( 2\int_{Q_4}^{Q_2} \om  + \int_{Q_4}^{Q_3} \om  - \triangle , \tau )}.
\end{equation}
Once again, we apply Lemma~\ref{Shiga} with an effective divisor $Q_2 + 2Q_3$ and we have the following:

\begin{equation} \label{Shiga4}
E t^2 = \prod_{k=1}^4  \frac{\T( \int_{Q_4}^{Q_2} \om  + 2\int_{Q_4}^{Q_3} \om - \int_{Q_4}^{b_k} \om - \triangle ,
\tau )} {\T( \int_{Q_4}^{Q_2} \om  + 2\int_{Q_4}^{Q_3} \om  - \triangle , \tau )}.
\end{equation}
We have the following by dividing Eq.~\eqref{Shiga4} by Eq.~\eqref{Shiga3}:
\begin{equation}\label{GP16}
\begin{split}
t =& \prod_{k=1}^4  \frac{\T( \int_{Q_4}^{Q_2} \om  + 2\int_{Q_4}^{Q_3} \om - \int_{Q_4}^{b_k} \om -
\triangle , \tau )} {\T( \int_{Q_4}^{Q_2} \om  + 2\int_{Q_4}^{Q_3} \om  - \triangle , \tau )}   \\
   & \times \prod_{k=1}^4 \frac{\T( 2\int_{Q_4}^{Q_2} \om  + \int_{Q_4}^{Q_3} \om  - \triangle , \tau )}  {\T( 2\int_{Q_4}^{Q_2} \om
   + \int_{Q_4}^{Q_3} \om - \int_{Q_4}^{b_k} \om - \triangle
, \tau )}.
\end{split}
\end{equation}
In order to compute the explicit formula for $t,$ one has to find the integrals on the right-hand side. Such
computations are long and tedious and we intend to include them in further work.
%
%\end{description}
%
\begin{rem}
In case 5 of Table 3, the parameter $t$ is given by $$\T[e]^4 = A (t-1)^4 t^2,$$
where $[e]$ is the theta characteristic corresponding to the partition $(\{1\}, \{2\}, \{3\}, \{4\})$ and $A$ is a
constant; see \cite{NK} for details. However, this is not satisfactory since we would like $t$ as a rational function
in terms of theta constants. The method in \cite{NK} does not lead to another relation among $t$ and the thetanulls,
since the only partition we could take is the above.
\end{rem}

Summarizing all of the above, we have
\begin{thm}
Let $\X$ be a non-hyperelliptic genus 3 curve. The following statements are true:

\begin{description}
\item [i)] If $\Aut (\X) \iso C_3$, then $\X$ is isomorphic to a curve with equation
\[ y^3 = x (x-1) \left(x-\frac{\T_2^3}{\T_1^3}\right) \left(x- \frac{\T_3^3}{\T_1^3}\right).\]

\item [ii)] If $\Aut (\X) \iso C_6$, then $\X$ is isomorphic to a curve with equation
\[ y^3 = x (x-1) \left(x-\frac{\T_2^3}{\T_1^3}\right) \left(x- \frac{\T_3^3}{\T_1^3}\right) \,\,\textit{with}\,\,  \T_2^3 = \T_1^3 -
\T_3^3.\]
\item [iii)] If $\Aut (\X)$ is isomorphic to the group with GAP identity (16, 13), then $\X$ is isomorphic to a curve with equation
\[y^4 = x (x-1)(x-t)\,\, \]
where $t$ is given by Eq.~\eqref{GP16}.
\end{description}
\end{thm}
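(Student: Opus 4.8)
The plan is to treat all three parts by a single mechanism: apply Lemma~\ref{Shiga} to the meromorphic function $f = x$ on $\X$, whose divisor is supported entirely on the ramification points lying over $0$ and $\infty$, and read off each finite branch point as a product of quotients of theta values. Because $\langle \s \rangle$ is normal with $g(\X^\s) = 0$, the curve admits the cyclic normal form listed in Table~3, so in each case the data we must recover are exactly the parameters $s, t$ (for $n = 3$) or $t$ (for $n = 4$) appearing in that normal form.

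For part i) I would start from the $C_3$ normal form $y^3 = x(x-1)(x-s)(x-t)$ and label the ramification points $Q_1, \dots, Q_5$ lying over $0, 1, s, t, \infty$. Taking $P_0 = Q_5$ and feeding the effective divisors $2Q_2 + Q_3$ and $Q_2 + 2Q_3$ into Lemma~\ref{Shiga} produces Eq.~\eqref{Shiga1} and Eq.~\eqref{Shiga2} for $Es$ and $Es^2$; dividing eliminates the unknown constant $E$ and yields the closed product Eq.~\eqref{s} for $s$, and the analogous choice with $Q_4$ in place of $Q_3$ gives Eq.~\eqref{t} for $t$. The remaining task is to evaluate these products, which is precisely Shiga's computation; carrying it out collapses the products to $s = \T_2^3/\T_1^3$ and $t = \T_3^3/\T_1^3$ for the thetanulls defined above. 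Equivalently, one may simply invoke Theorem~\ref{CyclicCurvesThm}, Case 1 ii), which is Shiga's formula for $n = 3$, $k = 4$.

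Part ii) is then immediate: the $C_6$ curve is the $C_3$ curve subject to the extra symmetry forcing $s = 1 - t$. Substituting the expressions from part i) gives $\T_2^3/\T_1^3 = 1 - \T_3^3/\T_1^3$, that is $\T_2^3 = \T_1^3 - \T_3^3$, and I would record the equation of part i) with this relation adjoined. For part iii), the $(16,13)$ curve $y^4 = x(x-1)(x-t)$ carries four ramification points $Q_1, \dots, Q_4$ over $0, 1, t, \infty$; applying Lemma~\ref{Shiga} with $f = x$ (now of order $4$) and the divisors $2Q_2 + Q_3$ and $Q_2 + 2Q_3$ gives Eq.~\eqref{Shiga3} and Eq.~\eqref{Shiga4}, whose ratio is exactly the product formula Eq.~\eqref{GP16} for $t$, which is the asserted equation.

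The main obstacle is uniform across the three parts: turning the abstract product formulas Eq.~\eqref{s}, Eq.~\eqref{t}, and Eq.~\eqref{GP16} into explicit functions of the thetanulls requires writing the Abel--Jacobi integrals $\int_{Q_i}^{Q_j}\om$ in terms of explicit half-period characteristics and identifying the resulting theta values with $\T_1, \T_2, \T_3$. For $n = 3$ this identification is exactly the content of \cite{SHI}, so parts i) and ii) are complete once that evaluation is cited. For the order-$4$ case the same integrals are substantially harder to pin down, so for part iii) I would state the branch point only in the implicit form Eq.~\eqref{GP16}, leaving the explicit rational expression in the thetanulls as the genuinely open computation.
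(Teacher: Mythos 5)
Your proposal follows essentially the same route as the paper's own treatment: the same application of Lemma~\ref{Shiga} with $f = x$, the same pairs of effective divisors $2Q_2 + Q_3$ and $Q_2 + 2Q_3$ (and $Q_4$ in place of $Q_3$ for the second parameter), the same division to eliminate the constant $E$, the citation of \cite{SHI} to evaluate the resulting products as $\T_2^3/\T_1^3$ and $\T_3^3/\T_1^3$, the substitution $s = 1-t$ for the $C_6$ case, and the same decision to leave the $(16,13)$ branch point in the implicit form of Eq.~\eqref{GP16} because the order-$4$ Abel--Jacobi integrals remain uncomputed. Your added remark that parts i) and ii) could instead be obtained by invoking Theorem~\ref{CyclicCurvesThm}, Case 1 ii) is consistent with the paper's own algorithm and does not change the substance of the argument.
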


%%%%%%%%%%%%%%%%%%%%%%%%%%%%%%%%%%%%%%%%%%%%%%%%%%%%%%%%%%%%%%%%%%%%%%%%%%%%%%%%%%%%%%%%%%%%%%%%%%%%%%%%%%%%%%%%%%%%%
%%%%%%%%%%%%%%%%%%%%%%%%%%%%%%%%%%%%%%%%%%%%%%%%%%%%%%%%%%%%%%%%%%%%%%%%%%%%%%%%%%%%%%%%%%%%%%%%%%%%%%%%%%%%%%%%%%%%%

\section{Genus 4 curves}
In this section we focus on  genus 4 curves.  For the genus 4 curves, the complete set of all possible full
automorphism groups and the corresponding equations are not completely calculated yet. In this chapter we consider a
few of the cyclic curves of genus 4. Let us first consider the genus 4 hyperelliptic algebraic curves.  For these
curves, we have $2^{g-1} (2^g +1) = 136$ even half-integer characteristics and $2^{g-1} (2^g -1) = 120$ odd
half-integer characteristics. Among the even thetanulls, 10 of them are 0. We won't show the exact information here.
Following the same procedure as for $g=3,$  the branch points of genus 4 hyperelliptic curves can be expressed as
ratios of even theta constants and identities among theta constants can be obtained. The following Table ~\ref{tab_4}
gives some genus 4 non-hyperelliptic cyclic curves; see Table 2 of \cite{SH-KM} for the complete list.
%
%\vspace{-.6cm}
\begin{center}
\begin{table}[h]\label{tab_4}
\caption{Some genus 4 non hyperelliptic  cyclic curves and their automorphisms} \centering
\begin{tabular}{||c|c|c|c||}
\hline \hline
$\#$ & dim & Aut($\X$) & Equation\\
\hline \hline
%
%1 & 0 & (10,2) & $y^5 = x(x^2-1)$ \\
%
1 & 3 & $C_2$ & $y^3 = x(x-1)(x-a_1)(x-a_2)(x-a_3)$\\
2 & 2 &$C_3 \times C_2$ & $y^3 = (x^2-1)(x^2 - \alpha_1)(x^2- \alpha_2)$\\
3 & 1 & $C_5$ & $y^5 = x(x-1)(x-\alpha)$\\
4 & 1& $C_3 \times C_2 $ & $y^3 = (x^2-1)(x^4 - \alpha x^2 +1)$ \\
\hline \hline
\end{tabular}
\end{table}
\end{center}
%\vspace{-1.5cm}
%
The Figure ~\ref{figg=4} shows the inclusions of loci of the genus 4 curves.
\subsection{Inverting the Moduli Map} In this section we will express branch points of each cyclic curve
in Table 4.1
as ratios of theta nulls.

\noindent\textbf{Case 1:} $C : y^3 = x (x-1) (x-a_1) (x-a_2) (x-a_3).$ In this curve $\infty$ is a branch point. We can
use  result of \cite{NK} to find out $a_1, a_2, a_3$ in terms of thetanulls. First we need to find the partitions of
the set $\{1,2,3,4,5,6\}.$ The Table 5 shows all possible partitions of $\{1,2,3,4,5,6\}$ into 3 sets and the labeling
of the corresponding thetanulls.

For each partition we can apply the generalized Thomae's formula to obtain an identity. According to this labeling of
theta constants and the generalized Thomae's formula we have the following relations:
\[
\begin{split}
&{\theta_{{1}}}^{6} = c_1 \left( a_{{1}}-a_{{2}} \right) ^{3} \left( a_{{1} }-a_{{3}} \right)  \left( a_{{2}}-a_{{3}}
\right) a_{{1}}a_{{2}}a_{{3} } \left( a_{{1}}-1 \right)  \left( a_{{2}}-1 \right)  \left( a_{{3}}-1
 \right) ^{3},\\
 &{\theta_{{2}}}^{6} = c_2 \left( a_{{1}}-a_{{2}} \right) ^{3}
 \left( a_{{1}}-a_{{3}} \right)  \left( a_{{2}}-a_{{3}} \right) a_{{1}
}a_{{2}}{a_{{3}}}^{3} \left( a_{{1}}-1 \right)  \left( a_{{2}}-1
 \right)  \left( a_{{3}}-1 \right), \\
 &{\theta_{{3}}}^{6} =c_3  \left( a_{{1}} -a_{{2}} \right) ^{3} \left( a_{{1}}-a_{{3}} \right)  \left( a_{{2}}-a _{{3}}
\right) a_{{1}}a_{{2}}a_{{3}} \left(
a_{{1}}-1 \right)  \left( a_{{2}}-1 \right)  \left( a_{{3}}-1 \right), \\
&{\theta_{{4}}}^{6} = c_4 \left( a_{{1}}-a_{{2}} \right)  \left( a_{{1}}-a_{{3}} \right) ^{3} \left( a_{{2}}-a_{{3}}
\right)
a_{{1}}a_{{2}}a_{{3}} \left( a_{{1}}-1 \right)  \left( a_{{2}}-1 \right) ^{3} \left( a_{{3}}-1 \right), \\
&{ \theta_{{5}}}^{6} = c_5 \left( a_{{1}}-a_{{2}} \right)  \left( a_{{1}}-a_{{ 3}} \right) ^{3} \left( a_{{2}}-a_{{3}}
\right) a_{{1}}{a_{{2}}}^{3}a_ {{3}} \left( a_{{1}}-1 \right)  \left( a_{{2}}-1 \right)  \left( a_{{3 }}-1 \right),\\
&{\theta_{{6}}}^{6} = c_6 \left( a_{{1}}-a_{{2}} \right) \left( a_{{1}}-a_{{3}} \right) ^{3} \left( a_{{2}}-a_{{3}}
\right)
a_ {{1}}a_{{2}}a_{{3}} \left( a_{{1}}-1 \right)  \left( a_{{2}}-1 \right)  \left( a_{{3}}-1 \right),\\
  \end{split}
 \]

%
%\begin{center}
%\begin{table}[h]\label{tab_4}
%\caption{Some genus 4 non hyperelliptic  cyclic curves and their automorphisms} \centering
%\begin{tabular}{||c|c|c|c||}
%
%\begin{center}
\begin{table}[ht]
\caption{Partitions of $\{1,2,3,4,5,6\}$ into 3 sets}
\begin{tabular}{||c|c||}
\hline \hline
Theta constant & Corresponding partition\\
\hline \hline
$\T_1$ & $[1,2],[3,4],[5,6]$\\
$\T_2$ & $[1,2],[3,5],[4,6]$\\
$\T_3$ & $[1,2],[3,6],[4,5]$\\
$\T_4$ & $[1,3],[2,4],[5,6]$\\
$\T_5$ & $[1,3],[2,5],[4,6]$\\
$\T_6$ & $[1,3],[2,6],[4,5]$\\
$\T_7$ & $[1,4],[2,3],[5,6]$\\
$\T_8$ &   $[1,4],[2,5],[3,6]$\\
$\T_9$ &  $[1,4],[2,6],[3,5]$\\
$\T_{10}$ &  $[1,5],[2,3],[4,6]$\\
$\T_{11}$ &  $[1,5],[2 ,4],[3,6]$\\
$\T_{12}$ &  $[1,5],[2,6],[3,4]$\\
$\T_{13}$ &  $[1,6],[2,3],[4,5]$\\
$\T_{14}$ & $[1,6],[2,4],[3,5]$\\
$\T_{15}$ &  $[1,6],[2,5],[3,4]$\\
\hline
\end{tabular}
\end{table}
%\end{center}
%
%
\[
\begin{split}
&{\theta_{{7}}}^{6} = c_7 \left( a_{{1}} -a_{{2}} \right)  \left( a_{{1}}-a_{{3}} \right)  \left( a_{{2}}-a_{{3 }}
\right)
^{3}a_{{1}}a_{{2}}a_{{3}} \left( a_{{1}}-1 \right) ^{3} \left( a_{{2}}-1 \right)  \left( a_{{3}}-1 \right),\\
&{\theta_{{8}}}^{6 } = c_8 \left( a_{{1}}-a_{{2}} \right)  \left( a_{{1}}-a_{{3}} \right) \left( a_{{2}}-a_{{3}}
\right)
a_{{1}}{a_{{2}}}^{3}a_{{3}} \left( a_{ {1}}-1 \right) ^{3} \left( a_{{2}}-1 \right)  \left( a_{{3}}-1 \right),\\
&{\theta_{{9}}}^{6} = c_9 \left( a_{{1}}-a_{{2}} \right)  \left( a _{{1}}-a_{{3}} \right)  \left( a_{{2}}-a_{{3}}
\right)
a_{{1}}a_{{2}}{ a_{{3}}}^{3} \left( a_{{1}}-1 \right) ^{3} \left( a_{{2}}-1 \right) \left( a_{{3}}-1 \right),\\
&{\theta_{{10}}}^{6} = c_{10} \left( a_{{1}}-a_{{2}} \right)  \left( a_{{1}}-a_{{3}} \right)  \left( a_{{2}}-a_{{3}}
 \right) ^{3}{a_{{1}}}^{3}a_{{2}}a_{{3}} \left( a_{{1}}-1 \right) \left( a_{{2}}-1 \right)  \left( a_{{3}}-1 \right), \\
 &{\theta_{{11}}}^{6} = c_{11} \left( a_{{1}}-a_{{2}} \right)  \left( a_{{1}}-a_{{3}} \right) \left( a_{{2}}-a_{{3}} \right)
  {a_{{1}}}^{3}a_{{2}}a_{{3}} \left( a_{{1}}-1 \right)  \left( a_{{2}}-1 \right) ^{3} \left( a_{{3}}-1 \right),\\
&{\theta_{{12}}}^{6} =  c_{12} \left( a_{{1}}-a_{{2}} \right)  \left( a_{{1}}-a_{{3}} \right)  \left( a_{{2}}-a_{{3}}
\right) {a_{{1}}}^{3}a _{{2}}a_{{3}} \left( a_{{1}}-1 \right)  \left( a_{{2}}-1 \right) \left( a_{{3}}-1
\right)^{3},\\
&{\theta_{{13}}}^{6} = c_{13} \left( a_{{1}}-a_{ {2}} \right)  \left( a_{{1}}-a_{{3}} \right)  \left( a_{{2}}-a_{{3}}
\right)
^{3}a_{{1}}a_{{2}}a_{{3}} \left( a_{{1}}-1 \right)  \left( a_ {{2}}-1 \right)  \left( a_{{3}}-1 \right),\\
&{\theta_{{14}}}^{6} = c_{14} \left( a_{{1}}-a_{{2}} \right)  \left( a_{{1}}-a_{{3}} \right) \left( a_{{2}}-a_{{3}}
\right)
a_{{1}}a_{{2}}{a_{{3}}}^{3} \left( a_{ {1}}-1 \right)  \left( a_{{2}}-1 \right) ^{3} \left( a_{{3}}-1 \right),\\
&{\theta_{{15}}}^{6} = c_{15} \left( a_{{1}}-a_{{2}} \right)  \left( a_{{1}}-a_{{3}} \right)  \left( a_{{2}}-a_{{3}}
\right) a_{{1}}{a_{{2} }}^{3}a_{{3}} \left( a_{{1}}-1 \right)  \left( a_{{2}}-1 \right) \left( a_{{3}}-1 \right) ^{3}
\end{split}
\]

where $c_i$'s are constants and depend on the partition $\Lambda_i.$ From the above set of equations we can write
$a_1,a_2,a_3$ in terms of theta constants:
\begin{equation}\label{case1_g4}
a_1 ^2 = \delta_1(\frac{\T_{10}}{\T_{13}})^6,  \quad \quad a_2 ^2 = \delta_2(\frac{\T_{5}}{\T_{6}})^6,   \quad \quad
a_3^2 = \delta_3 (\frac{\T_{2}}{\T_{3}})^6
\end{equation}
where $\delta_1 = \frac{c_{10}}{c_{13}}, \quad \quad \delta_2 =\frac{c_{5}}{c_{6}}, \quad \quad \delta_3 =
\frac{c_{2}}{c_{3}}.$

Using the result of case 1 we can write the equations of cases 2 and case 4 in terms of thetanulls.

\begin{figure}
   \begin{center}
        \includegraphics[width=1.2\textwidth]{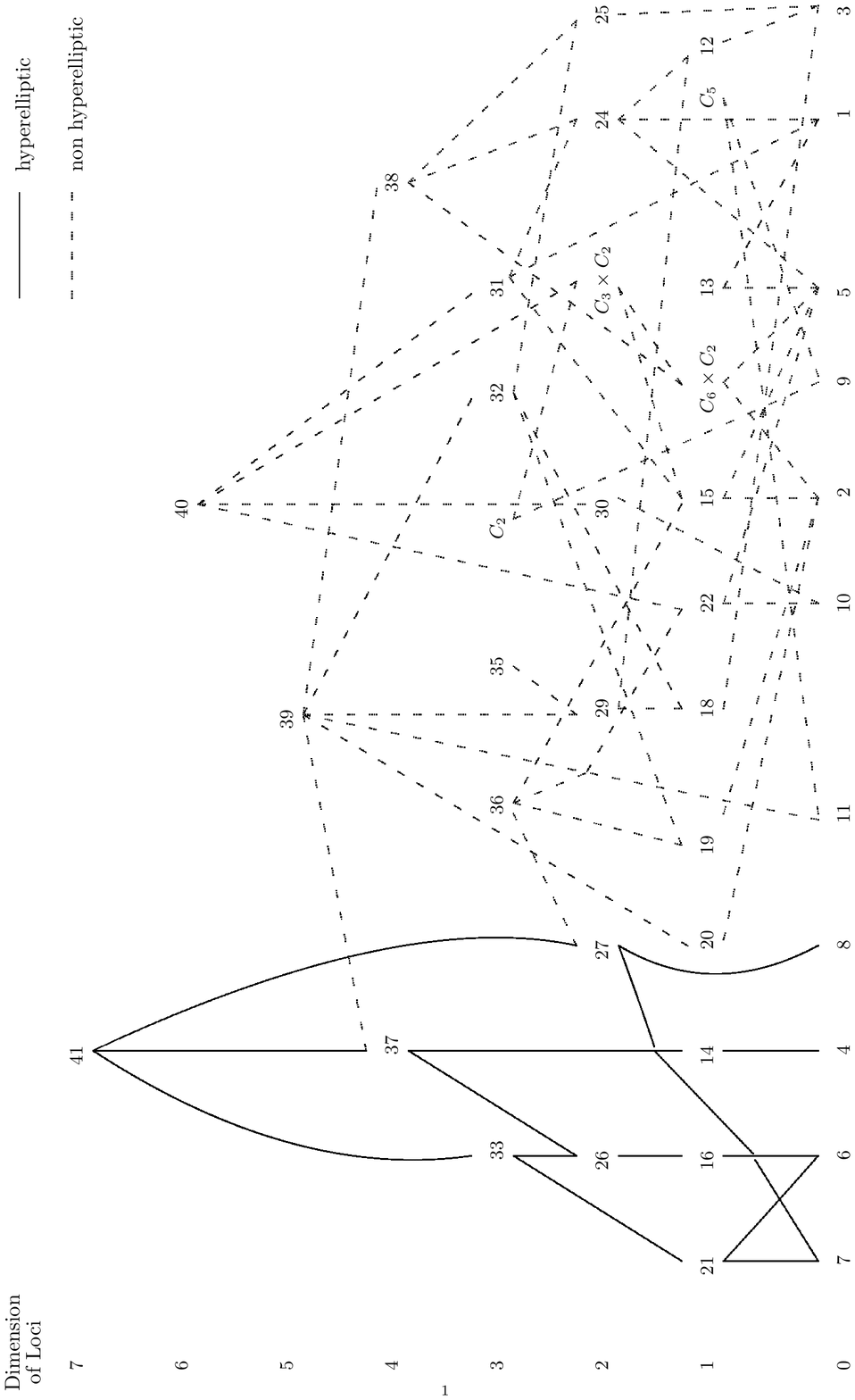}
       \caption{Inclusions among the loci for genus 4 curves.}
      \label{figg=4}
   \end{center}
\end{figure}
\clearpage
%
%\clearpage
%%%%%%%%%%%%%%%%%%%%%%%%%%%%%%%%%%%
%%%%%%%%%%%%%%%%%%%%%%%%%%%%%%%%%%%%%%%%Figure code start here %%%%%%%%%%%%%%%%%%%%%%%%%%%%%%%%%%%%%%%%%%%
%\begin{figure}%[h]
%\begin{center}
%\setlength\textwidth{40pc}

\noindent\textbf{Case 2:} In this case the curve can be written as
\begin{equation}\label{case3}
y^3 = (x-1)(x+1)(x - \sqrt{\alpha_1})(x + \sqrt{\alpha_1})(x- \sqrt{\alpha_2})(x+ \sqrt{\alpha_2}).
\end{equation}
Consider the transformation given by $$ x \longrightarrow \frac{x-1}{2x-1}.$$
Under this transformation we obtain a curve that is isomorphic to the given curve and

\noindent the new curve is given by the equation $$y^3 = x(x-\frac{2}{3})(x - \gamma_1)(x - \gamma_2)(x - \gamma_3)(x -
\gamma_4)$$
where $\gamma_1 = \frac{\sqrt{\alpha_1} -1 }{2\sqrt{\alpha_1} -1 },$ $\gamma_2 = \frac{-\sqrt{\alpha_1} -1
}{-2\sqrt{\alpha_1} -1 },$  $\gamma_3 = \frac{\sqrt{\alpha_2} -1 }{2\sqrt{\alpha_2} -1 }$, and $\gamma_4 =
\frac{-\sqrt{\alpha_2} -1 }{-2\sqrt{\alpha_2} -1 }.$  Using this transformation we map the branch point 1 of the curve
given by the Eq.~\eqref{case3} to 0.
Again by using the transformation $$x \longrightarrow  \frac {-2x +1 } {3x -2}, $$ we can find another curve isomorphic
to the above two curves. This transformation maps $\frac{2}{3}$ to $\infty.$ With this transformation the curve is
given by the equation
$$y^3 = x(x - \delta_1)(x - \delta_2)(x - \delta_3)(x-\delta_4)$$
where  $\delta_i = \frac{-2\gamma_i + 1}{3 \gamma_i-2}.$ By using the transformation given by  $$x \longrightarrow
\frac {x+1} {\frac{\delta_1}{\delta_1+1}x+ \frac{2\delta_1 +1}{\delta_1+1}},$$
we can find the curve $$y^3 = x(x-1)(x - \beta_1)(x - \beta_2)(x - \beta_3)$$
 where $\beta_i =\frac{(\delta_1+1)(\delta_{i+1}+1)}{\delta_1 \delta_{i+1} +2 \delta_1 +1},$ which is isomorphic to the previous 3
algebraic curves. Now we are in case 1. From the result of case 1, we can write the $\beta_i,$ $i=1,2,3$ as ratios of
thetanulls. But we like to have $\alpha_1$ and $\alpha_2$ as functions of theta constants. Notice that we have the
following 3 relations on $\alpha_1,$ $\alpha_2,$ $\beta_1,$ $\beta_2,$ and $\beta_3$:

\[
\begin{split}
\beta_1 & = \frac{\alpha_1}{\alpha_1 -2 -2(\sqrt{\alpha_1} -1)},\\
\beta_2 & = \frac{\sqrt{\alpha_1 \alpha_2}} {\sqrt{\alpha_1 \alpha_2} + \sqrt{\alpha_1} - \sqrt{\alpha_2}}, \\
\end{split}
\]
\[
\begin{split}
\beta_3 & = \frac{\sqrt{\alpha_1 \alpha_2}} {\sqrt{\alpha_1 \alpha_2} - \sqrt{\alpha_1} - \sqrt{\alpha_2}}. \\
\end{split}
\]
Using these relations, $\alpha_1$ and $\alpha_2$ can be written as rational functions of $\beta_1,$ $\beta_2,$ and
$\beta_3$ given by the following:
\begin{equation}
\begin{split}
{\alpha_1} &= \frac { 2\, \beta_{{1}}\beta_{ {2}}\left( -\beta_{{3}}+\beta_{{2}} \right)
}{2\,\beta_{{1}}\beta_{{3}}+2\,\beta_{{1}}\beta_{{2}}+{\beta_{{2}}
}^{2}\beta_{{3}}-6\,\beta_{{1}}\beta_{{2}}\beta_{{3}}-2\,\beta_{{1}}{
\beta_{{2}}}^{2}+3\,\beta_{{1}}{\beta_{{2}}}^{2}\beta_{{3}}},\\
%%%%%%%%%%
{\alpha_2}& = {\frac {2\,\beta_{{1}}(\beta_{{3}}-\beta_{{2}})}{-4\,\beta
_{{1}}-\beta_{{2}}\beta_{{3}}+4\,\beta_{{1}}\beta_{{3}}+4\,\beta_{{1}}
\beta_{{2}}-3\,\beta_{{1}}\beta_{{2}}\beta_{{3}}}},
\end{split}
\end{equation}

\noindent with the condition of $\beta_1, \beta_2$ and $\beta_3$
\[
\begin{split}
( \beta_{{1}}{\beta_{{3}}}^{2}+2\,\beta_{{1}}\beta_{{2}
}\beta_{{3}}+\beta_{{1}}{\beta_{{2}}}^{2}+{\beta_{{2}}}^{2}{\beta_{{3}
}}^{2}-4\,\beta_{{1}}\beta_{{2}}{\beta_{{3}}}^{2}-4\,\beta_{{1}}{\beta
_{{2}}}^{2}\beta_{{3}}+3\,\beta_{{1}}{\beta_{{2}}}^{2}{\beta_{{3}}}^{2 } ) \\
( -\beta_{{3}}-\beta_{{2}}+2\,\beta_{{2}}\beta_{{3}}
 ) &=0.
\end{split}
\]
The branch points of the curve given by Eq.~\eqref{case3} can be expressed as ratios of theta constants by using all of
the above information.

\noindent\textbf{Case 4:} In this case the curve is given by
\begin{equation}\label{case1}
y^3 = (x^2-1)(x^4 - \alpha x^2 +1).
\end{equation}
This is a special case of case 2.  By writing out the equation of case 2, we have $y^3 = (x^2-1)(x^4 - (\alpha_1
+\alpha_2) x^2 + \alpha_1 \alpha_2).$ Take $\alpha = \alpha_1 +\alpha_2 $ and $\alpha_1 \alpha_2 = 1.$

\noindent\textbf{Case 3:} In this case, the equation is given by $y^5 = x(x-1)(x-\alpha).$ This curve has 4 ramifying
points $Q_i$ where $i= 1..4$ in the fiber of $0,1,t,\infty$ respectively. The meromorpic function $f = x$ on $\X$ of
order 4 has $(f) = 4 Q_1 - 4 Q_4.$ By applying Lemma~\ref{Shiga} with

\noindent $P_0 = Q_4$ and an effective divisor $4Q_2 + Q_3,$ we have the following:

\begin{equation} \label{case2-1}
E \alpha = \prod_{k=1}^5  \frac{\T( 4\int_{Q_4}^{Q_2} \om  + \int_{Q_4}^{Q_3} \om - \int_{Q_4}^{b_k} \om - \triangle ,
\tau )} {\T( 4\int_{Q_4}^{Q_2} \om  + \int_{Q_4}^{Q_3} \om  - \triangle , \tau )}.
\end{equation}
Again by applying Lemma~\ref{Shiga} with an effective divisor $3Q_2 + 2Q_3,$ we have the following:

\begin{equation} \label{case2-2}
E \alpha^2 = \prod_{k=1}^5  \frac{\T(3 \int_{Q_4}^{Q_2} \om  + 2\int_{Q_4}^{Q_3} \om - \int_{Q_4}^{b_k} \om - \triangle
, \tau )} {\T( 3\int_{Q_4}^{Q_2} \om  + 2\int_{Q_4}^{Q_3} \om  - \triangle , \tau )}.
\end{equation}
We have the following by dividing Eq.~\eqref{case2-2} by Eq.~\eqref{case2-1}:
\begin{equation}\label{case3_g4}
\begin{split}
\alpha =& \prod_{k=1}^5  \frac{\T(3 \int_{Q_4}^{Q_2} \om  + 2\int_{Q_4}^{Q_3} \om - \int_{Q_4}^{b_k} \om -
\triangle , \tau )} {\T( 3\int_{Q_4}^{Q_2} \om  + 2\int_{Q_4}^{Q_3} \om  - \triangle , \tau )}   \\
   & \times \prod_{k=1}^5 \frac{\T( 4\int_{Q_4}^{Q_2} \om  + \int_{Q_4}^{Q_3} \om  - \triangle , \tau )}  {\T( 4\int_{Q_4}^{Q_2} \om  + \int_{Q_4}^{Q_3} \om - \int_{Q_4}^{b_k} \om - \triangle
, \tau )}.
\end{split}
\end{equation}
By calculating integrals on the right-hand side in terms of thetanulls, we can write the branch point $\alpha$ as a
ratio of thetanulls. Summarizing all of the above, we have
\begin{thm}
\begin{description}
\item [i)] If $\Aut(\X) \cong C_3,$ then $\X$ is isomorphic to a curve with equation $$y^3 =
x(x-1)(x-a_1)(x-a_2)(x-a_3),$$ where $a_1$, $a_2,$ and $a_3$  are given in case (1)  in terms of thetanulls.
\item [ii)] If $\Aut(\X) \cong C_3 \times C_2,$ then $\X$ is isomorphic to a curve with equation $$y^3 = (x^2-1)(x^2 - \alpha_1)(x^2-
\alpha_2),$$ where $\a_1,$ and $\a_2$ are given in case (2) in terms of thetanulls.
\item [iii)] If $\Aut(\X) \cong C_5,$ then $\X$ is isomorphic to a curve with equation $$y^5 = x(x-1)(x-\alpha),$$ where
$\a$ is given in case (4) in terms of thetanulls.
\item [iv)] If $\Aut(\X) \cong C_6 \times C_2,$ then $\X$ is isomorphic to a curve with equation $$y^3 = (x^2-1)(x^4 - \alpha x^2
+1),$$ where $\a$ is given in case (3) in terms of thetanulls.
\end{description}
\end{thm}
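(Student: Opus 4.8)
The plan is to read this theorem as a consolidation of the four explicit inversions carried out in the preceding \emph{Inverting the Moduli Map} subsection, indexed by the automorphism group. For each part I would first fix the normal form furnished by the classification of genus~$4$ cyclic curves recorded in Table~\ref{tab_4} (see \cite{SH-KM}): part~(i) is entry~1, part~(ii) is entry~2, part~(iii) is entry~3, and part~(iv) is entry~4. The substance to be established in each case is twofold — that any $\X$ with the prescribed $\Aut(\X)$ is isomorphic to a curve in the stated shape, which is exactly what the classification provides, and that the free parameters of that shape are expressible through the thetanulls, which is the content imported from Cases~1--4 above.

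For the $y^3$ families, namely parts~(i), (ii), and~(iv), I would invoke the generalized Thomae formula of \cite{NK}, i.e.\ Case~1 of Theorem~\ref{CyclicCurvesThm}. Applied to $y^3=x(x-1)(x-a_1)(x-a_2)(x-a_3)$ through the fifteen pairings of $\{1,\dots,6\}$ tabulated above, it yields the fifteen product relations $\T_i^6=c_i\prod(\cdots)$; dividing the matched pairs isolates $a_1^2,a_2^2,a_3^2$ as in Eq.~\eqref{case1_g4}, after which one fixes the sign-consistent root (roots differing by sign give isomorphic curves, as in the genus~$2$ Picard argument). Parts~(ii) and~(iv) then reduce to part~(i): I would chain the Möbius substitutions exhibited in the text, beginning with $x\mapsto\frac{x-1}{2x-1}$, to carry Eq.~\eqref{case3} into the Case~1 form $y^3=x(x-1)(x-\beta_1)(x-\beta_2)(x-\beta_3)$, and then invert the displayed rational dependence of $(\alpha_1,\alpha_2)$ on $(\beta_1,\beta_2,\beta_3)$; part~(iv), Eq.~\eqref{case1}, is the specialization $\alpha_1\alpha_2=1$, $\alpha=\alpha_1+\alpha_2$, of this computation. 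Here the only remaining care is the bookkeeping of the constants $\delta_i=c_i/c_j$ and the compatibility condition on the $\beta_i$.

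For part~(iii), the $C_5$ curve $y^5=x(x-1)(x-\alpha)$ is \emph{not} of the $n\mid k$ Thomae type, so I would instead apply Lemma~\ref{Shiga} to $f=x$ with $P_0=Q_4$ and the two effective divisors $4Q_2+Q_3$ and $3Q_2+2Q_3$; taking the quotient of the two resulting expressions presents $\alpha$ as the theta quotient Eq.~\eqref{case3_g4}. The main obstacle is precisely the step flagged throughout the paper in the remark following the Algorithm: evaluating the abelian integrals $\int_{Q_4}^{Q_i}\om$ occurring in Eq.~\eqref{case3_g4} explicitly in terms of half-integer characteristics, so that $\alpha$ becomes a genuine rational function of thetanulls rather than a formal theta quotient. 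For the $y^3$ families this difficulty is sidestepped, since \cite{NK} already delivers the thetanulls as closed products in the branch points; but for the $C_5$ family it is left to further work, and part~(iii) should be read as reducing $\alpha$ to the explicit theta quotient of Eq.~\eqref{case3_g4} in the spirit of \cite{SHI}.
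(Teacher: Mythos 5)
Your proposal follows the paper's own proof essentially verbatim: part (i) via the generalized Thomae formula of \cite{NK} applied to the fifteen pair-partitions of $\{1,\dots,6\}$ and the ratios in Eq.~\eqref{case1_g4}, part (ii) via the chain of M\"obius substitutions reducing Eq.~\eqref{case3} to the Case~1 form and inverting the rational dependence of $(\alpha_1,\alpha_2)$ on $(\beta_1,\beta_2,\beta_3)$, part (iv) as the specialization $\alpha=\alpha_1+\alpha_2$, $\alpha_1\alpha_2=1$, and part (iii) via Lemma~\ref{Shiga} with $P_0=Q_4$ and the divisors $4Q_2+Q_3$ and $3Q_2+2Q_3$, with the abelian integrals in Eq.~\eqref{case3_g4} left unevaluated exactly as the paper leaves them. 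The only discrepancy is the paper's own labeling: its theorem statement cites ``case (4)'' for the $C_5$ curve and ``case (3)'' for the $C_6\times C_2$ curve, whereas in the text's numbering these are Cases 3 and 4 respectively, and your mapping is the mathematically correct one.
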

\section{Concluding Remarks}
In Sections 2, 3, and 4, the main idea was  to write down the branch points as quotients of thetanulls explicitly for
cyclic curves of genus 2, 3, and 4 with extra automorphisms.  For hyperelliptic algebraic curves, we can use Thomae's
formula to express branch points as ratios of thetanulls. We used Maple 10 for all computations. For non-hyperelliptic
cyclic curves, we used various methods in order to invert the period map. The method described in Lemma ~\ref{Shiga} in
Chapter 1 gives the general method to find branch points in terms of thetanulls. The main drawback of this method is
the difficulty of writing complex integrals as functions of theta characteristics. Some of the results in Chapter 2 and
Chapter 3 already appeared in \cite{Previato}.

%\newpage


\begin{thebibliography}{00}

\bibitem{NK} {\sc A.Nakayashiki}, On the \uppercase{T}homae formula for \uppercase{$Z\sb N$} curves.
 {\em Publications of the Research Institute for Mathematical Sciences}, 33(6):987--1015, 1997.

\bibitem{Baker} {\sc H.F. Baker}, {\em Abelian Function, Abel's theorem and the allied theory of theta
  functions}, Cambridge University Press, 1897.

\bibitem{Enolski} {\sc V.Z. Enolski and T.~Grava}, Thomae type formulae for singular \uppercase{$Z\sb N$} curves,{\em Letters in Mathematical
 Physics}, 76:187--214, 2006.

\bibitem{Farkas} {\sc H.M. Farkas}, Generalizations of the $\lambda$ function, {\em Proceedings of the Hirzebruch 65 Conference on Algebraic
  Geometry}, 9:231--239, 1996.

\bibitem{Ca} {\sc J.~Quer G.~Cardona}, Field of moduli and field of definition for curves of genus 2, computational aspects of algebraic
curves, {\em Lecture Notes Series on Computing}, 13:71--83, 2005.

\bibitem{Sh} http://www.albmath.org/algcurves/, Algebraic curves and their applications.

\bibitem{S2} {\sc D.~Sevilla J.~Gutierrez and T.~Shaska}, Hyperelliptic curves of genus 3 with prescribed automorphism
group,{\em Lecture Notes in Computer Science}, 13:109--123, 2005.

\bibitem{Krazer} {\sc A.~Krazer}, Lehrbuch der thetafunctionen, {\em Chelsea}, 1970.

\bibitem{SH-KM} {\sc K.~Magaard and T.~Shaska}, Equations of genus 4 algebraic curves with automorphisms, {\em work in progress}.

\bibitem{MS} {\sc K.~Magaard, T.~Shaska, S.~Shpectorov, and H.~V{\"o}lklein}, The locus of curves with prescribed automorphism
group, {\em S\=urikaisekikenky\=usho K\=oky\=uroku}, (1267):112--141, 2002, Communications in arithmetic fundamental
groups (Kyoto, 1999/2001).

\bibitem{Mu1} {\sc David Mumford}, {\em Tata lectures on theta. {II}}, volume~43 of {\em Progress in
  Mathematics}, Birkh\"auser Boston Inc., Boston, MA, 1984, Jacobian theta functions and differential equations, With the
  collaboration of C. Musili, M. Nori, E. Previato, M. Stillman and H. Umemura.

\bibitem{Mu2} {\sc David Mumford}, {\em Tata lectures on theta. {II}}, Modern Birkh\"auser Classics. Birkh\"auser Boston Inc., Boston, MA,
  2007, Jacobian theta functions and differential equations, With the collaboration of C. Musili, M. Nori, E. Previato, M. Stillman and H. Umemura,
  Reprint of the 1984 original.

\bibitem{gaudry} {\sc P.Gaudry}, Fast genus 2 arithmetic based on theta functions, {\em Journal of Mathematical Cryptology}, 1(3):243--265, 2007.

\bibitem{picard} {\sc E.~Picard}, Sur des fonctions de deux variables ind\'ependantes analogues aux
  fonctions modulaires, {\em Acta Mathematica}, 2(1):114--135, 1883.

\bibitem{Previato} {\sc E.~Previato, T.~Shaska, and G.~S. Wijesiri}, Thetanulls of cyclic curves of small genus, {\em Albanian Journal
of Mathematics}, 1(4):265--282, 2007.

\bibitem{RF} {\sc H.E. Rauch and H.M.Farkas}, Theta functions with applications to \uppercase{R}iemann surfaces, {\em Williams and Wilkins}, 1974.

\bibitem{S7} {\sc T.~Shaska and H.~V\"olklein}, Elliptic subfields and automorphisms of genus two fields, {\em Algebra, Arithmetic and Geometry
with Applications.}, pages  687--707, 2004.

\bibitem {web}
{\it Algebraic curves and their applications}
\newline
{\tt http://www.albmath.org/algcurves/}

\bibitem{SHI} {\sc H.~Shiga}, On the representation of the \uppercase{P}icard modular function by
  $\theta$ constants. i, ii, {\em Publications of the Research Institute for Mathematical
  Sciences}, 24(3):311--360, 1988.

\end{thebibliography}
\end{document}